\theoremstyle{plain}
\newtheorem{theorem}{Theorem}[section]
\newtheorem{lemma}[theorem]{Lemma}
\newtheorem{proposition}[theorem]{Proposition}
\newtheorem{corollary}[theorem]{Corollary}
\newtheorem{claim}[theorem]{Claim}
\def\qed{\ifvmode\mbox{ }\else\unskip\fi\hskip 1em plus 10fill$\Box$}
\theoremstyle{definition}
\newtheorem{definition}{Definition}[section]
\newtheorem{remark}[definition]{Remark}
\newcommand{\e}{\varepsilon}
\newcommand{\R}{\mathbb{R}}
\newcommand{\conv}{\mathbf{\mathrm{conv}}}
\renewcommand{\le}{\leqslant}
\renewcommand{\ge}{\geqslant}
\renewcommand{\leq}{\leqslant}
\renewcommand{\geq}{\geqslant}
\renewcommand{\epsilon}{\varepsilon}
\renewcommand{\approx}{\asymp}
\newcommand{\eqdef}{\stackrel{\mathrm{def}}{=}}
\begin{document}
\title{Overlap properties of geometric expanders}
\author{Jacob Fox}
\author{Mikhail Gromov}
\author{Vincent Lafforgue}
\author{Assaf Naor}
\author{J\'anos Pach}

\date{}

\maketitle

\begin{abstract} The {\em overlap number} of a finite $(d+1)$-uniform hypergraph $H$ is defined as the largest constant $c(H)\in (0,1]$ such that no matter how we map the vertices of $H$ into $\R^d$, there is a point covered by at least a $c(H)$-fraction  of the simplices induced by the images of its hyperedges. In~\cite{Gro2}, motivated by the search for an analogue of the notion of graph expansion for higher dimensional simplicial complexes, it was asked whether or not there exists a sequence $\{H_n\}_{n=1}^\infty$ of arbitrarily large $(d+1)$-uniform hypergraphs with bounded degree, for which $\inf_{n\ge 1} c(H_n)>0$.
Using both random methods and explicit constructions, we answer this question positively by constructing infinite families of $(d+1)$-uniform hypergraphs with bounded degree such that their overlap numbers are bounded from below by a positive constant $c=c(d)$. We also show that, for every $d$, the best value of the constant $c=c(d)$ that can be achieved by such a construction is asymptotically equal to the limit of the overlap numbers of the complete $(d+1)$-uniform hypergraphs with $n$ vertices, as $n\rightarrow\infty$.
For the proof of the latter statement, we establish the following geometric partitioning result of independent interest. For any $d$ and any $\epsilon>0$, there exists $K=K(\epsilon,d)\ge d+1$ satisfying the following condition. For any $k\ge K$, for any point $q \in \mathbb{R}^d$ and for any finite Borel measure $\mu$ on $\mathbb{R}^d$ with respect to which every hyperplane has measure $0$, there is a partition $\mathbb{R}^d=A_1 \cup \ldots \cup A_{k}$ into $k$ measurable parts of equal measure such that
all but at most an $\epsilon$-fraction of the $(d+1)$-tuples $A_{i_1},\ldots,A_{i_{d+1}}$ have the property that either all simplices with one vertex in each
$A_{i_j}$ contain $q$ or none of these simplices contain $q$.
\end{abstract}



\tableofcontents

\section{Introduction}

Let $G=(V,E)$ be an $n$-vertex  graph. Think of $G$ as a $1$-dimensional simplicial complex, i.e., each edge is present in $G$ as an actual interval. Assume that for every subset $S\subseteq V$ of size $\left\lfloor \frac{n}{2}\right\rfloor$ the number of edges joining $S$ and $V\setminus S$ is at least $\alpha|E|$, for $\alpha\in (0,1]$. It follows that for every $f:V\to \R$, if we extend $f$ to be a linear (or even just continuous) function defined also on the edges of $G$, there must necessarily exist a point $x\in \R$ such that $|f^{-1}(x)|\ge \alpha |E|$. Indeed, $x$ can be chosen to be a median of the set $f(V)\subseteq \R$. In other words, no matter how we draw $G$ on the line, its edges will heavily overlap.


As  illustrated by this simple example, the above expander-like condition\footnote{It isn't quite edge expansion since we do not care about boundaries of small sets.} on $G$  implies that all of its embeddings in $\R$ satisfy a geometric overlap condition. This condition naturally extends to higher-dimensional simplicial complexes, and can thus serve as a potential definition of a higher-dimensional analogue of edge expansion\footnote{To be precise, what we are detecting here is only that $G$ contains a large expander, rather than being an expander itself.}. Such investigations of high-dimensional geometric analogues of edge expansion were initiated by Gromov in~\cite{Gro2}. The present paper follows this approach.

In 1984, answering a question of K\'arteszi, two undergraduates at E\"otv\"os University, Boros and F\"uredi \cite{BF}, proved the following theorem.
\begin{theorem}[\cite{BF}]\label{BorosFuredi} For every set
$P$ of $n$ points in the plane, there is a point (not necessarily in
$P$) that belongs to at least $\left(\frac{2}{9}-o(1)\right){n\choose
3}$ closed triangles induced by the elements of $P$.
\end{theorem}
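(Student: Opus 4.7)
The plan is to combine two ingredients: the planar centerpoint theorem (a consequence of Helly's theorem), and an extremal inequality for the angular distribution of the $n$ points as seen from the centerpoint.

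First, I would apply the centerpoint theorem to locate a point $q \in \R^2$ such that every closed halfplane containing $q$ meets $P$ in at least $\lceil n/3\rceil$ points; this is a direct consequence of Helly's theorem in the plane applied to the family of closed halfplanes each containing more than $2n/3$ points of $P$. After an arbitrarily small perturbation I may assume that $q$ is in general position with respect to $P$: $q\notin P$ and no two points of $P$ are collinear with $q$.

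Next, for each $p_i\in P$ let $\theta_i\in[0,2\pi)$ be the angle of the ray from $q$ to $p_i$. Under the general position assumption, the closed triangle on $\{p_i,p_j,p_k\}$ contains $q$ if and only if the three angles $\theta_i,\theta_j,\theta_k$ do not all lie in a common semicircle of $S^1$, i.e.\ the three arcs into which they partition $S^1$ each have length strictly less than $\pi$. The centerpoint property on $q$ translates exactly into the condition that every open semicircle of $S^1$ contains at most $\lfloor 2n/3\rfloor$ of the angles $\theta_i$. It therefore suffices to show: for any $n$ points on $S^1$ whose angular distribution satisfies this semicircle constraint, the number of triples whose three arcs are each shorter than $\pi$ is at least $\bigl(\tfrac{2}{9}-o(1)\bigr)\binom{n}{3}$.

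For this final step, the extremal configuration consists of three balanced clusters of $n/3$ angles at $0$, $2\pi/3$, and $4\pi/3$: the $(n/3)^3=\tfrac{2}{9}\binom{n}{3}+o(n^3)$ triples that take one vertex from each cluster are precisely the good triples. I would prove optimality by passing to the continuous setting and considering Borel probability measures $\mu$ on $S^1$ satisfying $\mu(I)\leq 2/3$ for every open semicircle $I$. Define $F(\mu)=\mu^{\otimes 3}\bigl(\{(\alpha_1,\alpha_2,\alpha_3):\text{no semicircle contains all three}\}\bigr)$. One shows that $F$ attains its minimum $2/9$ under this constraint at the uniform measure on three atoms $2\pi/3$ apart, via a mass-transport/smoothing argument: starting from an arbitrary admissible $\mu$, one successively redistributes mass (preserving the semicircle constraint) so as to concentrate it on a three-atom extremizer while not increasing $F$, and one then plugs in the empirical distribution of $\theta_1,\ldots,\theta_n$.

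The main obstacle will be this extremal inequality. The naive pointwise approach---bounding $d(p_i)\leq 2n/3$, where $d(p_i)$ counts the other points lying in the open CCW semicircle starting at $\theta_i$, and then applying convexity of $\binom{x}{2}$ to $\sum_i\binom{d(p_i)}{2}$ subject only to $\sum_i d(p_i)=\binom{n}{2}$---yields only a constant of order $\tfrac{1}{6}$, which is too weak. Reaching the sharp constant $\tfrac{2}{9}$ requires invoking the centerpoint condition globally, as a constraint on the full angular distribution rather than pointwise; the measure-theoretic reformulation above is the cleanest way to capture this extra information.
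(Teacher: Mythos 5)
Your reduction in the first three paragraphs is fine: a centerpoint $q$ exists, a triangle contains $q$ iff the three angular arcs determined by its vertices (as seen from $q$) are each shorter than $\pi$, and the centerpoint property is exactly the statement that every open semicircle carries at most $2/3$ of the angular mass. The fatal problem is that the extremal inequality you reduce to in step 4 is \emph{false}: the semicircle constraint alone does not force a $\frac{2}{9}$ fraction of good triples, and three balanced clusters are not the minimizer. Take $\mu$ to be the uniform probability measure on the arc $[0,\tfrac{3\pi}{2}]\subseteq S^1$ (density $\tfrac{2}{3\pi}$ there, zero elsewhere). Every closed semicircle meets this arc in a set of length at least $\tfrac{\pi}{2}$, so every closed semicircle has mass at least $\tfrac13$ and every open one at most $\tfrac23$; the constraint holds, i.e.\ the center is a centerpoint of a point set realizing this angular distribution. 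Yet for three i.i.d.\ points the "good" event is $\{v-u>\tfrac{2\pi}{3\cdot(3/2)}\cdot\tfrac{3}{2}\}$ --- concretely, after rescaling the arc to $[0,1]$ so that $\pi$ corresponds to $\tfrac23$, the triple is good iff the range of the three points exceeds $\tfrac23$ while consecutive gaps stay below $\tfrac23$, and a direct computation gives
$$\Pr[\text{good}]=6\int_{2/3}^{1}(1-s)\Bigl(\tfrac43-s\Bigr)\,ds=\frac{5}{27}<\frac{2}{9}.$$
(Equivalently, $\Pr[\text{bad}]=\Pr[\mathrm{range}\le\tfrac23]+2\Pr[\mathrm{mid}-\mathrm{min}\ge\tfrac23]=\tfrac{20}{27}+\tfrac{2}{27}=\tfrac{22}{27}$.) So there are point sets with a centerpoint lying in only $\bigl(\tfrac{5}{27}+o(1)\bigr)\binom{n}{3}$ triangles, and no smoothing argument can rescue a lemma that is simply not true.

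This is precisely why neither the paper nor Bukh argues from the centerpoint theorem. The paper's topological proof produces a point $q$ with the stronger "no exposed ray" property --- every ray from $q$ crosses at least $n^2/9$ \emph{edges} of the complete geometric graph, a condition on pairs rather than on single halfplanes --- and the $\tfrac{2}{9}$ count then falls out by summing over the rays opposite to each vertex. Bukh instead uses Ceder's lemma (three concurrent lines cutting the plane into six sectors of $n/6$ points each), again strictly stronger than the centerpoint property. Your approach, carried through, yields at best a Boros--F\"uredi theorem with a worse constant (the trivial convexity bound you mention gives $\tfrac16$, and the true optimum of your angular extremal problem is $\tfrac{5}{27}$, attained by the three-quarter arc above), not the sharp $\tfrac{2}{9}$ the statement asserts.
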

The factor $\frac29$ in Theorem~\ref{BorosFuredi} is asymptotically tight, as shown by Bukh, Matou\v{s}ek and Nivasch in~\cite{BN}. A short and elegant ``book proof" of Theorem~\ref{BorosFuredi} was given by Bukh~\cite{Bu}. In Section~\ref{top}, we present an alternative ``topological" argument.

The theorem of Boros and F\"uredi has been generalized to higher dimensions. B\'ar\'any~\cite{Ba} proved that for every $d\in \mathbb N$ there exists a constant $c_d>0$ such that given any set $P$ of $n$ points in $\mathbb{R}^d$, one can always find a point in
at least $c_dn^d$ closed simplices whose vertices belong to $P$. In
fact, the following stronger statement due to Pach~\cite{Pa} holds true.

\begin{theorem}\label{pach} (\cite{Pa}) Every set $P$ of $n$ points in ${\mathbb{R}^d}$ has $d+1$ disjoint $\lfloor c'_dn\rfloor$-element subsets,
$P_1,\ldots,P_{d+1}$, such that {\em all} closed simplices with
one vertex from each $P_i$ have a point in common. Here $c'_d>0$
is a constant depending only on the dimension $d$.
\end{theorem}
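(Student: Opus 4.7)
The plan is to combine Bárány's First Selection Lemma (the higher-dimensional extension of Theorem~\ref{BorosFuredi} cited immediately before the statement) with the \emph{Same-Type Lemma} of Bárány and Valtr, strengthened by a reference-point refinement. Recall the Same-Type Lemma: for every $d$ and every integer $k$ there exists $\gamma(k,d)>0$ such that any sufficiently generic $n$-point set in $\R^d$ contains disjoint blocks $Y_1, \ldots, Y_k$, each of size at least $\gamma(k,d)\, n$, with the property that every transversal $(y_1, \ldots, y_k) \in Y_1 \times \cdots \times Y_k$ has the same order type; that is, the sign of each determinant $\det[y_{j_1}-y_{j_0}, \ldots, y_{j_d}-y_{j_0}]$ depends only on the index pattern, not on the transversal chosen.

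The main argument then runs in three steps. First, select $k = k(d)$ large enough that $c_d \binom{k}{d+1} \geq 1$, where $c_d$ is Bárány's constant, and apply the Same-Type Lemma to $P$ to obtain $Y_1, \ldots, Y_k$ with $|Y_i| \geq \gamma(k,d)\, n$. Second, pick one representative $p_i^* \in Y_i$ and form $P^* := \{p_1^*, \ldots, p_k^*\}$; applying Bárány's First Selection Lemma to $P^*$ produces a point $q \in \R^d$ that lies in at least one witness simplex $\sigma^* = \conv\{p_{i_1}^*, \ldots, p_{i_{d+1}}^*\}$. Third, I transfer the containment from $\sigma^*$ to every colored transversal $\conv\{y_{i_1}, \ldots, y_{i_{d+1}}\}$ with $y_{i_j} \in Y_{i_j}$, which gives the desired subsets $P_j := Y_{i_j}$ with $|P_j| \geq c'_d\, n$ for $c'_d := \gamma(k(d), d)$.

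The main obstacle is this third step: the standard Same-Type Lemma fixes orientations only among transversal tuples, while the condition $q \in \conv\{y_{i_1}, \ldots, y_{i_{d+1}}\}$ is governed by the signs of $d+1$ determinants involving the \emph{external} point $q$. I would resolve this by refining Step one after the fact to include $q$ in the ambient order type. Concretely, once $q$ has been located in the second step, repartition $P$ by running the same-type construction on the enlarged hyperplane arrangement consisting of the original same-type-defining hyperplanes together with the $O_d(1)$ hyperplanes through $q$ and any $d-1$ of the representatives; since only a constant number of extra hyperplanes is added, the resulting blocks remain of linear size, and now every determinant involving $q$ and $d$ points drawn transversally has constant sign. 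The sign pattern that certified $q \in \sigma^*$ therefore persists for every colored transversal simplex, concluding the proof.
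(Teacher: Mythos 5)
There is a genuine gap in your third step, and it is exactly the point where the whole difficulty of the theorem lives. First, the mechanism you describe does not control the right determinants. Cutting the blocks by the hyperplanes spanned by $q$ and $(d-1)$-tuples of the \emph{representatives} only fixes the sign of orientations of the form $\det[\,y-q,\ p_{j_1}^*-q,\ldots,p_{j_{d-1}}^*-q\,]$, i.e. determinants in which a single transversal point moves. But whether $q\in \conv\{y_{i_1},\ldots,y_{i_{d+1}}\}$ is governed by the $d+1$ determinants involving $q$ together with $d$ \emph{moving} transversal points, and no finite arrangement of hyperplanes through $q$ and the representatives pins those signs down. (The Same-Type Lemma itself is not obtained by cutting along a fixed hyperplane arrangement; it is an iterated ham-sandwich argument, so "adding $O_d(1)$ hyperplanes to the arrangement" is not an operation available to you.) Second, even if you replace your refinement by the correct tool --- applying the Same-Type Lemma to the $k+1$ classes $Y_1,\ldots,Y_k,\{q\}$, which does make every $(d+1)$-tuple of refined blocks homogeneous with respect to $q$ --- you run into a circularity: the refinement shrinks the blocks, your single witness simplex $\sigma^*$ need not survive into the refined blocks, and homogeneity only tells you that each tuple is "all contain $q$" or "none contain $q$". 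Nothing in your argument rules out that every tuple ends up of the second kind. One witness is simply not enough to survive a positive-fraction refinement.

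The paper breaks this circularity differently, and it is worth seeing why its order of operations works. It fixes $q$ \emph{first}, by applying B\'ar\'any's theorem to the full set $P$, so that $q$ lies in a $c(d)$-fraction of \emph{all} $\binom{n}{d+1}$ simplices; it then builds the partition relative to this fixed $q$ via Corollary~\ref{discretestructure} (whose proof runs the iterated ham-sandwich argument of Lemma~\ref{imptlemma} with every bisecting hyperplane passing through $q$, feeding into the measure-hypergraph regularity lemma, Theorem~\ref{thm:regularity}). With $\epsilon\ll c(d)$, the simplices containing $q$ are too numerous to be absorbed by the $\epsilon$-fraction of non-homogeneous tuples plus the $O(1/k)$-fraction of degenerate tuples, so some homogeneous tuple must contain a witness and hence consists entirely of simplices containing $q$. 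If you want to salvage your same-type route, you need the same two ingredients in the same order: a point $q$ guaranteed to lie in a positive fraction of all simplices of $P$ (not just in one simplex of representatives), and a homogenization with respect to $q$ whose losses are quantitatively smaller than that fraction.
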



Recall that a hypergraph $H=(V,E)$ consists of a set $V$ and a set $E$ of non-empty subsets of $V$. The elements of $V$ are called vertices and the elements of $E$ are called hyperedges. $H$ is {\em $d$-uniform} if every hyperedge $e\in E$ contains exactly $d$ vertices. The {\em degree} of a vertex $v\in V$ in $H$ is the number of hyperedges containing $v$. To simplify the presentation, we introduce the following terminology.

\begin{definition}\label{overlap} Given a $(d+1)$-uniform hypergraph $H=(V,E)$, its {\em overlap number} $c(H)$ is the largest constant $c\in (0,1]$ such that for every embedding $f:V\to \R^d$, there exists a point $p\in \R^d$ which belongs to at least $c|E|$ simplices whose vertex sets are hyperedges of $H$, i.e., there exists a set of hyperedges $S\subseteq E$ with $|S|\ge c|E|$ and $p\in \bigcap_{e\in S} \conv(f(e))$ (where $\conv(A)$ denotes the convex hull of $A\subseteq R^d$). An infinite family $\mathscr H$ of $(d+1)$-uniform hypergraphs is {\em highly overlapping} if there exists an absolute constant $c>0$ such that $c(H)>c$ for every $H\in \mathscr H$. An infinite family of {\em $d$-dimensional simplicial complexes} is called {\em highly overlapping} if the family of $(d+1)$-uniform hypergraphs consisting of the vertex sets of their $d$-dimensional faces (their $d$-skeletons) is highly overlapping\footnote{Gromov~\cite{Gro2} calls such simplicial complexes ``polyhedra with large cardinalities."}.
\end{definition}

Using this terminology, the Boros-F\"uredi theorem states that the family of all finite complete $3$-uniform hypergraphs (or $2$-skeletons of all complete simplicial complexes) is highly overlapping. B\'ar\'any's theorem says that the same is true for the family of complete $(d+1)$-uniform hypergraphs (or $d$-skeletons of complete simplicial complexes). The fact that the family of all finite complete graphs ($1$-skeletons of complete simplicial complexes) is highly overlapping (with $c=1/2$) is trivial, but its higher dimensional generalizations are much more subtle.

\medskip

It was a simple but very important graph-theoretic discovery by Pinsker~\cite{Pi} and others that there exist arbitrarily large edge {\em expanders} of bounded degree \cite{HLW}. As we have seen at the beginning of this paper, expanders with a fixed rate of expansion are necessarily highly overlapping. This fact motivated Gromov's question~\cite{Gro2} whether there exist infinite families of higher dimensional simplicial complexes with bounded degree that are highly overlapping. In other words, Gromov's question~\cite{Gro2} for $2$-dimensional simplicial complexes asks whether a Boros-F\"uredi type theorem remains true if instead of {\em all} triangles determined by $n$ points in the plane, we consider only ``sparse" systems of triangles. In particular, do there exist arbitrarily large $3$-uniform hypergraphs $H$, in which every vertex belongs to at most a constant number $k$ of triples, and whose overlap numbers are bounded from below by an absolute positive constant?

In Section~\ref{2/9}, we answer this question in the affirmative, by proving the following result.

\begin{theorem}\label{29result}
For any $\e>0$, there exists a positive integer $k=k(\e)$ satisfying
the following condition. There is an infinite sequence of
$3$-uniform hypergraphs $H_n$ with $n$ vertices and $n$ tending to infinity, each of degree $k$,
such that, for any embedding of the vertex set $V(H_n)$ in
$\mathbb{R}^2$, there is a point belonging to at least a
$(\frac{2}{9}-\e)$-fraction of all closed triangles induced by
images of hyperedges of $H_n$. Here the constant $\frac{2}{9}$
cannot be improved.
\end{theorem}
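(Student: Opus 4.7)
The plan is to prove the lower bound $c(H_n) \geq 2/9 - \varepsilon$ by a randomized bounded-degree construction whose analysis passes through the geometric partitioning result stated in the abstract, and to prove the matching upper bound by randomly embedding $H_n$ into a Bukh-Matou\v{s}ek-Nivasch~\cite{BN} configuration.

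For the lower bound, fix $\varepsilon > 0$. The partitioning result (applied with $d=2$ and error $\varepsilon/4$ to the empirical measure on the image of an embedding) provides a constant $K = K(\varepsilon)$ such that for every embedding $f : V \to \R^2$ of an $n$-element set $V$ in general position, and every point $q \in \R^2$, there is a partition of $V$ into $K$ classes of nearly equal size with the property that all but an $\varepsilon/4$-fraction of ordered class-triples $(A_{i_1}, A_{i_2}, A_{i_3})$ are \emph{$q$-homogeneous}: either every cross-triple $v_1 v_2 v_3$ with $v_j \in A_{i_j}$ has $q$ in its convex hull, or none does. The decisive point is that this collapses the combinatorial data of an arbitrary embedding to a partition of $V(H_n)$ into $K$ classes --- of which there are only $K^n$ in total --- rather than to an order type, of which there are $2^{O(n \log n)}$.

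Let $H_n$ be the random $3$-uniform hypergraph on $n$ vertices obtained by sampling $m = kn/3$ triples from $\binom{V(H_n)}{3}$ uniformly and then discarding any hyperedges incident to vertices of unusually high degree; here $k = k(\varepsilon)$ is a sufficiently large constant. For any fixed partition $V(H_n) = A_1 \cup \cdots \cup A_K$ into near-equal classes, a Chernoff bound shows that the number of hyperedges of $H_n$ falling into each class-triple concentrates around its mean with failure probability $\exp(-\Omega(\varepsilon^2 k n))$; a union bound over the $K^n$ partitions succeeds provided $k \geq C \varepsilon^{-2} \log K$, which is constant in $n$. With positive probability over $H_n$, the resulting bounded-degree hypergraph has the property that its hyperedges are almost uniformly spread across class-triples, simultaneously for every partition of $V(H_n)$ into $K$ near-equal classes. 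For any embedding $f$, Theorem~\ref{BorosFuredi} supplies a point $q(f)$ lying in a $(2/9 - \varepsilon/4)$-fraction of the $\binom{n}{3}$ triples of $f(V(H_n))$; the partitioning theorem applied at $q(f)$ then yields a partition into which almost all $q(f)$-covering triples are accounted for by $q(f)$-homogeneous class-triples, and the uniform spread of $H_n$ propagates the $(2/9-\varepsilon)$ density to the hyperedges of $H_n$. The step I expect to require the most care is interfacing the partitioning theorem with the random construction --- in particular, ensuring that the discarding of high-degree vertices does not destroy the uniform-spread property --- but this is routine since only an $o(1)$ fraction of hyperedges need be removed.

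For the tightness of $2/9$, let $H_n$ be any $3$-uniform hypergraph of degree $k$ on $n$ vertices. I would take $f$ to be a uniformly random bijection of $V(H_n)$ onto an $n$-point Bukh-Matou\v{s}ek-Nivasch configuration, on which every $q \in \R^2$ lies in at most $(2/9 + o(1)) \binom{n}{3}$ triangles. For $q$ ranging over the $O(n^4)$ cells of the line arrangement through $f(V(H_n))$, the expected number of hyperedges of $H_n$ whose $f$-image contains $q$ is at most $(2/9 + o(1))|E(H_n)|$, and McDiarmid's inequality on random permutations --- with each transposition affecting only $O(k)$ hyperedges --- produces concentration within $\varepsilon|E(H_n)|$ of this mean with failure probability $\exp(-\Omega(\varepsilon^2 n))$. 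This beats the polynomial union bound over cells, so with positive probability the random embedding realizes overlap at most $2/9 + \varepsilon$, confirming that the constant $2/9$ cannot be improved.
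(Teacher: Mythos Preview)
Your proposal is essentially correct, but it takes a genuinely different route from the paper's two direct proofs of this theorem. Both of the paper's proofs work entirely in the plane via Ceder's lemma: one finds three concurrent lines through a point $p$ cutting the image into six equal angular sectors $S_1,\ldots,S_6$, and uses Bukh's observation that any transversal sextuple spans at least $8$ out of $20$ triangles through $p$. The first proof builds $H_n$ from $t$ random partitions of $[n]$ into blocks of constant size $b$, taking all triples within each block; the second builds $H_n$ from a quadrilateral-free Ramanujan graph $G$, declaring $\{i,j,\ell\}$ a hyperedge when $i,j,\ell$ share a common $G$-neighbor. In each case the expander mixing lemma (or its random-partition analogue) forces most blocks/neighborhoods to meet each $S_i$ in roughly $1/6$ of their points, and Bukh's count then gives the $2/9-\varepsilon$ fraction.

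Your argument instead invokes the heavy partitioning theorem (Theorem~\ref{mainstructure}/Corollary~\ref{discretestructure}) as a black box, and your random hypergraph only needs to be uniformly spread over all equipartitions into $K$ classes. This is precisely the mechanism the paper uses later to prove the general Theorem~\ref{usestructure} (optimal overlap $c(d)-\varepsilon$ in every dimension), so your proof is really the specialization of that argument to $d=2$. The trade-off: the paper's direct proofs are far more elementary---Ceder's lemma is a one-page ham-sandwich argument---whereas your route requires the Szemer\'edi-style density-increment machinery of Section~\ref{szemeredi} just to establish the partitioning result. On the other hand, your approach generalizes verbatim to all $d$, while the $8$-out-of-$20$ counting is special to the plane. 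One small caveat: apply the discrete Corollary~\ref{discretestructure} rather than Theorem~\ref{mainstructure} directly, since the empirical measure on a finite point set has atoms; and to get exact $k$-regularity you should sample from the uniform regular model rather than sample-and-prune (the paper does this in Section~\ref{optimal}).

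For the tightness of $2/9$, your argument via a random bijection onto a Bukh--Matou\v{s}ek--Nivasch configuration, McDiarmid/Azuma with Lipschitz constant $O(k)$, and a union bound over the $O(n^4)$ cells is exactly the paper's proof of Theorem~\ref{tightconstruction} specialized to $d=2$.
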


We also generalize Theorem~\ref{29result} to $(d+1)$-uniform hypergraphs with $d\geq 2$.

\begin{theorem}\label{answer}
For every integer $d\ge 2$, there exist positive constants $c_d$ and $k_d$ with the following property. There is an infinite sequence of $(d+1)$-uniform hypergraphs $H_n$ with $n$ vertices and $n$ tending to infinity, each of degree $k_d$, such that, for any embedding of the vertex set $V(H_n)$ in $\mathbb{R}^d$, there is a point in $\mathbb{R}^d$ that belongs to at least a $c_d$-fraction of all closed simplices induced by images of hyperedges of $H_n$.
\end{theorem}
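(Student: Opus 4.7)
The plan is to prove Theorem \ref{answer} by a probabilistic construction combined with Pach's partition theorem (Theorem \ref{pach}). The guiding intuition is that a sparse random $(d+1)$-uniform hypergraph on $n$ vertices is almost surely a ``sparsifier'' of the complete hypergraph with respect to the overlap property, provided the expected degree $k_d$ is chosen large enough that a Chernoff tail defeats the union bound over the relevant combinatorial configurations.

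Concretely, I would fix $d \geq 2$, let $c'_d > 0$ denote the constant from Theorem \ref{pach}, choose a parameter $k$ to be calibrated later, and consider the random $(d+1)$-uniform hypergraph $H_n^{(k)}$ on vertex set $[n]$ in which each of the $\binom{n}{d+1}$ possible hyperedges is included independently with probability $p = k/\binom{n-1}{d}$. Standard Chernoff estimates show that with probability $1-o(1)$ the edge count satisfies $|E(H_n^{(k)})| = (1+o(1))\frac{kn}{d+1}$ and at most $o(n)$ vertices have degree exceeding $2k$; deleting those vertices produces a hypergraph $H_n$ on $(1-o(1))n$ vertices of maximum degree at most $2k$, without affecting the overlap fraction in the limit.

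The heart of the argument is a uniform concentration estimate for ``transversal'' edges. For any ordered tuple $\mathcal{P} = (P_1, \ldots, P_{d+1})$ of pairwise disjoint subsets of $[n]$ with $|P_i| \geq c'_d n$, let $T(\mathcal{P})$ denote the family of $(d+1)$-subsets $\{v_1, \ldots, v_{d+1}\}$ with $v_i \in P_i$, so that $|T(\mathcal{P})| \geq (c'_d n)^{d+1}$. The number $X_{\mathcal{P}}$ of hyperedges of $H_n^{(k)}$ lying in $T(\mathcal{P})$ has expectation at least $p(c'_d n)^{d+1} = \Theta_d(kn)$, and a Chernoff bound yields
\[
\Pr\!\left[X_{\mathcal{P}} < \tfrac{1}{2}\, p\, |T(\mathcal{P})|\right] \leq \exp(-\gamma_d\, k n)
\]
for some $\gamma_d > 0$ depending only on $d$. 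There are at most $(d+2)^n$ admissible ordered tuples, since each vertex is labeled by an element of $\{1, \ldots, d+1\} \cup \{\text{none}\}$. Choosing $k = k_d$ large enough that $\gamma_d k_d > 2 \log(d+2)$ and applying a union bound, with probability $1-o(1)$ the inequality $X_{\mathcal{P}} \geq \tfrac{1}{2}\, p\, |T(\mathcal{P})|$ holds simultaneously for every admissible $\mathcal{P}$.

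On this high-probability event, for any map $f : V(H_n) \to \mathbb{R}^d$ Theorem \ref{pach} (applied after an infinitesimal perturbation of coincident points, if necessary) supplies disjoint subsets $P_1, \ldots, P_{d+1} \subset V(H_n)$ with $|P_i| \geq c'_d (1-o(1)) n$ and a point $q \in \mathbb{R}^d$ lying in every transversal simplex $\conv(f(v_1), \ldots, f(v_{d+1}))$ with $v_i \in P_i$. The concentration step then furnishes at least $\tfrac{1}{2}\, p (c'_d n)^{d+1}$ hyperedges of $H_n$ whose simplices all contain $q$, and dividing by $|E(H_n)| \leq (1+o(1))\, p \binom{n}{d+1}$ yields a lower bound $c(H_n) \geq c_d$ for an explicit constant $c_d = c_d(d) > 0$ independent of $n$ and $f$, establishing the theorem with the degree bound $k_d$ fixed above. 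The \emph{main obstacle} is calibrating $k_d$ so that the Chernoff tail defeats the $(d+2)^n$ union bound: this is precisely what forces the degree to depend on $d$ but, crucially, keeps it independent of $n$. A minor technical point is controlling the slight loss in Pach's constant from the degree trimming, which is absorbed into the $o(1)$ error terms.
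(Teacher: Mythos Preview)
Your approach---direct Erd\H{o}s--R\'enyi sparsification combined with a Chernoff--union bound over the $(d+2)^n$ vertex labelings---is different from both arguments in the paper. The first proof (Section~\ref{sec:answer}) builds $H_n$ from $t$ random equipartitions of $[n]$ into blocks of fixed size $b$, taking as hyperedges all $(d+1)$-subsets inside a block; this yields an exactly $t\binom{b-1}{d}$-regular hypergraph, and concentration is proved block by block via Lemma~\ref{lemmafor29} rather than edge by edge. The second proof (Section~\ref{sec:high dim}) is deterministic: hyperedges are walks of length $d$ in a bounded-degree expander, and the expander mixing lemma replaces Chernoff. Your route is arguably the most direct and lands on the same order of constant $c_d \asymp (d+1)!(c'_d)^{d+1}$; the paper's constructions buy exact regularity (and, in the expander case, explicitness).

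There is, however, a genuine slip in your degree-trimming step. For fixed $k$ and $n\to\infty$, the number of vertices of degree exceeding $2k$ is \emph{not} $o(n)$: each degree is approximately $\mathrm{Poisson}(k)$, so a fixed proportion $\alpha(k)=\Pr[\mathrm{Poisson}(k)>2k]>0$ of the vertices land in $B$, giving $|B|=\Theta(n)$. The repair is easy but must be made explicit. Choose $k$ large so that $\alpha(k)$ is tiny; bound $|B|\le 2\alpha(k)n$ by Markov (probability $\ge 1/2$ is enough for existence); and separately bound the number of edges incident to $B$ by Markov applied to $\sum_v \deg(v)\mathbf{1}_{\deg(v)>2k}$, whose expectation is $n\,\mathbb{E}[D\mathbf{1}_{D>2k}]=nk\Pr[\mathrm{Poisson}(k)\ge 2k]=O(nk\,e^{-ck})$. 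Delete these edges (rather than the vertices): the resulting hypergraph has maximum degree $2k$, still $(1-o(1))\frac{kn}{d+1}$ edges, and for every admissible $\mathcal{P}$ the transversal count drops by at most $O(nk\,e^{-ck})\ll \frac12 p(c'_d n)^{d+1}=\Theta_d(kn)$. With this adjustment your argument goes through, though note that you obtain a hypergraph of \emph{maximum} degree $2k$ rather than a regular one as in the paper.
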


Among the most natural and powerful methods to construct good expanders is the use of certain Cayley graphs of finitely generated groups (see~\cite{LPS88,Mar88,DSV03}), via arguments related to Kazhdan's property (T) (see~\cite{BHV08}). Such graphs yield explicit constructions of expanders that have extremal spectral properties, namely Ramanujan graphs~\cite{LPS88}. Being Cayley graphs of finitely generated groups, these constructions can be viewed as quotients of trees (Cayley graphs of free groups). It is natural to study hypergraph versions of this type of construction, based on quotients buildings (a type of higher dimensional simplicial complexes that extends the notion of a tree~\cite{steger}). In particular, a notion of Ramanujan complex, which is a simplicial complex with extremal spectral properties analogous to Ramanujan graphs, was introduced and constructed in~\cite{Bal00,CSZ03,Li04,LSV05,LSV05-exp,Sar07}. Here we show that such constructions can yield highly overlapping bounded degree hypergraph families. Specifically, we show that for every integer $r\ge 2$, for a large enough odd prime power $q$, certain finite quotients of the building of $PGL_r(F)$, where $F$ is a non-archimedian local field with residue field of order $q$, are highly overlapping $r$-uniform hypergraphs (with degree and overlap number depending only $q,r$). Rather than defining the relevant notions in the introduction, we refer to Section~\ref{sec:building} for precise definitions and statements. Instead, we state below the following concrete special case of our result, which follows from  our argument in Section~\ref{sec:building}, in combination with a construction of Lubotzky, Samuels and Vishne~\cite{LSV05-exp}.

\begin{theorem}\label{thm:PGL} For every odd prime $p$ and every integer $r\ge 3$ there exist $k(p,r)\in \mathbb N$ and $c(p,r)>0$ with the following property. For every $m\in \mathbb N$, the finite group $G=PGL_r(\mathbb F_{p^m})$, where $\mathbb F_{p^m}$ is the field of cardinality $p^m$, has a symmetric generating set $S\subseteq G$ of size bounded above by $k(p,r)$, such that the following holds. Consider the $r$-regular hypergraph $H$ whose vertex set is $G$ and whose hyperedges are those $r$-tuples $\{g_1,\ldots,g_r\}\subseteq G$ with $g_ig_j^{-1}\in S$ for all distinct $1\le i,j\le r$ (i.e., $H$ is the hypergraph consisting of all cliques of size $r$ in the Cayley graph induced by $S$). Then  there exist arbitrarily large integers $m$ for which the hypergraph $H$ has overlap number at least $c(p,r)>0$.
\end{theorem}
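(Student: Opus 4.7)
The plan is to derive Theorem~\ref{thm:PGL} as an immediate consequence of the general overlap result for quotients of the Bruhat--Tits building of $PGL_r$ (to be established in Section~\ref{sec:building}), combined with the explicit arithmetic construction of Lubotzky, Samuels and Vishne~\cite{LSV05-exp}. First, take $F=\mathbb{F}_p((t))$, a non-archimedian local field with ring of integers $\mathcal{O}=\mathbb{F}_p[[t]]$, uniformizer $\pi=t$, and residue field $\mathbb{F}_p$, so that the parameter $q$ appearing in the building set-up equals $p$. Let $\mathcal{B}$ denote the Bruhat--Tits building of $PGL_r(F)$: an $(r-1)$-dimensional simplicial complex whose vertices are homothety classes of $\mathcal{O}$-lattices in $F^r$, and whose chambers are the maximal chains $[L_0]>[L_1]>\cdots>[L_{r-1}]>[\pi L_0]$ of lattice classes. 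The degree of a vertex in $\mathcal{B}$ is the Gaussian sum $\sum_{j=1}^{r-1}\binom{r}{j}_p$, a quantity depending only on $p$ and $r$.

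Next, invoke \cite{LSV05-exp}: there is a cocompact arithmetic lattice $\Gamma<PGL_r(F)$ with a descending family of finite-index congruence subgroups $\{\Gamma_m\}$ such that $\Gamma/\Gamma_m\cong PGL_r(\mathbb{F}_{p^m})$ for infinitely many $m$, and such that for all sufficiently large such $m$ the subgroup $\Gamma_m$ acts freely on $\mathcal{B}$. Fix a base vertex $v_0\in\mathcal{B}$, and identify $PGL_r(\mathbb{F}_{p^m})\cong\Gamma/\Gamma_m$ with the $\Gamma_m$-orbit of $v_0$ in $\mathcal{B}$ via the standard transitivity argument (strong approximation combined with the transitivity of $\Gamma$ on vertices of a given type). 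Under this identification the neighbors of $v_0$ in $\mathcal{B}$ descend to a symmetric generating set $S\subseteq PGL_r(\mathbb{F}_{p^m})$ whose cardinality is bounded by the vertex degree of $\mathcal{B}$; set $k(p,r):=\sum_{j=1}^{r-1}\binom{r}{j}_p$. The $1$-skeleton of the finite quotient complex $X_m:=\Gamma_m\backslash\mathcal{B}$ is then isomorphic to the Cayley graph of $PGL_r(\mathbb{F}_{p^m})$ with respect to $S$.

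The key combinatorial observation is that the building of $PGL_r(F)$ is a \emph{flag complex}: an $r$-tuple of vertices spans a chamber if and only if the vertices are pairwise adjacent in the $1$-skeleton. When $\Gamma_m$ acts freely on $\mathcal{B}$ (in fact freely on stars of vertices), this flag property descends to the finite quotient $X_m$. Consequently, the $r$-cliques of the Cayley graph of $PGL_r(\mathbb{F}_{p^m})$ with generators $S$ are in bijection with the chambers of $X_m$; that is, the hypergraph $H$ of the statement is exactly the hypergraph of chambers of $X_m$. Applying the general overlap theorem of Section~\ref{sec:building} to the finite quotients $X_m$ (with the parameter $q$ specialised to $p$) produces a constant $c(p,r)>0$ lower bound on the overlap number of $H$, valid for all the infinitely many $m$ supplied by \cite{LSV05-exp}.

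The main technical obstacle is the flag/freeness check underlying the clique-chamber bijection: for small $m$ some $\gamma\in\Gamma$ could identify two distinct vertices within a single chamber of $\mathcal{B}$, creating spurious cliques in $X_m$ that are not images of chambers and breaking the identification between $H$ and the chamber hypergraph of $X_m$. Ruling this out requires $\Gamma_m$ to act freely not merely on vertices but on the link of every vertex, which follows from the residually finite arithmetic structure of $\Gamma$ in \cite{LSV05-exp} once $m$ is large enough. With this in hand, all the hypotheses of the building overlap result of Section~\ref{sec:building} are verified and Theorem~\ref{thm:PGL} follows.
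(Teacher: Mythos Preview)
Your approach is essentially the paper's own: both proofs specialise to $F=\mathbb F_p((t))$, invoke the overlap criterion of Section~\ref{sec:building} (Corollary~\ref{cor-alpha} with $I=\mathbb Z/r\mathbb Z$) for finite quotients of the building, and appeal to the Lubotzky--Samuels--Vishne construction~\cite{LSV05-exp} both for the clique-complex/Cayley-graph identification and for the injectivity-radius check guaranteeing that cliques coincide with chambers. One technical point you should make explicit: Corollary~\ref{cor-alpha}, as set up in Section~\ref{sec:building}, requires the lattice to lie in $G_0$ so that the type map descends to the finite quotient and the sets $X_{\{i\}}$ are well defined; the paper singles this out (together with condition~(C), which is your ``freeness on links'') as the two hypotheses to be verified, and cites Corollary~6.8 of~\cite{LSV05-exp} for both.
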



By Theorem~\ref{29result}, the best value of the constant $c_2$ in Theorem~\ref{answer} is close to $\frac29$, but in higher dimensions $d>2$, we do not have very good estimates for $c_d$. Our goal is to show, roughly speaking, that the best constant in Theorem~\ref{answer} is the same as the best constant in the Boros-F\"uredi-B\'ar\'any theorem (Theorem~\ref{BorosFuredi}). To state this formally, it will be convenient to introduce some notation. Let $c(K^{d+1}_n)$ be the overlap number of $K^{d+1}_n$, the complete $(d+1)$-uniform hypergraph on $n$ vertices, and set
$$c(d)=\lim_{n \to \infty} c\left(K^{d+1}_n\right).$$
It is easy to show, via  a straightforward point duplication argument, that the limit defining $c(d)$ exists, and the Boros-F\"uredi-B\'ar\'any theorem shows that $c(d)>0$, for every $d$.

One might suspect that if $H$ is a $(d+1)$-uniform hypergraph without isolated vertices, then $c(H) \leq c(d)+o(1)$, where the $o(1)$ term goes to $0$ as the number of vertices of $H$ tends to infinity. This is not the case. Consider, for example, the $(d+1)$-hypergraph $H^{d+1}_n$ on $n$ vertices, whose hyperedges are those sets of size $d+1$ that contain the first $d$ vertices. In any general position embedding of the vertices of $H^{d+1}_n$ in $\mathbb{R}^d$, any segment joining a pair of points sufficiently close and on opposite sides of the face consisting of the first $d$ vertices stabs all the simplices induced by the images of hyperedges of $H^{d+1}_n$. Hence, $c(H^{d+1}_n) \geq 1/2$. However, $c(d)$ decays to $0$ at least exponentially in $d$ (see, e.g., \cite{Ba,BN}). Despite this example, we  show that our
suspicion is {\em correct} for bounded degree hypergraphs.

\begin{theorem}\label{tightness}
For any $d$, $\Delta\in \mathbb N$, and $\epsilon>0$, there is $n(d,\Delta,\epsilon)\in \mathbb N$ such that every $(d+1)$-uniform hypergraph $H$ on $n \geq n(d,\Delta,\epsilon)$ nonisolated vertices with maximum degree $\Delta$ satisfies $c(H) \leq c(d)+\epsilon$.
\end{theorem}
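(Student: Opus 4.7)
The plan is to construct, for a bounded-degree hypergraph $H$ on sufficiently many vertices, an embedding $f: V(H) \to \R^d$ whose maximum point-coverage is at most $(c(d) + \epsilon)|E(H)|$. Since $c(d) = \lim_{k \to \infty} c(K_k^{d+1})$ (with the limit attained from above, via the point-duplication argument), I first fix $k$ large enough that $c(K_k^{d+1}) \le c(d) + \epsilon/8$ and $d^2/k \le \epsilon/32$. Let $g: [k] \to \R^d$ be a general-position embedding of $K_k^{d+1}$ realizing this bound, and, for a small parameter $r > 0$ to be chosen after fixing the combinatorial cells of the arrangement of $g$-simplex facets, let $\mu_r$ denote the probability measure putting uniform mass $1/k$ on each ball $B(g(i), r)$; this measure assigns zero mass to every hyperplane. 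I embed $V(H)$ by sampling $f(v) \sim \mu_r$ independently for each vertex $v$.

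Fix a point $q \in \R^d$ lying in general position relative to the arrangement of facets of the simplices $\conv(g(S))$. I apply the geometric partitioning theorem to $(\mu_r, q)$ with tolerance $\epsilon/16$, obtaining a partition $\R^d = A_1 \cup \cdots \cup A_k$ into parts of equal $\mu_r$-mass such that all but an $\epsilon/16$-fraction of the ordered $(d+1)$-tuples $(A_{i_1}, \ldots, A_{i_{d+1}})$ are pure with respect to $q$. Let $N(S)$ denote the number of hyperedges of $H$ with one vertex in each $A_{i_j}$ for $S = \{i_0, \ldots, i_d\} \in \binom{[k]}{d+1}$. For any choice of representatives $\widetilde{p}_i \in A_i$, a pure covering $S$ forces $q \in \conv(\widetilde{p}(S))$; averaging this indicator over $\widetilde{p}_i$ drawn independently from the conditional $\mu_r$-measure on $A_i$, and rewriting the sum over $S$ as a probability under i.i.d.\ $\mu_r$-samples, one obtains
\begin{equation*}
\#\{\text{pure covering } S\} \;\le\; \mathbb{E}_{\widetilde{p}} \#\bigl\{S : q \in \conv(\widetilde{p}(S))\bigr\} \;\le\; \bigl(c(d) + \tfrac{\epsilon}{4}\bigr)\binom{k}{d+1},
\end{equation*}
where the last inequality follows from the coverage bound on $g$ (together with the cancellation $\tfrac{k^{d+1}}{(k)_{d+1}} \cdot \tfrac{(k)_{d+1}}{k^{d+1}} = 1$) and from $r$ being small enough that i.i.d.\ $\mu_r$-samples with distinct cluster labels have the same $q$-incidence as the corresponding $g$-simplex.

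Since $H$ has maximum degree $\Delta$, changing the image of one vertex alters each $N(S)$ and the count of ``degenerate'' hyperedges (those with two vertices in a common $A_i$) by at most $\Delta$. McDiarmid's bounded-differences inequality therefore gives $N(S) \le (1 + \epsilon/32)|E(H)|/\binom{k}{d+1}$ uniformly over $S$, and caps the total contribution of non-pure plus degenerate tuples to $\mathrm{coverage}(q)$ at $(\epsilon/4)|E(H)|$, except with probability exponentially small in $n$. A union bound over the polynomially-in-$n$ combinatorial cells of the arrangement spanned by $d$-tuples of $f(V(H))$ preserves both estimates uniformly in $q$ for some realization of the random embedding, provided $n \ge n(d, \Delta, \epsilon)$. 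Adding the pieces gives $\mathrm{coverage}(q) \le (c(d) + \epsilon)|E(H)|$ for every $q$, proving $c(H) \le c(d) + \epsilon$.

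The crux of the argument is converting the partitioning theorem's purity guarantee into the sharp count of covering $(d+1)$-subsets via averaging over cluster representatives — this is precisely where the geometric partitioning theorem plays its essential role, by decoupling the ``combinatorial'' $q$-coverage from the precise positions of the $f$-images and avoiding a direct case analysis of ``small-ball'' perturbations. Secondary technical points are choosing $r$ small enough (depending on the finite set of representative $q$'s) that the $\mu_r$-iid coverage of each generic $q$ matches the discrete coverage in the skeleton $g$, and carrying out the uniform-in-$q$ concentration argument so as to absorb the $o(1)$ errors into the slack built into the parameters $k, r$, and the tolerance $\epsilon/16$ given to the partitioning theorem.
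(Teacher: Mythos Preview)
Your concentration-plus-union-bound strategy is the right shape, but there is a genuine gap at the union bound, and the detour through Theorem~\ref{mainstructure} is what creates it. The partition $A_1,\ldots,A_k$ you extract from the partitioning theorem depends on the point $q$, so the statistics $N(S)$ to which you apply McDiarmid are $q$-dependent. Yet the cells over which you propose to union-bound are the cells of the arrangement spanned by $f(V(H))$, and those are measurable with respect to the very randomness (the i.i.d.\ embedding $f$) you are averaging over. You therefore cannot fix a finite family of test points, nor a finite family of partitions, \emph{before} sampling $f$; and an inequality of the form $\Pr[\exists\,\text{cell}:\text{bad}]\le(\#\text{cells})\cdot\sup_q\Pr[\text{bad at }q]$ is simply not valid when the cells are themselves random. (Your ``rewriting as i.i.d.\ $\mu_r$-samples'' step bounding the number of pure covering tuples is correct, but it does not touch this issue.)

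The paper sidesteps the problem by fixing the geometry first and randomizing only the labelling. One takes a \emph{deterministic} $n$-point set $P\subseteq\R^d$ whose maximum point-coverage is at most $c$ for any prescribed $c>c(d)$ (obtainable by duplicating a good $k$-point configuration and perturbing), and then draws a uniformly random \emph{bijection} $f:V(H)\to P$. Now the at most $n^{d^2}$ cells of the arrangement are fixed before any randomness enters; for a representative $q$ in each cell the number of $H$-hyperedges mapped to simplices containing $q$ has mean $c_q|E(H)|$ and is a vertex-exposure martingale with increments bounded by $(2d+3)\Delta$ (Lemma~\ref{applyAzuma}), so Azuma plus a legitimate union bound over the fixed cells finishes. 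Theorem~\ref{mainstructure} is not invoked at all in this direction---it is used elsewhere in the paper, for the \emph{lower} bound Theorem~\ref{usestructure}. If you want to salvage your i.i.d.\ framework, note that $\mathbb{E}[\mathrm{coverage}(q)]=|E(H)|\cdot\Pr_{p_0,\ldots,p_d\sim\mu_r}[q\in\conv(p_0,\ldots,p_d)]$ is already bounded directly by the choice of $g$, so the partitioning theorem buys nothing; but you would still have to replace the random-cell union bound by a fixed one, and the cleanest way to do that is precisely to fix $P$ first.
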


In the other direction, we show that there are regular $(d+1)$-uniform hypergraphs $H$ of bounded degree such that $c(H)$ is at least $c(d)-\epsilon$ for any given $\epsilon>0$.

\begin{theorem}\label{usestructure}
For each $d\in \mathbb N$ and $\epsilon>0$, there is $r(d,\epsilon)\in \mathbb N$ such that for every $r \geq r(d,\epsilon)$ and sufficiently large $n$ which is a multiple of $d+1$, there is a $(d+1)$-uniform, $r$-regular hypergraph $H$ on $n$ vertices with $c(H) \geq c(d)-\epsilon$.
\end{theorem}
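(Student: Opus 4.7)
\medskip

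\noindent\textbf{Plan.} The plan is to take $H$ to be a random $(d+1)$-uniform $r$-regular hypergraph on $n$ vertices with $r=r(d,\epsilon)$ a large enough constant, and to derive $c(H)\geq c(d)-\epsilon$ by combining a pseudorandomness property of $H$ with the geometric partitioning result stated in the abstract. First choose $k=k(d,\epsilon)$ large enough that: (i) the partitioning theorem applies in dimension $d$ with error parameter $\epsilon/10$ and $k$ parts; (ii) $c(K^{d+1}_k)\geq c(d)-\epsilon/10$, which holds eventually by the definition of $c(d)$; and (iii) the fraction of ordered tuples in $[k]^{d+1}$ with a repeated coordinate is at most $\epsilon/10$. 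Then take $r\gtrsim \epsilon^{-2}k^{d+1}\log k$.

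\medskip

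\noindent\textbf{Pseudorandomness of $H$.} Sample $H$ from the configuration model for $r$-regular $(d+1)$-uniform hypergraphs, conditioned on simplicity, and write $m=|E(H)|=rn/(d+1)$. For every ordered partition $V(H)=V_1\sqcup\cdots\sqcup V_k$ and every ordered tuple $\vec i=(i_1,\dots,i_{d+1})\in[k]^{d+1}$ of distinct indices, let $N_{\vec i}$ be the number of ordered $(v_1,\dots,v_{d+1})$ with $\{v_1,\dots,v_{d+1}\}\in E(H)$ and $v_j\in V_{i_j}$; then $\mathbb{E}[N_{\vec i}]=(1+o(1))m\prod_j|V_{i_j}|/\binom{n}{d+1}$, with Chernoff/Azuma tail $\exp(-c\epsilon^2 rn/k^{d+1})$ around the expectation. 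A union bound over the at most $k^n\cdot k^{d+1}$ (partition, tuple) pairs shows that with probability $1-o(1)$ every $N_{\vec i}$ lies within a factor $(1\pm\epsilon/10)$ of its expectation; fix such an $H$.

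\medskip

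\noindent\textbf{Overlap analysis.} Let $f:V(H)\to\R^d$ be any embedding. Pick $q\in\R^d$ lying in at least $(c(d)-\epsilon/10)\binom{n}{d+1}$ of the simplices $\conv f(e)$, which exists by the definition of $c(d)$ for $n$ large. Apply the partitioning theorem to $q$ and a regularization of the empirical measure $\frac{1}{n}\sum_v\delta_{f(v)}$ obtained by spreading each atom over a tiny ball; for sufficiently small regularization the induced partition $V(H)=V_1\sqcup\cdots\sqcup V_k$ has $|V_i|=\frac{n}{k}(1+o(1))$ and, except on an $\epsilon/10$-fraction of ordered tuples $\vec i\in[k]^{d+1}$, the tuple is uniform: either every simplex with one vertex in each $V_{i_j}$ contains $q$ (call $\vec i$ \emph{good}) or none does (\emph{bad}). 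Let $T$ be the set of ordered $(d+1)$-tuples of distinct vertices whose simplex contains $q$, so $|T|\geq (d+1)!(c(d)-\epsilon/10)\binom{n}{d+1}=(1-o(1))(c(d)-\epsilon/10)n^{d+1}$, and partition $T$ by the part-tuple $\vec i$ of its vertices. A good distinct-entry $\vec i$ contributes exactly $\prod_j|V_{i_j}|$ elements of $T$, a bad one contributes $0$, a mixed distinct-entry $\vec i$ contributes at most $\prod_j|V_{i_j}|$, and any $\vec i$ with a repeated entry contributes at most $\prod_j|V_{i_j}|$. By (iii) and the partitioning bound, the sum of $\prod_j|V_{i_j}|$ over tuples that have a repeated entry or are mixed is at most $(\epsilon/5+o(1))n^{d+1}$, so rearranging yields $\sum_{\vec i\text{ good}}\prod_j|V_{i_j}|\geq (c(d)-O(\epsilon))n^{d+1}$. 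Applying pseudorandomness to sum $N_{\vec i}$ over good $\vec i$ and dividing by $(d+1)!$ to pass from ordered to unordered hyperedges produces at least $(c(d)-O(\epsilon))m$ hyperedges of $H$ that lie in good part-multisets, each of whose simplices contains $q$. Hence $c(H)\geq c(d)-O(\epsilon)$, and rescaling $\epsilon$ at the start completes the proof.

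\medskip

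\noindent\textbf{Main obstacle.} The main difficulty is establishing pseudorandomness uniformly over the $\leq k^n$ partitions of $V(H)$; to win the union bound against Chernoff tail $\exp(-c\epsilon^2 rn/k^{d+1})$, the regularity $r$ must grow polynomially in $k$, hence polynomially in $1/\epsilon$. A secondary technical point is applying the partitioning theorem to an atomic measure and translating its output into a nearly balanced partition of the vertex set, handled by the ball regularization but requiring care with the resulting error estimates.
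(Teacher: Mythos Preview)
Your proposal is correct and follows essentially the same strategy as the paper's proof in Section~\ref{optimal}: take $H$ to be a random $r$-regular $(d+1)$-uniform hypergraph with a suitable pseudorandomness property, apply the geometric partitioning result to the image of any embedding, and transfer the count of simplices containing $q$ from the complete hypergraph to $H$ via pseudorandomness.

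A few minor remarks. The paper uses Corollary~\ref{discretestructure} (the discrete version of the partitioning theorem, for finite point sets) rather than Theorem~\ref{mainstructure} plus ball regularization; this yields an exact equipartition of $P$ directly and sidesteps the technicality you flag at the end. The paper also simply asserts the required pseudorandomness (inequality~\eqref{densityineq}) for random $r$-regular hypergraphs as a standard fact, while you sketch the union bound over partitions explicitly; your route is a valid way to fill this in, though the Azuma tail in the configuration model is more naturally $\exp(-c\epsilon^2 rn/k^{2(d+1)})$ than $\exp(-c\epsilon^2 rn/k^{d+1})$, which only affects the dependence of $r(d,\epsilon)$ on $\epsilon$, not the argument. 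Finally, your condition~(ii) on $c(K^{d+1}_k)$ is never used: the point $q$ is selected using $c(K^{d+1}_n)\to c(d)$ for $n$ large, not for $k$.
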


\noindent The previous two theorems essentially show that $c(d)$ is the largest possible overlap number for bounded degree hypergraphs with sufficiently many nonisolated vertices.

\medskip

The proof of the last theorem is based on a geometric partitioning result of independent interest. A $(d+1)$-tuple of subsets $S_1,\ldots,S_{d+1} \subseteq \mathbb{R}^d$ is said to be {\em homogeneous} with respect to a point $q \in \mathbb{R}^d$ if either all simplices with one vertex in each of the sets $S_{1},\ldots,S_{d+1}$ contain $q$, or none of these simplices contain $q$.

\begin{theorem}\label{mainstructure}
For a positive integer $d$ and $\epsilon>0$, there exists another positive integer $K=K(\epsilon,d)\ge d+1$ such that for any $k \geq K$ the following statement is true. For any point $q \in \mathbb{R}^d$ and for any finite Borel measure $\mu$ on $\mathbb{R}^d$ with respect to which  every hyperplane has measure $0$, there is a partition $\mathbb{R}^d=A_1 \cup \ldots \cup A_{k}$ into $k$ measurable parts of equal measure such that all but at most an $\epsilon$-fraction of the $(d+1)$-tuples $A_{i_1},\ldots,A_{i_{d+1}}$ are homogenous with respect to $q$. \end{theorem}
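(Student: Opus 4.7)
The plan is to reduce the problem to an equipartition of the sphere $S^{d-1}$ centered at $q$ and then to bound the non-homogeneous $(d+1)$-tuples via a codimension-one measure-theoretic argument. Assume $q=0$, let $\pi\colon\mathbb{R}^d\setminus\{0\}\to S^{d-1}$ be radial projection, and set $\nu:=\pi_\ast\mu$. Since positive rescalings preserve membership of the origin in convex hulls, $0\in\conv(v_1,\dots,v_{d+1})$ if and only if $0\in\conv(\pi(v_1),\dots,\pi(v_{d+1}))$. Linear hyperplanes through $0$ project to great $(d-2)$-spheres in $S^{d-1}$, so the hyperplane-null hypothesis on $\mu$ forces $\nu$ to be atomless and to assign mass zero to every great sphere. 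It therefore suffices to build a partition of $S^{d-1}$ into $k$ cells of $\nu$-mass $1/k$ for which most $(d+1)$-tuples are spherically homogeneous, and then to pull back through $\pi^{-1}$ to obtain the $A_i\subset\mathbb{R}^d$.

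\emph{Construction of a small-cell equipartition.} For a parameter $\delta>0$ (chosen later) and $k\ge K_1(d,\delta)$, I construct a partition $S^{d-1}=B_1\cup\dots\cup B_k$ with $\nu(B_i)=1/k$ and each $B_i$ contained in a spherical cap of angular radius at most $\delta$. The construction covers $S^{d-1}$ by $N(d,\delta)$ caps of radius $\delta/2$, refines these to a disjoint partition $\{P_1,\dots,P_N\}$ of diameter at most $\delta$, uses atomlessness of $\nu$ to subdivide each $\nu|_{P_j}$ into $\lfloor k\nu(P_j)\rfloor$ subcells of $\nu$-mass $1/k$, and finally equalizes the residual masses via a small boundary exchange that preserves the diameter bound. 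Fix centers $c_i\in S^{d-1}$ with $B_i\subset\mathrm{cap}(c_i,\delta)$. By Hausdorff continuity of the convex hull in its vertices, a tuple $(B_{i_1},\dots,B_{i_{d+1}})$ is non-homogeneous only if $0$ lies within Euclidean distance $\delta$ of $\partial\,\conv(c_{i_1},\dots,c_{i_{d+1}})$; letting $E_\delta\subset(S^{d-1})^{d+1}$ denote this set, the coupling $u_j\sim\nu|_{B_{i_j}}$ (where each $u_j$ is within $\delta$ of $c_{i_j}$) bounds the fraction of non-homogeneous $(d+1)$-tuples by $\nu^{\otimes(d+1)}(E_{2\delta})$.

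\emph{Codimension-one argument.} The limit set $E_0=\bigcap_{\delta>0}E_\delta$ of tuples with $0$ exactly on the boundary of the convex hull is contained in the union over $d$-subsets $S\subset\{1,\dots,d+1\}$ of the event that the coordinates $u_j$ with $j\in S$ lie on a common linear hyperplane through $0$. Conditioning on $d-1$ of those coordinates, the remaining one is forced to lie on a specific linear hyperplane, which by hypothesis has $\nu$-measure zero; Fubini then gives $\nu^{\otimes(d+1)}(E_0)=0$. Continuity of measure from above produces $\nu^{\otimes(d+1)}(E_{2\delta})\to 0$ as $\delta\to 0$, and choosing $\delta$ small enough that this value falls below $\epsilon$ yields the conclusion for that fixed $\nu$.

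\emph{Main obstacle.} The chief difficulty is making the estimate \emph{uniform in $\mu$}, since the theorem demands $K(\epsilon,d)$ independent of the measure while the modulus ``$\nu^{\otimes(d+1)}(E_{2\delta})\to 0$'' can be arbitrarily slow (for example, for $\nu$ concentrated near, but not on, a great sphere). I would address this by first establishing the estimate in a discrete quantitative setting---uniform measure on $n$ points in general position on $S^{d-1}$, where the codimension-one rate is purely combinatorial and depends only on $\epsilon$, $d$, and $n/k$---and then transferring to arbitrary atomless $\nu$ via a weak-$\ast$ approximation argument, using the Aurenhammer--Hoffmann--Aronov equal-measure power diagram to construct partitions for $\nu$ whose homogeneity statistics inherit from those of the discrete approximant. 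A secondary technical point is the exact-equality step in the partition construction, which the boundary-exchange rebalancing (or, alternatively, the power diagram) is designed to handle.
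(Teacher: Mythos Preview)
Your radial projection and small-diameter cell argument is natural, and you correctly locate the crux: the bound $\nu^{\otimes(d+1)}(E_{2\delta})\to 0$ is not uniform in $\mu$, so this line of reasoning only yields a $K$ depending on the measure. The fix you propose does not close this gap.

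First, in the discrete step you assert that for $n$ points in general position the ``codimension-one rate is purely combinatorial and depends only on $\epsilon$, $d$, and $n/k$''. This is false under your own framework. Your bound on the fraction of non-homogeneous cell-tuples is $\nu^{\otimes(d+1)}(E_{2\delta})$, and for empirical $\nu$ this quantity still depends on the geometry of the point set: if the $n$ points are in general position yet clustered within angular distance $\ll\delta$ of a fixed great $(d-2)$-sphere, then essentially all ordered $(d+1)$-tuples lie in $E_{2\delta}$. General position kills $E_0$ but says nothing about $E_{2\delta}$, and there is no purely combinatorial substitute in your argument that controls non-homogeneous tuples uniformly over configurations. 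Second, the weak-$\ast$ transfer is also problematic: the statement ``there exists an equipartition into $k$ pieces with at most $\epsilon k^{d+1}$ non-homogeneous tuples'' is not weak-$\ast$ stable. A partition good for an approximating empirical measure need not be (even approximately) an equipartition for $\nu$, and power-diagram recentering will in general move cell boundaries across points and flip the homogeneity status of many tuples; you have no mechanism to control that.

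The paper avoids this uniformity issue entirely by a different route. Using the ham-sandwich theorem through $q$ together with a Radon-type characterization of ``$q$ lies in a simplex'', one shows (Lemma~\ref{imptlemma}) that for \emph{any} disjoint measurable $S_1,\dots,S_{d+1}$ there are subsets $Y_i\subseteq S_i$ with $\mu(Y_i)\ge c_d\,\mu(S_i)$ that form a homogeneous $(d+1)$-tuple, where $c_d>0$ depends only on $d$. In the paper's language this means the containment hypergraph $H_q$ is $(c_d,\mu)$-structured. One then invokes a Szemer\'edi-type regularity theorem for $(c,\mu)$-structured hypergraphs (Theorem~\ref{thm:regularity}) to produce an equipartition into $k$ parts with all but an $\epsilon$-fraction of $(d+1)$-tuples homogeneous. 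The constant $K$ coming from the regularity theorem depends only on $h=d+1$, $c=c_d$, and $\epsilon$---hence only on $d$ and $\epsilon$, giving the required uniformity in $\mu$. The key conceptual difference is that the paper extracts a \emph{density-increment} structure (large homogeneous sub-boxes inside any box) that is uniform over all $\mu$, rather than a metric smallness condition on cells whose effectiveness is measure-dependent.
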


An {\it equipartition} of a finite set is a partition of the set into subsets whose sizes differ by at most one. A discrete version of Theorem~\ref{mainstructure} is the following.

\begin{corollary}\label{discretestructure}
Given a positive integer $d$ and $\epsilon>0$, there exists another positive integer $K=K(\epsilon,d)\ge d+1$ such that for any $k \geq K$ the following statement is true. For any finite set $P \subseteq \mathbb{R}^d$ and for any point $q \in \mathbb{R}^d$, there is an equipartition $P=P_1 \cup \ldots \cup P_k$ such that all but at most an $\epsilon$-fraction of the $(d+1)$-tuples $P_{i_1},\ldots,P_{i_{d+1}}$ are homogenous with respect to $q$. \end{corollary}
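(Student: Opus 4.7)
The plan is to reduce Corollary~\ref{discretestructure} to Theorem~\ref{mainstructure} by a \emph{smooth-and-round} argument: replace the discrete set $P$ by an absolutely continuous Borel probability measure supported on tiny balls around each point, apply Theorem~\ref{mainstructure} to produce a partition of $\mathbb{R}^d$ into equal-measure parts, and then round this partition to an honest equipartition of $P$ via a bipartite transportation argument. Let $n=|P|$, take $K=K(\epsilon,d)$ supplied by Theorem~\ref{mainstructure}, and fix $k\ge K$.

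First, after an arbitrarily small generic perturbation of $P$ (to be undone by a compactness/limit argument at the end), assume $P$ is in general position with respect to $q$: no $d$ points of $P$ together with $q$ lie on a common hyperplane. Then every $(d+1)$-subset of $P$ either strictly contains $q$ or strictly excludes it, so by continuity and finiteness we may fix $\delta>0$ such that the balls $\{B(p,\delta)\}_{p\in P}$ are pairwise disjoint and such that, for every $(d+1)$-subset $\{p_1,\ldots,p_{d+1}\}\subseteq P$ and every choice $v_j\in B(p_j,\delta)$, $\conv(v_1,\ldots,v_{d+1})$ contains $q$ if and only if $\conv(p_1,\ldots,p_{d+1})$ contains $q$. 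Set $\mu=\tfrac{1}{n}\sum_{p\in P}\mu_p$, where $\mu_p$ is the uniform probability measure on $B(p,\delta)$; then $\mu$ is a finite Borel measure of total mass $1$, absolutely continuous, so every hyperplane has $\mu$-measure zero, and Theorem~\ref{mainstructure} yields a partition $\mathbb{R}^d=A_1\cup\cdots\cup A_k$ into $k$ measurable parts of equal $\mu$-measure $1/k$, at most $\epsilon\binom{k}{d+1}$ of whose $(d+1)$-tuples fail to be homogeneous with respect to $q$.

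To produce the equipartition of $P$, define $\alpha_{p,i}:=n\mu_p(A_i)\in[0,1]$; then $\sum_i\alpha_{p,i}=1$ for every $p$ and $\sum_p\alpha_{p,i}=n/k$ for every $i$. Hence $\alpha$ is a feasible fractional solution to the transportation LP that seeks $\beta\in\mathbb{R}_{\ge 0}^{P\times[k]}$ with row sums $1$, column sums in $[\lfloor n/k\rfloor,\lceil n/k\rceil]$, and support contained in $\{(p,i):\alpha_{p,i}>0\}$. By the total unimodularity of the bipartite transportation polytope and the integrality of the row/column data, an integer vertex $\beta\in\{0,1\}^{P\times[k]}$ exists; set $P_i:=\{p:\beta_{p,i}=1\}$ to obtain an equipartition of $P$. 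For any tuple $(P_{i_1},\ldots,P_{i_{d+1}})$ whose $A$-counterpart is homogeneous and any $p_j\in P_{i_j}$, the equality $\beta_{p_j,i_j}=1$ forces $\alpha_{p_j,i_j}>0$, so $B(p_j,\delta)\cap A_{i_j}$ has positive Lebesgue measure and we may choose some $v_j$ in it. The $A$-homogeneity determines the ``contains $q$'' status of $\conv(v_1,\ldots,v_{d+1})$, and our choice of $\delta$ guarantees that $\conv(p_1,\ldots,p_{d+1})$ has the same status. Thus the $P$-tuple is homogeneous and the $\epsilon$-bound passes from $A$-tuples to $P$-tuples.

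The main obstacle is the initial reduction to general position. When $P$ has degenerate $(d+1)$-subsets (with $q$ on some facet), a naive perturbation can break the homogeneity transfer for tuples that are ``all do not contain $q$'' in the perturbed problem but involve an original boundary simplex, since such a simplex contains $q$ in the closed sense while its strict-exclusion perturbation does not. This is resolved either by choosing the perturbation direction to flip as many boundary simplices as possible into ``strictly containing $q$'' (so that they only appear in ``all contain'' tuples, which transfer cleanly), or by exploiting that the set of equipartitions of $P$ is finite: from a sequence of perturbations tending to the identity one extracts a constant subsequence of equipartitions whose common limit inherits the $\epsilon$-homogeneity property for $P$ itself.
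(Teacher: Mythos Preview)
The paper states Corollary~\ref{discretestructure} without proof, presenting it simply as the ``discrete version'' of Theorem~\ref{mainstructure}; there is no argument in the paper to compare against. Your smooth-and-round strategy---replace each point by a small ball, apply Theorem~\ref{mainstructure} to the resulting absolutely continuous measure, and round the measure partition to an equipartition of $P$ via a transportation/total-unimodularity argument---is a correct and natural way to carry out this reduction.

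Two small points. First, there is a slip in the definition of the fractional solution: with $\mu_p$ a probability measure you want $\alpha_{p,i}=\mu_p(A_i)$ (equivalently $n\,\mu\bigl(A_i\cap B(p,\delta)\bigr)$), not $n\mu_p(A_i)$; the row and column sums you then quote ($1$ and $n/k$) are correct for the former. When you invoke total unimodularity, you should restrict the variables to the support of $\alpha$ \emph{before} taking a vertex, so that the integer vertex you obtain automatically satisfies $\beta_{p,i}=1\Rightarrow\alpha_{p,i}>0$; this restriction just deletes columns of a bipartite incidence matrix, so total unimodularity is preserved.

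Second, your closing discussion of degenerate configurations (where $q$ lies on the boundary of some simplex spanned by $P$) is more delicate than you indicate: the ``constant subsequence of equipartitions'' argument does not quite close, because a tuple that is ``none contain $q$'' for every perturbed configuration may still have an original simplex with $q$ on its boundary, hence containing $q$ in the closed sense. Fortunately this is moot here: the paper explicitly assumes throughout (see the end of the Introduction) that point configurations are in general position, with degenerate cases recovered by standard limiting arguments, so the general-position case you handle cleanly is all that is required.
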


Notice that due to B\'ar\'any's result~\cite{Ba} that $c(d)>0$, by taking $\epsilon \ll c(d)$, Corollary \ref{discretestructure} immediately implies Theorem \ref{pach}.

\medskip

The rest of the paper is organized as follows. Section~\ref{top} contains a detailed topological proof of the Boros-F\"uredi theorem (Theorem~\ref{BorosFuredi}), following  Gromov's approach in~\cite{Gro2}. In the two subsections of Section~\ref{random}, we present randomized constructions for Theorems~\ref{29result} and~\ref{answer}. In the plane, these constructions are nearly optimal; their overlap numbers are close to the value $\frac29$.  In Section~\ref{sec:det}, we give a deterministic recipe how to turn certain families of explicitly given expander graphs into families of highly overlapping $(d+1)$-uniform hypergraphs. In Section~\ref{sec:building} we give a criterion which ensures that certain finite quotients of the building of $PGL_r(F)$ are highly overlapping $r$-uniform hypergraphs; this criterion implies in particular Theorem~\ref{thm:PGL}. In Section~\ref{szemeredi}, we establish a Szemer\'edi-type theorem for infinite hypergraphs with a measure on their vertex sets (Theorem~\ref{thm:regularity}). This is used in Section~\ref{structuresub} for the proof of the geometric partition result Theorem~\ref{mainstructure}. In Section~\ref{optimal}, we show how this result can be applied to obtain Theorem~\ref{usestructure}. Section~\ref{tightsubsection} contains the proof of Theorem~\ref{tightness}.

For the sake of clarity of the presentation, in the rest of this paper, we systematically omit floor and ceiling signs whenever they are not crucial. We shall also assume throughout that all embeddings of hypergraphs into $\R^d$ are such that the vertices are mapped to points in general position. Even though the corresponding statements for degenerate embeddings will then follow from standard limiting arguments, it is convenient to make this assumption in order to not deal explicitly with such degeneracies in each of the proofs.

\section{A topological proof of the Boros-F\"uredi theorem}\label{top}

We will prove a somewhat stronger statement. Given a set $P$ of $n$ points in the plane, a {\em ray} (closed half-line) is said to be {\em exposed} if it has nonempty intersection with fewer than $n^2/9$ segments connecting point pairs in $P$. The set of all segments connecting two elements of $P$ forms a {\em complete geometric graph} $K(P)$ on the vertex set $P$, and we refer to these segments as the {\em edges} of $K(P)$.

\begin{proposition}\label{rays}
Given a set $P$ of $n$ points in the plane, one can always find a point $q$ not necessarily in $P$ such that no ray emanating from $q$ is exposed.
\end{proposition}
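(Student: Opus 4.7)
Plan: I would use the topological approach due to Gromov. For any candidate point $q\in\R^2$ in general position with respect to $P$, label the elements of $P$ by their angles around $q$, $\theta_1<\theta_2<\cdots<\theta_n$, viewed as points of $S^1$. A segment $p_ip_j$ meets the ray from $q$ in direction $\theta$ if and only if $\theta$ lies in the ``short'' arc from $\theta_i$ to $\theta_j$ (the arc of angular length strictly less than $\pi$), so the quantity I wish to control is
$$f_q(\theta)\;:=\;\#\bigl\{(i,j)\colon i<j,\ \theta\in\mathrm{shortarc}(\theta_i,\theta_j)\bigr\},$$
and the goal is to produce $q$ with $\min_{\theta\in S^1}f_q(\theta)\ge n^2/9$.

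The first ingredient is a centerpoint: by the planar centerpoint theorem, there exists $q$ such that every open half-plane bounded by a line through $q$ contains at least $n/3$ points of $P$. For such $q$, if $L_\theta,R_\theta$ denote the two half-plane point sets cut off by the line through $q$ at angle $\theta$, then every edge with one endpoint in $L_\theta$ and the other in $R_\theta$ meets either the ray at angle $\theta$ or the opposite ray at angle $\theta+\pi$, giving the identity
$$f_q(\theta)+f_q(\theta+\pi)\;=\;|L_\theta|\cdot|R_\theta|\;\ge\;\tfrac{n}{3}\cdot\tfrac{2n}{3}\;=\;\tfrac{2n^2}{9}.$$
This already secures the \emph{average} of $f_q$ over $\theta$ but not the minimum.

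The core topological step is to refine the choice of $q$ within the (convex) set of centerpoints so that each summand $f_q(\theta)$ is individually at least $n^2/9$. The odd function $h_q(\theta):=f_q(\theta)-f_q(\theta+\pi)$ on $S^1$ is piecewise constant and $\mathbb Z/2$-antisymmetric under the antipodal involution $\theta\mapsto\theta+\pi$. I would then apply a Borsuk--Ulam--style equivariant argument to the map $q\mapsto h_q$, taking values in the space of odd step functions on $S^1$, to produce a centerpoint $q$ for which $|h_q(\theta)|$ is dominated by the slack $|L_\theta|\cdot|R_\theta|-2n^2/9$ in the centerpoint inequality. Writing $f_q(\theta)=\tfrac12\bigl(|L_\theta|\cdot|R_\theta|+h_q(\theta)\bigr)$ then forces $f_q(\theta)\ge n^2/9$ for every direction.

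The main obstacle lies in this equivariant selection of $q$: demanding exact pointwise balance $h_q\equiv 0$ is too strong, since it would force $P$ to be centrally symmetric about $q$. Instead, the delicate observation is that balance is only required at those directions $\theta$ where $|L_\theta|\cdot|R_\theta|$ is close to its minimum $2n^2/9$, i.e.\ where one of $|L_\theta|,|R_\theta|$ equals $n/3$; such critical directions severely constrain the angular distribution of $P$ around $q$ and are exactly where a topological degree calculation on the circle can be brought to bear. Making this argument rigorous in a way that handles the piecewise-constant jumps of $f_q$ at the angles $\theta=\theta_i$, and simultaneously controlling the behavior at all critical directions with a single centerpoint $q$, is the heart of the proof.
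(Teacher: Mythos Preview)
Your setup is sound: the identity $f_q(\theta)+f_q(\theta+\pi)=|L_\theta|\cdot|R_\theta|\ge 2n^2/9$ at a centerpoint is correct and is indeed the right starting observation. But the proposal stops exactly where the actual work begins. You explicitly flag the ``equivariant selection of $q$'' as the main obstacle and do not carry it out, and there is no evident way to do so along the lines you suggest. The set of centerpoints is a convex region in the plane (possibly a single point), so there is no natural $\mathbb Z/2$-action on the domain to feed into a Borsuk--Ulam argument; the target $q\mapsto h_q$ is an infinite-dimensional step function on $S^1$, and you give no mechanism ensuring that varying $q$ over centerpoints can drive $|h_q(\theta)|$ below $|L_\theta|\cdot|R_\theta|-2n^2/9$ at \emph{every} critical direction simultaneously. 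Nothing in the sketch excludes the possibility that every centerpoint still has an exposed ray. As written, this is a plan with its central lemma missing, not a proof.

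The paper's argument avoids this difficulty by not trying to refine a chosen centerpoint at all. It proceeds by contradiction over \emph{all} points $q$ in a large disk $D$: assuming every $q$ has an exposed ray, one shows that the set $I_q\subset S^1$ of directions that are exposed, or lie between two exposed rays bounding a sector with fewer than $n/3$ points, is always a nonempty \emph{proper} open arc. The resulting open set $I\subset D\times S^1$ contains the full diagonal $\{(\rho,\rho):\rho\in S^1\}$ (since a ray from a boundary point of $D$ pointing outward meets no edges), while its complement $J$ is a closed set whose fibers over $D$ are nonempty proper closed arcs. A Brouwer-type lemma on $D\times S^1$ then forces $J$ to meet the diagonal, a contradiction. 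The topological input is applied to the global pair $(q,\theta)$ ranging over all of $D\times S^1$, not to a map out of the centerpoint locus; that global viewpoint is precisely what your outline lacks.
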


Suppose that such a point $q$ does not belong to $P$.
For each $p\in P$, ray emanating from $q$ in the direction opposite to $p$ intersects at least $n^2/9$ edges of $K(P)$. Each such edge, together with $p$,
spans a triangle that contains $q$. Every triangle is counted at most {\em three} times, therefore the total number of triangles containing $q$ is at least $n(n^2/9)/3=n^3/27$. If $q$ belongs to $P$, the number of (closed) triangles containing $q$ is larger than $n^3/27$.

Thus, it is sufficient to prove Proposition~\ref{rays}. Suppose for a contradiction that for each point $q$ of the plane, there is an exposed ray emanating from $q$. Let $D$ denote a large disk around the origin $O$, which contains all elements of $P$, and let $S^1$ denote the boundary of $D$. For $\sigma\in \R^2\setminus\{O\}$, we denote by ${\rm ray}(q,\sigma)$ the ray emanating from $q$ in the direction parallel to $\overrightarrow{O\sigma}$.

Notice that for any two exposed rays, ${\rm ray}(q,\sigma)$ and ${\rm ray}(q,\tau)$, emanating from the same point, one of the two closed regions bounded by them contains fewer than $n/3$ points of $P$. Otherwise, one of the regions has $x$ points of $P$ with $n/3 \leq x \leq 2n/3$, and the two
boundary rays together would intersect at least $x(n-x) \geq (n/3)(2n/3)=2n^2/9$ edges, which implies at least one of them was not
exposed.

Let $I$ denote the set of all pairs $(q,\varrho)\in D\times S^1$, for which ${\rm ray}(q,\varrho)$ is exposed or belongs to the closed region bounded by two exposed rays, ${\rm ray}(q,\sigma)$ and ${\rm ray}(q,\tau)$, that contains fewer than $n/3$ points of $P$.

\begin{claim}\label{properties}The set $I$ has the following properties:
\begin{enumerate}
\item[(a)] $I$ is an {\em open} subset of $D\times S^1$,
\item[(b)] $(\varrho,\varrho)\in I$ for all $\varrho\in S^1$,
\item[(c)] for every $q\in D$, the set $I_q\stackrel{\mathrm{def}}{=}\{\varrho\in S^1:\  (q,\varrho)\in I\}$ is a nonempty proper subinterval of $S^1$.
\end{enumerate}
\end{claim}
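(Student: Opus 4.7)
The plan is to verify the three properties in sequence, using general position of the embedding and the already-noted observation that any two exposed rays from $q$ bound one closed region containing fewer than $n/3$ points of $P$.

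For (a), I would split on which defining clause puts $(q_0,\varrho_0)$ in $I$. If ${\rm ray}(q_0,\varrho_0)$ is exposed, then the intersection count with the finite edge set $K(P)$ is locally constant away from the measure-zero degenerate locus (rays passing through a vertex of $P$ or emanating from an edge), so on a small neighbourhood of $(q_0,\varrho_0)$ the count stays below $n^2/9$. If instead $\varrho_0$ lies in the narrow arc between two exposed rays $\sigma_0,\tau_0$, I would perturb and pick nearby exposed $\sigma',\tau'$; since the point count in the arc between them is integer-valued, it remains below $n/3$ on a sufficiently small neighbourhood, and the perturbed direction still lies between $\sigma'$ and $\tau'$. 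For (b), the condition $q=\varrho\in\partial D=S^1$ means ${\rm ray}(\varrho,\varrho)$ leaves $D$ radially outward, and since $P\subseteq D$, this ray meets no edges and is trivially exposed; hence $(\varrho,\varrho)\in I$.

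For (c), nonemptiness of $I_q$ is immediate from the contradiction hypothesis that every $q\in D$ admits an exposed ray. The arc (connectedness) property comes from the observation: given $\varrho_1,\varrho_2\in I_q$, by pulling in the exposed witnesses on each side and applying the observation to pairs of them, one of the two arcs of $S^1$ between $\varrho_1$ and $\varrho_2$ is covered by a chain of overlapping narrow arcs and hence lies in $I_q$. For properness, I would choose via the discrete intermediate value theorem a bisecting direction $\varrho^*$ with roughly $n/2$ points of $P$ on each side of the line through $q$ of direction $\varrho^*$; the observation applied to the pair $\varrho^*,-\varrho^*$ forces at least one of them not to be exposed, and the bisecting property together with the $<n/3$ bound on any narrow arc should prevent that non-exposed direction from being covered.

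The main obstacle I expect is the properness half of (c): ruling out that the non-exposed bisecting direction sits inside a narrow arc $[\sigma,\tau]$ between exposed rays. Such a $[\sigma,\tau]$ has $<n/3$ points of $P$, whereas the half-planes on either side of the bisecting line each contain close to $n/2$ points, and clashing these two constraints requires a careful pigeonhole on how $[\sigma,\tau]$ straddles $\varrho^*$ relative to the two bisecting halves. Parts (a), (b), and the arc portion of (c) are by comparison routine continuity and case analysis.
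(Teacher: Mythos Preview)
Your plan for (a), (b), and the connectedness half of (c) is fine and matches the paper, which simply declares these to follow directly from the definitions and the contrapositive hypothesis.

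For properness the paper takes a slightly different route, and the difference matters. The paper does \emph{not} pick an arbitrary bisecting line. It first fixes an exposed direction $\varrho$ (which exists by the contradiction hypothesis) and only then chooses $\varrho'$ so that both closed regions bounded by ${\rm ray}(q,\varrho)$ and ${\rm ray}(q,\varrho')$ contain at least $n/2$ points. If $\varrho'\in I_q$, witnessed by exposed $\sigma,\tau$, one now has \emph{three} exposed rays $\varrho,\sigma,\tau$; the observation forces each of the three resulting cones to contain fewer than $n/3$ points, contradicting that one of the two $(\varrho,\varrho')$-regions (with $\ge n/2$ points) lies inside the union of two of them.

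Your version anchors to a bisecting line with no reference to an exposed ray, so you only know that \emph{at least one} of $\varrho^*,-\varrho^*$ is non-exposed. In the case where $\varrho^*$ happens to be exposed, your argument collapses to the paper's with $\varrho=\varrho^*$, $\varrho'=-\varrho^*$. But when neither $\pm\varrho^*$ is exposed, the ``$<n/3$ versus $\sim n/2$'' clash you describe is not enough on its own: if $-\varrho^*$ lies in a narrow cone with exposed boundaries $\sigma\in H_1$, $\tau\in H_2$, you have only the two exposed rays $\sigma,\tau$ to work with, and the straightforward count gives only $|H_1|<n/3+n/3+n/3$, which is no contradiction. You would then need to bring in the second narrow cone (the one covering $\varrho^*$) and run the observation on several pairs among the four exposed boundary rays; this can be pushed through, but it is a genuine case analysis you have not outlined, and it is strictly more work than the paper's three-ray argument. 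The cleanest repair is exactly the paper's move: start from a known exposed $\varrho$ and bisect relative to it, so that the third exposed ray is built in from the start.
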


\begin{proof}
Parts (a) and (b) directly follow from the definition. It is also clear, by our contrapositive assumption, that $I_q$ is a nonempty interval for every $q\in D$.

We have to show only that $I_q\neq S^1$. To see this, let ${\rm ray}(q,\varrho)$ be an exposed ray
emanating from $q$, and let $\varrho'\in S^1$ be a direction such that both closed regions bounded by ${\rm ray}(q,\varrho)$ and ${\rm ray}(q,\varrho')$ contain at least $n/2$ points of $P$.

We claim that $\varrho'\not\in I_q$. Otherwise, we can select two exposed rays, ${\rm ray}(q,\sigma)$
and ${\rm ray}(q,\tau)$, such that ${\rm ray}(q,\varrho')$ belongs to the closed region bounded by
them which contains fewer than $n/3$ points. The {\em three} rays, ${\rm ray}(q,\varrho)$,
${\rm ray}(q,\sigma)$, and ${\rm ray}(q,\tau)$, cut the plane into
{\em three} closed regions, and it is easy to see that each of them must contain fewer than
$n/3$ points, which is a contradiction. Indeed, if e.g. the region bounded by ${\rm ray}(q,\varrho)$ and
 ${\rm ray}(q,\sigma)$ that does not contain ${\rm ray}(q,\tau)$ had at least $n/3$ points, then by the discussion above
the closure of its complement had fewer than $n/3$ points, contradicting our assumption that both closed regions bounded
${\rm ray}(q,\varrho)$ and ${\rm ray}(q,\varrho')$ contain at least $n/2$ points.
\end{proof}

Now we can obtain the desired contradiction, thus completing the proof of  Proposition~\ref{rays},
by applying to $J\stackrel{\mathrm{def}}{=}(D\times S^1)\setminus I$ the following version of the Brouwer fixed point theorem.

\begin{lemma}\label{lem:brouwer}
Let $J$ be a closed subset of $D\times S^1$ with the property that for every $q\in D$ we have that $J_q\stackrel{\mathrm{def}}{=}\{\varrho\in S^1:\  (q,\varrho)\in J\}$ is a nonempty proper (closed) subinterval of $S^1$. Then $(\varrho,\varrho)\in J$, for some $\varrho\in S^1$.
\end{lemma}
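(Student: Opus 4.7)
The plan is to deduce Lemma~\ref{lem:brouwer} from a multi-valued generalization of Brouwer's fixed point theorem, such as the Eilenberg-Montgomery fixed point theorem: every upper semi-continuous correspondence $\Phi: X \to 2^X$ with non-empty compact acyclic values on a compact convex set $X\subseteq\R^n$ has a fixed point $x\in\Phi(x)$. Closed arcs, being contractible, are in particular acyclic, so this theorem is ideally suited to the arc-valued fibers $J_q$.

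Concretely, I take $X=D$ and define $\Phi: D \to 2^D$ by $\Phi(q)=J_q$, with each $J_q\subseteq S^1$ regarded as a subset of $D$ via the inclusion $S^1=\partial D \hookrightarrow D$. The hypotheses are immediate: $D$ is a compact convex subset of $\R^2$; each value $J_q$ is non-empty by assumption, compact as a closed subset of the compact space $S^1$, and contractible as a closed arc (a single point or homeomorph of a closed interval). Upper semi-continuity of $\Phi$ follows directly from the closedness of $J$ in the compact product $D\times S^1$: whenever $q_n\to q$ in $D$ and $\varrho_n\in J_{q_n}$ with $\varrho_n\to\varrho$ in $S^1$, the pair $(q,\varrho)$ is a limit point of $J$ and hence lies in $J$, so $\varrho\in J_q$. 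The theorem then produces a point $q_0\in D$ satisfying $q_0\in\Phi(q_0)=J_{q_0}$. Since $J_{q_0}\subseteq S^1$, this forces $q_0\in S^1$, and the relation $q_0\in J_{q_0}$ is precisely the assertion $(q_0,q_0)\in J$.

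The main subtlety, and the reason for invoking Eilenberg-Montgomery rather than Brouwer directly, is that one cannot in general continuously single out a point from each fiber $J_q$. The natural candidate $\psi(q)=\text{midpoint of }J_q$ defines a map $D\to S^1\subseteq D$ whose fixed points would suffice, but $\psi$ need not be continuous: since $q\mapsto J_q$ is only upper semi-continuous, the arc $J_q$ can enlarge abruptly at isolated points $q$, causing the midpoint to jump. Kakutani's fixed point theorem is also not applicable here, because a proper closed arc on $S^1$ is not a convex subset of the ambient $\R^2$. Eilenberg-Montgomery is tailored to this situation, since it replaces Kakutani's convexity hypothesis by the weaker topological condition of acyclicity, which holds automatically for arcs.
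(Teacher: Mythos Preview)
Your proof is correct, but it follows a different path from the paper's argument. You invoke the Eilenberg--Montgomery fixed point theorem as a black box: the disk $D$ is a compact acyclic ANR, the correspondence $q\mapsto J_q\subseteq S^1\subseteq D$ has nonempty compact acyclic (indeed contractible) values, and upper semi-continuity is exactly the closedness of $J$ in the compact product. A fixed point $q_0\in J_{q_0}$ must lie in $S^1$, and then $(q_0,q_0)\in J$. All the verifications are sound.

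The paper instead argues by contradiction using only the fundamental group. Assuming no $(\varrho,\varrho)$ lies in $J$, it restricts to $J_S=J\cap(S^1\times S^1)$ and compares the two projections $J_S\to S^1$. The first has contractible (arc) fibers, so it induces an isomorphism on $\pi_1$; the no-fixed-point assumption makes the two projections homotopic, so the second would also induce an isomorphism on $\pi_1$. But the second projection extends over all of $J$, which is fibered over the disk with contractible fibers and hence has trivial $\pi_1$---a contradiction. This is more self-contained: it uses nothing beyond $\pi_1$ and the observation that a map with contractible fibers is a homotopy equivalence. Your route is shorter and more conceptual, at the cost of importing a substantially deeper theorem (Eilenberg--Montgomery ultimately rests on Vietoris--Begle type machinery, which is morally the same engine the paper's argument exploits by hand in this low-dimensional case). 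Your remark about why neither a continuous midpoint selection nor Kakutani suffices is apt and shows you understand why the generality of acyclic values is needed.
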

To see why Lemma~\ref{lem:brouwer} holds true, assume for contradiction that $(\varrho,\varrho)\notin J$, for all $\varrho\in S^1$. Write $J_S\stackrel{\mathrm{def}}{=}J\cap (S^1\times S^1)$, and let $Proj_1, Proj_2:J_S\to S^1$ denote the projections onto the first and second coordinates, respectively. The fibers of $Proj_1$ are nonempty proper closed intervals, and therefore $Proj_1$ induces a bijection between $\pi_1(J_S)$ and $\pi_1(S^1)=\mathbb Z$. But, the contrapositive assumption implies that $Proj_1$ and $Proj_2$ are homotopic, and therefore $Proj_2$ also induces a bijection between $\pi_1(J_S)$ and $\pi_1(S^1)$. This is a contradiction since $Proj_2$ extends to $J$, and $\pi_1(J)=0$ since $J$ is fibered over $D$ with fibers equal to intervals.

Clearly, Lemma~\ref{lem:brouwer} contradicts part (b) of Claim~\ref{properties}. \qed

\section{Sparse constructions using the probabilistic method}\label{random}

In this section, we prove Theorems~\ref{29result} and~\ref{answer} using the probabilistic method. Our planar construction is nearly optimal, but in higher dimensions the overlap numbers of our hypergraphs will be far from maximal. We note that our proofs use a non-uniformly random choice of $(d+1)$-uniform hypergraphs of degree $k_d$, which is designed especially for our purposes. Nevertheless, the argument in Section~\ref{optimal}, which uses Theorem~\ref{mainstructure}, shows that assuming the degree $r$ satisfies a large enough lower bound depending on $d$ (which is inferior to the bound on $k_d$ obtained in this section), for a hypergraph $H$ chosen uniformly at random among all  $(d+1)$-uniform hypergraphs of degree $r$, with high probability $c(H)$ will be bounded below by a positive constant depending only on $d$ (which is also inferior to the bound on $c_d$ obtained in this section).

\subsection{Highly overlapping triple systems---Proof of Theorem~\ref{29result}}\label{2/9}

The outline of the proof of Theorem \ref{29result} is the following. We first pick $t$ randomly and independently selected partitions of the set $[n]=\{1,2,\ldots,n\}$ into parts of equal size $b$. We define $H_n$ to be the 3-uniform hypergraph with vertex set $[n]$, consisting of all triples that lie in the same part in at least one of the $t$ partitions. Finally, we will show that $H_n$ meets the requirements of Theorem \ref{29result}.

We need the following simple technical lemma. A key ingredient that is used in the proof is the Chernoff bound for negatively associated random variables (see, e.g., \cite{DD}). It implies that if $A_1,\ldots,A_n$ are $n$ mutually negatively correlated events in an arbitrary probability space such that $A_i$ has probability $p_i$, then the probability that the number of $A_i$ which occur exceeds the expected number $p_1+\cdots+p_n$ by at least $a$ is at most $e^{-2a^2/n}$.

\begin{lemma}\label{lemmafor29}
Suppose that $\delta>0$, and let $b=\delta^{-3}$, $\beta=2e^{-2\delta^2b}$, $r=4\beta^{-2}b$, $t=r\delta^{-1}$. If $n$ is a sufficiently large multiple of $b$, then there exist $t$ partitions $\mathcal{P}_1,\ldots,\mathcal{P}_t$ of
$[n]$, each consisting of $n/b$ parts of size $b$, with the following two properties:
\begin{enumerate}
\item any two parts of size $b$ in different partitions have at most {\em two} elements in common,
\item for every subset $S \subseteq [n]$, there are fewer than $r$ partitions $\mathcal{P}_i$ for which at least $\beta n/b$ parts contain at least $\left(\frac{|S|}{n}+\delta \right)b$ elements of $S$.
\end{enumerate}
\end{lemma}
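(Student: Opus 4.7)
The plan is to use the probabilistic method: sample $\mathcal{P}_1,\dots,\mathcal{P}_t$ independently and uniformly at random from the set of all partitions of $[n]$ into $n/b$ blocks of size $b$, and show that, for $n$ large enough, each of the two requirements fails with probability $o(1)$, so a union bound produces the desired partitions.

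For condition~(1), let $T$ denote the number of triples $\{x,y,z\}\subseteq[n]$ that are contained in a common block of at least two distinct partitions. A pair of blocks from different partitions shares three or more elements if and only if it contributes at least one such triple, so condition~(1) fails exactly when $T\geq 1$. For a fixed triple and a uniformly random partition, the probability that all three elements lie in the same block is $\binom{b-1}{2}/\binom{n-1}{2}=O(b^2/n^2)$, so by independence across partitions
$$\mathbb{E}[T]\;\leq\;\binom{t}{2}\binom{n}{3}\Bigl(\tfrac{b^2}{n^2}\Bigr)^{2}\;=\;O\!\Bigl(\tfrac{t^2 b^4}{n}\Bigr),$$
which tends to $0$ since $t$ and $b$ depend only on $\delta$. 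Markov's inequality then gives condition~(1) with probability $1-o(1)$.

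For condition~(2), fix $S\subseteq[n]$ and set $\alpha=|S|/n$. For each $i\in[t]$ and each block $j$ of $\mathcal{P}_i$, let $X_j^{(i)}$ be the indicator of the event that block $j$ contains at least $(\alpha+\delta)b$ elements of $S$. Marginally, the block is a uniformly random $b$-subset of $[n]$, so $|S\cap \text{block}|$ is hypergeometric with mean $\alpha b$; Hoeffding's hypergeometric Chernoff bound yields $\mathbb{E}[X_j^{(i)}]\le e^{-2\delta^2 b}=\beta/2$. Because the assignment of elements to blocks in a uniformly random partition places negative correlations on block statistics, the indicators $(X_j^{(i)})_j$ are negatively associated, so the Chernoff bound for negatively correlated events recalled in the paper gives
$$\Pr\!\Bigl[\textstyle\sum_j X_j^{(i)}\;\geq\;\beta n/b\Bigr]\;\leq\;\exp\!\Bigl(-\tfrac{\beta^2 n}{2b}\Bigr)\;=:\;p.$$
Let $Y_S$ count the indices $i$ for which $\mathcal{P}_i$ is ``bad for~$S$.'' The variables $Y_S$ are sums of $t$ independent Bernoullis of parameter at most $p$, so $\Pr[Y_S\ge r]\le\binom{t}{r}p^r$, and a union bound over all subsets $S\subseteq[n]$ gives
$$\Pr[\text{condition (2) fails}]\;\leq\;2^{n}\binom{t}{r}p^{r}.$$

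The main obstacle, and the reason for the seemingly baroque choice of parameters, is precisely the balance of this last bound against the $2^n$ subsets. Substituting $r=4\beta^{-2}b$ yields $p^{r}=\exp(-2n)$, so the bound becomes $\binom{t}{r}(2e^{-2})^{n}$, which tends to $0$ as $n\to\infty$ since $t$ and $r$ are constants in $\delta$. Combining with the bound from condition~(1), both conditions hold simultaneously with positive probability, and hence partitions $\mathcal{P}_1,\dots,\mathcal{P}_t$ with the required properties exist.
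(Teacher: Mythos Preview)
Your proof is correct and follows essentially the same probabilistic-method argument as the paper: independent uniform random partitions, a first-moment bound on triples (equivalently, pairs of blocks sharing three elements) for condition~(1), and a two-stage Chernoff argument via negative association for condition~(2), balanced against the $2^n$ union bound by the choice $r=4\beta^{-2}b$. The only cosmetic differences are that you phrase~(1) via triples rather than pairs of blocks and invoke Hoeffding's hypergeometric inequality explicitly, while the paper subsumes both Chernoff steps under a single negative-correlation Chernoff bound.
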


\begin{proof}
We verify that $t$ {\em randomly} selected partitions of $[n]$ into parts of equal size $b$ almost surely have the desired properties. Fix a set $S \subseteq [n]$, and consider a random partition $\mathcal{P}$ of $[n]$ into parts $I_1,\ldots,I_{n/b}$ of size $b$.
For any $1\le i\le n/b$, let $A_i$ denote the event that $|I_i \cap S| \geq \left(\frac{|S|}{n}+\delta\right)b$. For any $1 \leq j \leq b$, let $A_{i,j}$ denote the event that the $j$th element of $I_i$ is in $S$. The events $A_{i,1},\ldots,A_{i,b}$ are mutually negatively correlated and each of them has probability $|S|/n$. Thus, by Chernoff's bound~\cite{DD}, we have
$$
\Pr[A_i] \leq e^{-2(\delta b)^2/b} = e^{-2\delta^2b}=\frac{\beta}{2}.
$$
Let $X$ denote the event that at least $\beta n/b$ of the events $A_1,\ldots,A_{n/b}$ occur. Since the events $A_1,\ldots,A_{n/b}$ are also mutually negatively correlated and each has probability at most $\beta/2$, we can again apply the Chernoff bound~\cite{DD} to obtain $$\Pr[X] \leq e^{-2(\frac{\beta n}{2b})^2/(n/b)}=e^{-\frac{1}{2}\beta^2 n/b}.$$

Take $t$ independent random partitions of $[n]$,  $\mathcal{P}_1,\ldots,\mathcal{P}_t$, each consisting of $n/b$ parts of size $b$.
The probability that a given pair of parts of size $b$ have at least $3$ elements in common is at most ${b \choose 3}\left(\frac{b}{n}\right)^3 \leq \frac{b^6}{6n^3}$. Since there are ${tn/b \choose 2}$ such pairs, by linearity of expectation, the probability that there is a pair sharing at least $3$ elements is at most ${tn/b \choose 2}\frac{b^6}{6n^3}<\frac{t^2b^4}{12n}$. Hence, by our choice of parameters, almost surely condition (1) will be satisfied.

For a fixed $S\subseteq [n]$, the probability that for at least $r$ of the partitions $\mathcal{P}_1,\ldots,\mathcal{P}_t$, at least $\beta n/b$ of the $b$-element subsets of the partition have at least $\left(\frac{|S|}{n}+\delta\right)b$ elements in $S$ is at most $${t \choose r}(\Pr[X])^r \leq {t \choose r}e^{-r\frac{1}{2}\beta^2 n/b}={t \choose r}e^{-2n} \leq e^{-n}.$$ The number of subsets $S$ of $[n]$ is $2^n$. Hence, by linearity of expectation,
the expected number of subsets $S$ with property (2) is $o(1)$. We conclude that there are $t$ such partitions with the desired properties.
\end{proof}

Let $\delta=\e/50$ and $k=t{b-1 \choose 2}$. Consider  the
$3$-uniform hypergraph $H_n$ with $V(H_n)=[n]$, the hyperedges of
which are those triples that lie in the same part in at least one
(hence, precisely one) of the partitions
$\mathcal{P}_1,\ldots,\mathcal{P}_t$ meeting the requirements of
Lemma \ref{lemmafor29}. Clearly, in $H_n$, each vertex belongs to
$k=t{b-1 \choose 2}$ hyperedges.

The proof of Theorem~\ref{29result} can now be completed by adapting the idea of Bukh \cite{Bu}.
Consider an embedding of the vertices of $H_n$ in the plane. We shall use the following lemma of Ceder~\cite{Ce}:

\begin{lemma}[Ceder~\cite{Ce}]\label{lem:ceder}
Assume that $n$ is divisible by $6$. Given any set of $n$ points in the plane, there are three concurrent lines that divide the plane
into $6$ angular regions, each containing roughly the same number of points. More precisely, there are
disjoint $\frac{n}{6}$-element point sets $S_1,\ldots,S_6$ such that $S_i$ is contained in the closure
of region $i$.
\end{lemma}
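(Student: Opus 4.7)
My plan is to prove the continuous analogue of Ceder's lemma, from which the stated discrete version will follow by routine approximation. Specifically, I would show: for any absolutely continuous Borel probability measure $\mu$ on $\mathbb{R}^2$ (with no mass on any line), there exist three concurrent lines dividing $\mathbb{R}^2$ into six open angular sectors of $\mu$-mass exactly $\tfrac{1}{6}$ each. The reduction from Ceder's lemma is the usual one: convolve the empirical measure $\tfrac{1}{n}\sum_{p\in P}\delta_p$ with a Gaussian of small variance, apply the continuous equipartition, and pass to a subsequential limit as the variance shrinks, using compactness on the one-point compactification of the configuration space. The limit supplies three concurrent lines such that each closed sector contains at least $n/6$ points of $P$, and the required disjoint subsets $S_1,\ldots,S_6$ of size $n/6$ are then extracted by assigning boundary points (those lying on one of the three lines) to neighboring closed regions in a consistent greedy fashion.

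For the continuous equipartition, I would use a topological approach in the spirit of Borsuk--Ulam. The configuration space consists of a center $q\in \mathbb{R}^2$ together with an unordered triple $\{\alpha_1,\alpha_2,\alpha_3\}$ of distinct directions in $\mathbb{R}/\pi\mathbb{Z}$, giving a $5$-dimensional manifold. Labeling the six resulting sectors cyclically and writing $F_i(q,\alpha)=\mu(\mathrm{sector}\ i)-\tfrac{1}{6}$, we seek a common zero of $F_1,\ldots,F_6$ (constrained by $\sum_i F_i\equiv 0$, so really $5$ independent equations on $5$ parameters). The dihedral group $D_6$ acts on sector labels via the permutations of $(\alpha_1,\alpha_2,\alpha_3)$ and the rotation of the plane by $\pi$, and the plan is to exploit this symmetry.

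A convenient decomposition proceeds in two stages. Stage~(i): for each fixed $q$, find angles making opposite sectors have equal mass, i.e.\ $F_i=F_{i+3}$ for $i=1,2,3$. These three equations on three angle parameters admit a solution by a Borsuk--Ulam-type argument on the space of unordered triples in $S^1$, using the $\mathbb{Z}/2$-action that rotates the plane by $\pi$ and negates the relevant auxiliary map. Stage~(ii): use the residual freedom in $q$, together with the $\mathbb{Z}/3$-action that cyclically permutes the three opposite-sector pairs, to arrange that the three pair-masses coincide; each then equals $\tfrac{1}{3}$, and each individual sector has mass $\tfrac{1}{6}$. The second stage is a degree argument: as $q$ escapes to infinity, one sector eventually captures almost all of $\mu$, which pins down the degree of the resulting $\mathbb{Z}/3$-equivariant map from a large disk and forces a zero inside a compact region of the plane.

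The main obstacle I foresee is ensuring global continuity of the Stage~(i) selection across configurations where the balancing angles bifurcate or become nonunique, which can happen at symmetry loci of $\mu$. One remedy is to stratify the configuration space and glue local continuous selections; a cleaner alternative is to bypass the two-stage split entirely and run a single equivariant obstruction argument, showing that every $D_6$-equivariant map from the compactified configuration space to the unit sphere of $\{F\in \mathbb{R}^6:\sum F_i=0\}$ must have a zero, by computing a primary obstruction class that turns out to be nonzero. In Ceder's original paper~\cite{Ce}, the analogous difficulty is handled by an elementary but delicate planar continuity argument, which would also suffice here at the cost of more bookkeeping; either route reduces the hard geometric content of the lemma to a topological fixed-point principle.
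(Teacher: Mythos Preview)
The paper does not prove this lemma; it is quoted from Ceder~\cite{Ce} as a black box and then applied in the proof of Theorem~\ref{29result}. So there is no in-paper argument to compare your proposal against, and the task reduces to whether your outline actually yields a proof.

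Your two-stage decomposition has a genuine gap at Stage~(i). You assert that for \emph{every} fixed center $q$ one can choose three concurrent lines so that opposite sectors have equal $\mu$-mass. But the three conditions $F_i=F_{i+3}$, $i=1,2,3$, force each of the three lines individually to be a halving line of $\mu$ through $q$: summing them gives $\mu(A_1\cup A_2\cup A_3)=\mu(A_4\cup A_5\cup A_6)=\tfrac12$, and these two unions are exactly the two half-planes determined by line~$1$; cyclically the same holds for lines~$2$ and~$3$. For $q$ outside the convex hull of $\mathrm{supp}(\mu)$ this is typically impossible. Concretely, if $\mu$ is uniform on a disk and $q$ lies on the disk's axis of symmetry but outside the disk, then a line through $q$ halves $\mu$ only if it passes through the disk's center, and the unique such line is the axis itself; three \emph{distinct} halving lines through $q$ do not exist, so Stage~(i) has no solution at that $q$. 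Whatever Borsuk--Ulam variant you intend cannot override this counterexample (note also that rotation by $\pi$ about $q$ fixes every unordered triple of lines through $q$, so the $\mathbb{Z}/2$-action you invoke is trivial on that configuration space). Since Stage~(ii) presupposes a Stage~(i) solution varying with $q$, the decomposition collapses.

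Your fallbacks are not developed: the single $D_6$-equivariant obstruction computation is precisely the content of the result and you have not carried it out, and your closing sentence simply defers to Ceder. For the record, the standard proof is an elementary one-parameter intermediate-value argument---rotate a halving line, track the two auxiliary lines that trisect each side, and use continuity to locate a concurrent position---with no algebraic topology required. If you want a self-contained argument, that is the route to take.
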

We shall assume throughout the $n$ is divisible by $6$. Let $S_1,\ldots,S_6$ be the sets from Lemma~\ref{lem:ceder}, and let $p$ denote the intersection point of the {\em three} lines from Lemma~\ref{lem:ceder}. By a simple case analysis,
Bukh~\cite{Bu} showed that, for every choice of {\em six} points, one from each $S_i$, at least $8$ of the
${6 \choose 3}=20$ triangles induced by them contain $p$.


Let $I \subseteq [n]$ be a $b$-element set such that $|I \cap S_i| \leq (\frac{|S_i|}{n}+\delta)b=(1+6\delta)\frac{b}{6}$, for $1 \leq i \leq 6$. Obviously, we have
$$|I \cap S_i| \geq b-5(1+6\delta)\frac{b}{6} \ge (1-30\delta)\frac{b}{6},$$
for every $i$. Each of the $$\prod_{i=1}^6 |I \cap S_i| \geq (1-30\delta)^6\left(\frac{b}{6}\right)^6 $$
$6$-element sets with one vertex from each $I \cap S_i$ induces at least $8$ triangles that contain point $p$. Each of these triangles belongs to at most
$(1+6\delta)^3(\frac{b}{6})^3$ such $6$-element sets. Thus, there are at least $$8\frac{(1-30\delta)^6\left(\frac{b}{6}\right)^6}{(1+6\delta)^3(\frac{b}{6})^3} \geq
\frac{1}{27}(1-200\delta)b^3 > (1-200\delta)\frac{2}{9}{b \choose 3}$$ triangles induced by {\em three} vertices in $I$ which contain $p$.

According to part 2 of Lemma~\ref{lemmafor29}, for every $i,\;$ $1 \leq i \leq 6$, fewer than $r$ partitions $\mathcal P_j$ have the property that at least $\beta \frac{n}{b}$ of their parts contain at least
$(1+6\delta)\frac{b}{6}$ elements of $S_i$. Hence, the total number of $b$-element parts $I$ in all $t$ partitions, for which
 $|I \cap S_i| > (1+6\delta)\frac{b}{6}$ for some $i,\;$ $1 \leq i \leq 6$, is smaller
than
$$6r\frac{n}{b}+6t\beta \frac{n}{b} = 6\delta t\frac{n}{b}+6\beta t \frac{n}{b} \leq 10 \delta t\frac{n}{b}.$$ It follows that the fraction of the $t\frac{n}{b}{b \choose 3}$ hyperedges of $H_n$ that contain point $p$ in this embedding is at least $$(1-10\delta)(1-200\delta)\frac{2}{9}\geq  (1-210\delta)\frac{2}{9} \geq \frac{2}{9}-\e,$$ which completes the proof of Theorem \ref{29result}. \qed

\subsection{Higher dimensions---Proof of Theorem~\ref{answer}}\label{sec:answer}

As in the proof of Theorem~\ref{29result}, we establish Theorem~\ref{answer} using Lemma \ref{lemmafor29}. We may assume that
$c'_{d}=1/m$ with $m$ an integer, where $c_d'$ is the constant in Theorem~\ref{pach}, and let
$n$ be a multiple of $m$. Set $\delta=\frac{1}{2m(m-1)}$ and apply Lemma \ref{lemmafor29}. Consider now the $(d+1)$-uniform hypergraph $H_n$ with $V(H_n)=[n]$, the hyperedges of which are those $(d+1)$-element sets that lie in the same part in at least one (hence, precisely one) of the partitions $\mathcal{P}_1,\ldots,\mathcal{P}_t$ meeting the requirements of Lemma \ref{lemmafor29}. Clearly, in $H_n$, each vertex belongs to $k_d=t{b-1 \choose d}$ hyperedges.

Consider now any embedding of $V(H_n)$ into $\mathbb{R}^d$, and let
$P$ denote the image of $V(H_n)$. By Theorem \ref{pach}, one can
find disjoint $c'_d n$-element subsets $P_1,P_2,\ldots,P_{d+1}\subseteq P$ and a point $q$ such that picking one element from each subset $P_i$, their convex hull always contains $q$. We extend this to a partition $P=P_1 \cup \ldots \cup P_{m}$ into subsets of size $n/m$ by picking the $P_i$ for $d+1<i \leq m$ of size $n/m$ arbitrarily.

Let $I \subseteq [n]$ be a $b$-element set such that $$\forall1 \leq i \leq m,\quad |I \cap P_i| \leq \left(\frac{|P_i|}{n}+\delta\right)b=\left(1+\frac{1}{2(m-1)}\right)\frac{b}{m}.$$
 Obviously, we have
$$|I \cap P_i| \geq b-(m-1)\left(1+\frac{1}{2(m-1)}\right)\frac{b}{m} = \frac{b}{2m},$$
for every $1 \leq i \leq m$. Each of the $$\prod_{i=1}^{d+1} |I \cap P_i| \geq \left(\frac{b}{2m}\right)^{d+1}$$
$(d+1)$-element sets with one vertex from each $I \cap P_1,\ldots,I\cap P_{d+1}$ induces a closed simplex containing point $q$. Hence, the fraction of
$(d+1)$-element subsets of $I$ which induce a closed simplex that contains point $q$ is at least $$\left(\frac{b}{2m}\right)^{d+1}{b \choose d+1}^{-1} \geq (d+1)!\left(\frac{c_d'}{2}\right)^{d+1}.$$

According to part (2) of Lemma~\ref{lemmafor29}, for every  $1 \leq i \leq m$, fewer than $r$ partitions $\mathcal P_j$ have the property that at least $\beta \frac{n}{b}$ of their parts contain at least
$(\frac{|P_i|}{n}+\delta)b$ elements of $P_i$. Hence, the total number of $b$-element parts $I$ in all $t$ partitions, for which
 $|I \cap P_i| > (\frac{|P_i|}{n}+\delta)b$ for some  $1 \leq i \leq m$, is smaller than
$$mr\frac{n}{b}+mt\beta \frac{n}{b} = m\delta t\frac{n}{b}+m\beta t \frac{n}{b} \leq \frac{3}{4}\cdot t\cdot \frac{n}{b}.$$ Hence, the fraction of the $t\frac{n}{b}{b \choose d+1}$ hyperedges of $H_n$ that contain the point $q$ in this embedding is at least $\frac{1}{4}(d+1)!\left(\frac{c_d'}{2}\right)^{d+1}$.\qed

\section{Deterministic constructions using expander graphs}\label{sec:det}

In the next two subsections, we present deterministic constructions based on expander graphs, to provide alternative proofs of Theorem~\ref{29result} and Theorem~\ref{answer}. These proofs yield significantly better bounds on $k(\e)$ and $k_d$ in Theorem~\ref{29result} and Theorem~\ref{answer}, respectively. As in the previous section, the proof gives a nearly optimal bound in the plane, but not in higher dimension.

\subsection{Highly overlapping triple
systems---second proof of Theorem~\ref{29result}}\label{sec:sharp}

Fix integers $k,n\in \mathbb N$, with $n$ divisible by $6$, and  let $G=(\{1,\ldots,n\},E)$ be a
$k$-regular graph on the vertex set $\{1,\ldots,n\}$. Let
$k=\lambda_1\ge \lambda_2\ge \cdots \ge\lambda_n$ be the eigenvalues
of the adjacency matrix of $G$ in decreasing order, and write $\lambda=\max_{i\in
\{2,\ldots,n\}} |\lambda_i|$. For any $S,T\subseteq \{1,\ldots, n\}$
let $E(S,T)$ denote the number of ordered pairs $(i,j)\in S\times T$
such that $ij\in E$. The expander mixing lemma (see Corollary 9.2.5
in~\cite{AlSp}) states that
\begin{equation}\label{eq:mix}
\left|E(S,T)-\frac{k|S|\cdot |T|}{n}\right|\le \lambda\sqrt{|S|\cdot
|T|}.
\end{equation}

For every $i\in \{1,\ldots,n\}$ let $N_G(i)\stackrel{\mathrm{def}}{=}\{j\in \{1,\ldots,n\}:\ ij\in E\}$ denote its neighborhood in $G$.  Define a hypergraph $H$ on the vertex set $\{1,\ldots,n\}$ by letting  $E(H)$ consist of those triples
$\{i,j,\ell\}$ for which there exists $r\in \{1,\ldots,n\}$ such that $ir,jr,\ell r\in E$, i.e., $i,j,\ell\in N_G(r)$.  Assume from now on that the graph $G$ is quadrilateral-free. This implies that the hyperedges in $H$ corresponding to three vertices $i,j,\ell\in N_G(r)$ cannot arise from neighborhoods of vertices of $G$ other than $r$ itself. Hence the $3$-uniform hypergraph corresponding to $H$ is $k\binom{k-1}{2}$-regular and $|E(H)|= \binom{k}{3}n$.

Fix $\e,\delta\in (0,1)$.  Let $\{P_i\}_{i=1}^6$ be
a partition of $\{1,\ldots,n\}$ such that $|P_j|= \frac{n}{6}$ for
all $1\le j\le 6$. Write
$$
A_j=\left\{i\in \{1,\ldots,n\}:\ |N_G(i)\cap
P_j|<\frac{(1-\delta)k}{6}\right\}.
$$
Then, by definition, we have $E(A_j,P_j)< |A_j|\frac{(1-\delta)k}{6}$. An application
of~\eqref{eq:mix} yields the inequality:
\begin{equation*}\label{eq:A_j}
|A_j|\frac{(1-\delta)k}{6}\ge \frac{k|A_j|\cdot |P_j|}{n}-\lambda
\sqrt{|A_j|\cdot |P_j|}=\frac{k|A_j|}{6}-\lambda
\sqrt{\frac{n|A_j|}{6}},
\end{equation*}
which simplifies to
$$
|A_j|\le \frac{6\lambda^2n}{\delta^2k^2}.
$$
Thus, if we define
\begin{equation}\label{eq:def A}
A=\left\{i\in \{1,\ldots,n\}:\ |N_G(i)\cap
P_j|\ge\frac{(1-\delta)k}{6}\ \forall j\in \{1,\ldots,6\}\right\},
\end{equation}
then
\begin{equation}\label{eq:size A}A\ge n-\sum_{j=1}^6 |A_j|\ge
n\left(1-\frac{36\lambda^2}{\delta^2k^2}\right).
\end{equation}
We shall assume from now on that $\frac{36\lambda^2}{\delta^2k^2}<1$. We also note that for every $i\in A$ and $j\in \{1,\ldots, 6\}$ we have
\begin{equation}\label{eq:upper bound deduce}
|N_G(i)\cap
P_j|\le k-\sum_{r\in \{1,\ldots,6\}\setminus \{j\}}|N_G(i)\cap
P_r|\le k-5\frac{(1-\delta)k}{6}=\frac{(1+5\delta)k}{6}.
\end{equation}

Let $x_1,\ldots,x_n\in \R^2$ be an embedding of $\{1,\ldots,n\}$ in the plane. Let $S_1,\ldots,S_6$ be a partition of $\{x_1,\ldots, x_n\}$, as in the first proof of Theorem~\ref{29result}, which corresponds to the three concurrent lines from Lemma~\ref{lem:ceder}, whose common intersection point is $p\in \R^2$. We shall use the above reasoning (and notation) for the partition $P_1,\ldots,P_6$ of $\{1,\ldots,n\}$ given by $P_j=\{i\in \{1,\ldots,n\}:\ x_i\in S_i\}$.

Fix $i\in A$, where $A$ is as in~\eqref{eq:def A}. For every $(j_1,\ldots,j_6)\in \prod_{r=1}^6 (N_G(i)\cap P_{r})$ at least $8$ of the 20 triangles induced by the points $\{x_{j_1},\ldots,x_{j_6}\}$ contain $p$. By the definition of $A$, there are at least $\left(\frac{(1-\delta)k}{6}\right)^6$ such $6$-tuples, while, using~\eqref{eq:upper bound deduce}, each of these triangles that contains $p$ belongs to at most $\left(\frac{(1+5\delta)k}{6}\right)^3$ such $6$-tuples. Observe also that by the definition of $H$, since all of these triangles correspond to neighbors of $i$, their corresponding triples of indices belong to $E(H)$, and since $G$ is quadrilateral-free, they cannot arise from the above reasoning with $i$ replaced by any other vertex. Thus, the number of triangles that are images of hyperedges of $H$ and contain $p$ is at least
\begin{equation}\label{eq:before choices}
8\cdot \frac{\left(\frac{(1-\delta)k}{6}\right)^6}{\left(\frac{(1+5\delta)k}{6}\right)^3}\cdot |A|\stackrel{\eqref{eq:size A}}{\ge} \frac{(1-\delta)^6k^3}{27(1+5\delta)^3}n\left(1-\frac{36\lambda^2}{\delta^2k^2}\right)=
\left(1-O\left(\delta+\frac{\lambda^2}{\delta^2k^2}+\frac{1}{k}\right)\right)\cdot \frac29 \binom{k}{3}n.
\end{equation}

For arbitrarily large  $n$, we can choose the graph $G$ so that it
is quadrilateral-free and $\lambda\le 2\sqrt{k}$ (e.g., Ramanujan
graphs work---see~\cite{LPS88,HLW}). By choosing $\delta \approx \e$
and $k \approx \frac{1}{\e^{3}}$ in~\eqref{eq:before choices}, we
get that $p$ is in at least $\left(\frac{2}{9}-\e\right)|E(H)|$ of
the triangles in that are images of hyperedges of $H$. Note that the
degree of $H$ is $O(k^3)=O\left(\frac{1}{\e^9}\right)$. This proves
Theorem~\ref{29result} with the bound
$k(\e)=O\left(\frac{1}{\e^9}\right)$.\qed

\subsection{Higher dimensions---second proof of Theorem~\ref{answer}}\label{sec:high dim}
Here we shall use a variant of the construction in
Section~\ref{sec:sharp}, to give an alternative proof of
Theorem~\ref{answer}. We use the notation from
Section~\ref{sec:sharp}, and we assume that $k\ge d$. Fix $n$
vectors $x_1,\ldots,x_n\in \R^d$. Define a set of $d$-dimensional
simplices $H'$ whose vertices are in $\{x_1,\ldots,x_n\}$ by taking
the simplex whose vertices are the distinct vectors
$\{x_{j_1},x_{j_2},\ldots,x_{j_{d+1}}\}$ if and only if we have
$j_1j_2,j_2j_3,\ldots j_dj_{d+1}\in E$. In other words, the
simplices in $H'$ correspond to non-returning walks of length $d$ in
$G$. Thus, $|H'|\le k^{d}n$.

Let $P_1,\ldots, P_{d+1}\subseteq \{x_1,\ldots,x_n\}$ be the disjoint subsets from Theorem~\ref{pach}, i.e., $|P_i|\ge c_d'n$, and all the closed simplices with one vertex in each of the sets $\{P_1,\ldots,P_{d+1}\}$ have a point in common. Set $Q_i=\{j\in \{1,\ldots,n\}:\ x_j\in P_i\}$. Define  $\widetilde Q_{d+1}=Q_{d+1}$ and inductively for $i\in \{2,\ldots,d+1\}$,
$$
\widetilde Q_{i-1}=\left\{j\in  Q_{i-1}:\ \exists \ell\in \widetilde Q_i\ j\ell\in E\right\}.
$$
Then, by definition, there are no edges between $Q_{i-1}\setminus \widetilde Q_{i-1}$ and $\widetilde Q_i$. It follows from~\eqref{eq:mix} that
$$
\frac{k}{n}\left|Q_{i-1}\setminus \widetilde Q_{i-1}\right|\cdot \left|\widetilde Q_i\right|\le
\lambda \sqrt{\left|Q_{i-1}\setminus \widetilde Q_{i-1}\right|\cdot \left|\widetilde Q_i\right|}.
$$
Thus, we have
$$
\frac{\lambda^2n^2}{k^2}\ge \left(\left|Q_{i-1}\right|-\left| \widetilde Q_{i-1}\right|\right) \left|\widetilde Q_i\right|\ge  \left(c_d'n-\left| \widetilde Q_{i-1}\right|\right) \left|\widetilde Q_i\right|,
$$
or
\begin{equation}\label{eq:recursion}
\left| \widetilde Q_{i-1}\right|\ge c_d'n- \frac{\lambda^2n^2}{k^2\left|\widetilde Q_i\right|}.
\end{equation}

Assuming that $\lambda\le \frac{c_d'}{2}k$, inequality~\eqref{eq:recursion} implies by induction that for all $i\in \{1,\ldots,d+1\}$ we have $\left| \widetilde Q_{i-1}\right|\ge \frac{c_d'}{2}n$ (for $i=d+1$ this follows from our assumption, arising from Theorem~\ref{pach}, on the cardinality of $P_{d+1}$). Thus, $\left| \widetilde Q_1\right|\ge \frac{c_d'}{2}n$, and by construction any point $j\in \widetilde Q_1$ can be completed to a walk in $G$ of length $d$ whose $i$th vertex is in $Q_i$. Each such walk corresponds to a simplex in $H'$, and by Theorem~\ref{pach}, all of these simplices have a common point. Thus, the number of simplices in $H'$ which have a common point is at least $\frac{c_d'}{2}n\ge \frac{c_d'}{2k^{d}}|H'|$. Since there exist arbitrarily large graphs $G$ with $\lambda\le \frac{c_d'}{2}k$ and $k\le k_d$ (e.g., for Ramanujan graphs we can take $k_d\approx \frac{1}{(c_d')^2}$), this completes our deterministic proof of Theorem~\ref{answer}.\qed

\section{Finite quotients of buildings}\label{sec:building}

Let $F$ be a non-archimedean local  field, $\mathcal O_{F}$ its ring of integers, $\pi_{F}$ a uniformizer and  $q=|\mathcal O_{F}/\pi_{F}\mathcal O_{F}|$  the cardinality of the residue field of $F$. For example,  we may take $F=\mathbb Q_{p}$,
$\mathcal O_{F}=\mathbb Z_{p}$, $\pi_{F}=p$ and $q=p$. We assume below that $q$ is odd.

Let $r\geq 3$ be an integer and $G=PGL_{r}(F)$. Now $r$ is fixed but $F$ will be chosen such that $q$ is big enough. We recall that $K= PGL_{r}(\mathcal O_{F})$  is a maximal compact subgroup of $G$.
We also recall that $G/K$ is the set of vertices of a building, and is also equal to the set of  lattices in $F^{r}$ up to homothety (a lattice in $F^{r}$ is a free $\mathcal O_{F}$-submodule of rank $r$ and a homothety is the multiplication by an element of the multiplicative group $F^{\times}$). We refer to~\cite{steger} for an elementary introduction to the building of $PGL_{r}(F)$ (we will not use the definition of a building below---all simplicial complexes will be defined explicitly).
 We have  the map
$$\mathrm{type} : G/K\to \mathbb Z/r \mathbb Z$$ such that if $x\in G/K$ is the homothety class of a  lattice $M\subseteq F^{r}$ and  $\mathrm{det}(M)=\pi_{F}^{a}\mathcal O_{F}^{\times}$ with $a\in \mathbb Z$, then $\mathrm{type}(x)=a\text{  mod  }r\mathbb Z$.
We denote by  $\mathrm{vdet}(\cdot)$ the composition
$$G\xrightarrow{\mathrm{det}} F^{\times}/(F^{\times})^r\xrightarrow{\mathrm{valuation}}\mathbb Z/r \mathbb Z,$$
and for $i\in  \mathbb Z/r \mathbb Z$ we write $G_{i}=\mathrm{vdet}^{-1}(\{i\})$. Thus, $G_{0}$ is a subgroup of index $r$ in $G$ and the $G_{i}$ are the left and right cosets for $G_{0}$ in $G$. We remark  that $K\subseteq G_{0}$.
 For $g\in G_{i}$ and $x\in G/K$ we have $ \mathrm{type}(gx)=i+\mathrm{type}(x)$. Moreover, $G_{i}/K$ is the subset of $G/K$ of vertices of type $i$.

Let $\Lambda=\{(\lambda_{1},...,\lambda_{r})\in \mathbb  Z^{r}/\mathbb Z(1,...,1):\  \lambda_{1}\leq ... \leq \lambda_{r}\}$ be the set of dominant coweights of $PGL_{r}(F)$.
For $(\lambda_{1},...,\lambda_{r})\in \Lambda$ we write $$D(\lambda_{1},...,\lambda_{r})=
\begin{pmatrix} \pi_{F}^{\lambda_{1}} & 0 & \dots& \dots&0 \\
  0 & \pi_{F}^{\lambda_{2}}& \ddots& \ddots & \vdots\\
  \vdots & \ddots & \pi_{F}^{\lambda_{3}}  & \ddots & \vdots\\
            \vdots & \ddots & \ddots& \ddots &0\\
              0 & \dots & \dots &0&\pi_{F}^{\lambda_{r}}
                       \end{pmatrix}\in G.$$
Then the mapping $\delta\stackrel{\mathrm{def}}{=} (\lambda_{1},...,\lambda_{r})\mapsto KD(\lambda_{1},...,\lambda_{r}) K$ is a bijection from $\Lambda$ to $K\backslash G/K$. For $x,y\in G/K$ we have $x^{-1}y\in K\backslash G/K$, and we define the relative position of $x$ and $y$ in the building as $\sigma(x,y)=\delta^{-1}(x^{-1}y)\in \Lambda$. In other words, for $(\lambda_{1},...,\lambda_{r})\in \Lambda$ we have $\sigma(x,y)=(\lambda_{1},...,\lambda_{r})$ if and only if there exists a basis $(e_{1},...,e_{r})$ of $F^{r}$ such that $x$ is the homothety class of
$\mathcal O_{F}e_{1}+...+\mathcal O_{F}e_{r}$
and $y$ is the homothety class of
$\pi_{F}^{\lambda_{1}}\mathcal O_{F}e_{1}+...+\pi_{F}^{\lambda_{r}}\mathcal O_{F}e_{r}$. The inverse image of $K\backslash G_{0}/K$  under $\delta $ is
$$\Lambda_0=\left\{(\lambda_{1},...,\lambda_{r})\in \mathbb Z^{r}
/\mathbb Z(1,...,1)
: \lambda_{1}\leq ... \leq \lambda_{r}\ \wedge\  \sum_{i=1}^{r}\lambda_{i}=0 \text{ in } \mathbb Z/r\mathbb Z\right\}\subseteq \Lambda,$$
which is also the set of dominant coweights of $SL_{r}(F)$.

We recall that for $d\in \{0,...,r-1\}$ a $d$-dimensional face of the building is a $(d+1)$-tuple $\{y_{0},...,y_{d}\}\subseteq G/K$ such that there are lattices $M_{0},...,M_{d}$ in $F^{r}$ satisfying
\begin{enumerate}
\item
$M_{0}\subsetneq M_{1} \subsetneq ... \subsetneq  M_{d} \subsetneq  \pi_{F}^{-1}M_{0},$
\item
 $y_{0},...,y_{d}$ are the homothety classes of $M_{0},...,M_{d}$.
 \end{enumerate}

The type of a face $\{y_{0},...,y_{d}\}$ is $\{\mathrm{type}(y_{0}),...,\mathrm{type}(y_{d})\}\subseteq \mathbb Z/r \mathbb Z$. For any non-empty  subset $I\subseteq \mathbb Z/r \mathbb Z$ we denote by $Y_{I}$ the set of faces of type $I$ in the building. More precisely, we denote by $Y_{I}$ the set of families $(y_{i})_{i\in I}$ with $y_{i}\in G_{i}/K$ such that $\{y_{i}:\  i\in I\}$ is a $(|I|-1)$-dimensional face of the building.

Let $G^{+}\subseteq G$ be the
subgroup generated by unipotent elements of $G$. Then $G^{+}$ is also the image of $SL_{r}(F)$ in $G=PGL_{r}(F)$, and $G^{+}\subseteq G_{0}$. We have an exact sequence $$1\to G^{+}\to G \xrightarrow{\mathrm{det}} F^{\times}/(F^{\times})^r\to 1.$$
  We will apply  Theorem 1.1 of~\cite{hee-oh} (a quantitative form of Kazhdan's property (T))  to the strongly orthogonal system
 of the set of roots of $G=PGL_{r}(F)$ consisting of the single root $e_{1}-e_{r}$ (where $e_{i}-e_{j}$ stands for the root corresponding to the character of the maximal torus of diagonal matrices which sends
 $D(\lambda_{1},...,\lambda_{r})\in G$ to $\pi_{F}^{\lambda_{i}-\lambda_{j}}$, for
 $(\lambda_{1},...,\lambda_{r})\in \Lambda$).
 By this theorem,   for any unitary representation $(H,\pi)$ of $G$ without a  nonzero $G^{+}$-invariant vector, for any     $(\lambda_{1},...,\lambda_{r})\in \Lambda$
 and for any $K$-invariant vectors $\xi,\eta\in H$ we have
 \begin{gather}\label{est-hee-oh0}
 \big|\langle \eta, \pi(D(\lambda_{1},...,\lambda_{r}))\xi\rangle\big|\leq \frac{(\lambda_{r}-\lambda_{1})(q-1)+(q+1)}{q^{\frac{1}{2}(\lambda_{r}-\lambda_{1})}(q+1)}
 \|\xi\|\cdot \|\eta\|
.\end{gather}
  For any nonzero
 $(\lambda_{1},...,\lambda_{r})\in \Lambda_{0}$ we have $\lambda_{r}\geq \lambda_{1}+2$ and therefore  inequality \eqref{est-hee-oh0}
 implies
 \begin{gather}\label{est-hee-oh}
 \big|\langle \eta, \pi(D(\lambda_{1},...,\lambda_{r}))\xi\rangle\big| \leq \frac{3}{q}\|\xi\|\cdot \|\eta\| \text{ \ for any nonzero \ }
 (\lambda_{1},...,\lambda_{r})\in \Lambda_{0}.\end{gather}
 Note that in  \eqref{est-hee-oh0} we did not use  the full strength of Theorem 1.1 of~\cite{hee-oh} because we used the very poor strongly orthogonal system
 $\{e_{1}-e_{r}\}$, whereas~\cite{hee-oh} uses the maximal  strongly orthogonal system
 $\left\{e_{1}-e_{r}, e_{2}-e_{r-1},...,e_{\lfloor \frac{r}{2}\rfloor}-e_{r+1-\lfloor \frac{r}{2}\rfloor}\right\}$ to get optimal bounds. However, the optimal bounds do not give in~\eqref{est-hee-oh} an exponent of $q$  better than $-1$  for $
 (\lambda_{1},...,\lambda_{r})=(-1,0,...,0,1)$.

Let $\Gamma$ be a  cocompact lattice in $G_{0}$ satisfying the condition
\begin{itemize}
\item[(C)] for any $x\in G/K$ and any $\gamma\in \Gamma$ different from $1$, the distance from $x$ to $\gamma x$ along the $1$-skeleton of $G/K$ is $>2$.
\end{itemize}

The quotient  $X=\Gamma\backslash G/K$ is the set of vertices of a  simplicial complex
whose faces are the quotient by $\Gamma $ of the faces of the building $G/K$. Since  $\Gamma\subseteq G_{0}$, we have an obvious  type function $\mathrm{type}:X\to \mathbb Z/r \mathbb Z$,  and thanks to the condition (C), $X$ has no multiple edges.
  For any non-empty subset $I\subseteq \mathbb Z/r \mathbb Z$ we  write $X_{I}=\Gamma\backslash Y_{I}$ and we identify $X_{I}$ with the set of faces of type $I$ in $X$.
We note that for $i\in \mathbb Z/r \mathbb Z$, $X_{\{i
\}}\stackrel{\mathrm{def}}{=}\Gamma\backslash  G_{i}/K$ is the subset of vertices of type $i$ in $X$ and is of cardinality $\frac{1}{r}|X|$.


  For  any   $(\lambda_{1},...,\lambda_{r})\in \Lambda$ we have a normalized Hecke operator $H_{\lambda_{1},...,\lambda_{r}}$ acting on $\ell^{2}(\Gamma\backslash G/K)$ in the following way: we view elements of $\ell^{2}(\Gamma\backslash G/K)$ as $\Gamma$-invariant functions on $G/K$ and
$H_{\lambda_{1},...,\lambda_{r}}(f)(x)$ is the average of the values of $f$ on the  vertices $y\in G/K$  which satisfy $\sigma(x,y)=(\lambda_{1},...,\lambda_{r})$.
For $\xi,\eta\in \ell^{2}(\Gamma\backslash G/K)$ considered as $K$-invariants vectors in $L^{2}(\Gamma\backslash G)$, endowed with the representation $\pi$ of $G$ by right translations,
we have \begin{gather}
\label{hecke-matrice}\langle \eta, H_{\lambda_{1},...,\lambda_{r}} \xi \rangle =
\langle \eta, \pi(D(\lambda_{1},...,\lambda_{r}))\xi\rangle.\end{gather}

The subspace  of $G^{+}$-invariant vectors in $L^{2}(\Gamma\backslash G)$ is $L^{2}(\Gamma\backslash G)^{G^{+}}=L^{2}(F^{\times}/(F^{\times})^r\det(\Gamma))$ because $G/G^{+}=F^{\times}/(F^{\times})^r$ is abelian. Let $H$ be the orthogonal complement of $L^{2}(\Gamma\backslash G)^{G^{+}}$ in $L^{2}(\Gamma\backslash G)$. The representation $(H,\pi)$ does not have any nonzero $G^{+}$-invariant vector and we will apply inequality \eqref{est-hee-oh} to it. For any set $Z$ endowed with  a finite measure  we write
 $L^{2}_{0}(Z)$ for the hyperplane of $L^{2}(Z)$ which is orthogonal to the constant function $1$.
For $(\lambda_{1},...,\lambda_{r})\in \Lambda_{0}$, $H_{\lambda_{1},...,\lambda_{r}}$ acts diagonally on the direct sum decomposition $\ell^{2}(\Gamma\backslash G/K)=\bigoplus_{i\in \mathbb Z/r\mathbb Z}\ell^{2}(\Gamma\backslash G_{i}/K)$ and
the estimate
\eqref{est-hee-oh}, together with \eqref{hecke-matrice},
implies  the operator norm bound
 \begin{gather}\label{est-hee-oh2}
 \|H_{\lambda_{1},...,\lambda_{r}}\|_{\mathcal L\left(\ell^{2}_{0}(\Gamma\backslash G_{i}/K)\right)}
 \leq \frac{3}{q}, \end{gather}
 for any $i\in \mathbb Z/r \mathbb Z$ and
for any  nonzero $(\lambda_{1},...,\lambda_{r})\in \Lambda_{0}$.
To justify \eqref{est-hee-oh2} it remains to check that for any
 $i\in \mathbb Z/r \mathbb Z$ and any $f\in \ell^{2}_{0}\left(\Gamma\backslash G_{i}/K\right)$, the extension of $f$ by $0$ to $\Gamma\backslash G/K$, considered as an element of $L^{2}(\Gamma\backslash G)$, belongs to $H$, i.e., is orthogonal to $L^{2}(\Gamma\backslash G)^{G^{+}}$. Indeed  $G^{+}K=G_{0}$, since we have $\det(K)=\mathcal O_{F}^{\times}/(\mathcal O_{F}^{\times})^{r}$ and  an exact sequence
 $$1\to \mathcal O_{F}^{\times}/(\mathcal O_{F}^{\times})^{r}\to F^{\times}/(F^{\times})^{r}\to \mathbb Z/r \mathbb Z\to 0.$$

\begin{proposition}\label{prop-alpha}  Let $I\subseteq \mathbb Z/r \mathbb Z$ be a non-empty set. For $i\in I$, fix $\alpha_{i}>0$ and a subset $Z_{i}\subseteq X_{\{i\}}$ of cardinality $\geq \alpha_{i} \left|X_{\{i\}}\right|$. Then  the proportion of elements of $(x_{i})_{i\in I}\in
 X_{I}$ satisfying $x_{i} \in Z_{i}$ for every $i\in I$  is at least
$$
\prod_{i\in I}\alpha_{i}-\frac{2(|I|-1)}{\sqrt{q}}.
$$
\end{proposition}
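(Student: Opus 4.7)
The plan is to argue by induction on $|I|$. The base case $|I|=1$ follows immediately from the hypothesis $|Z_{i}|\geq\alpha_{i}|X_{\{i\}}|$, since the error term $2(|I|-1)/\sqrt{q}$ then vanishes. For the inductive step, fix $i_{0}\in I$, set $I'=I\setminus\{i_{0}\}$, and let $F_{I'}\subseteq X_{I'}$ denote the set of tuples $(x_{i})_{i\in I'}$ with $x_{i}\in Z_{i}$ for every $i\in I'$. Because the affine building of $PGL_{r}(F)$ is a flag complex, an element of $X_{I}$ is the same datum as a pair consisting of a face $(x_{i})_{i\in I'}\in X_{I'}$ and a type-$i_{0}$ vertex $y$ adjacent (in the quotient complex) to each vertex of that face. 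I would introduce the normalized averaging operator $T\colon L^{2}(X_{\{i_{0}\}},\mu_{i_{0}})\to L^{2}(X_{I'},\mu_{I'})$ with respect to the uniform probability measures, defined by $(Tf)((x_{i}))=L^{-1}\sum_{y}f(y)$, where $y$ ranges over the type-$i_{0}$ neighbors of $(x_{i})$ and $L$ is the constant number of such neighbors.

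Setting $\beta_{i_{0}}=|Z_{i_{0}}|/|X_{\{i_{0}\}}|$ and $\gamma=|F_{I'}|/|X_{I'}|$, the fraction of good $I$-tuples equals $\langle\mathbf{1}_{F_{I'}},T\mathbf{1}_{Z_{i_{0}}}\rangle_{L^{2}(\mu_{I'})}$. Decomposing $\mathbf{1}_{Z_{i_{0}}}=\beta_{i_{0}}\mathbf{1}+g$ and $\mathbf{1}_{F_{I'}}=\gamma\mathbf{1}+h$ into constant plus mean-zero parts, and using that $T\mathbf{1}=\mathbf{1}$ while $T$ sends mean-zero functions to mean-zero functions, the two cross terms vanish and the inner product becomes $\gamma\beta_{i_{0}}+\langle h,Tg\rangle$. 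Since $\|h\|_{2}^{2}=\gamma(1-\gamma)\leq 1$ and $\|g\|_{2}^{2}=\beta_{i_{0}}(1-\beta_{i_{0}})\leq 1$, Cauchy--Schwarz gives $|\langle h,Tg\rangle|\leq\|T\|_{L^{2}_{0}\to L^{2}_{0}}$. Combining with the induction hypothesis $\gamma\geq\prod_{i\in I'}\alpha_{i}-2(|I'|-1)/\sqrt{q}$ and the inequalities $\beta_{i_{0}}\geq\alpha_{i_{0}}$ and $\alpha_{i}\leq 1$, one checks (with the case where the inductive lower bound is negative handled trivially, since the fraction is non-negative) that the step closes provided $\|T\|_{L^{2}_{0}}\leq 2/\sqrt{q}$, producing the advertised total error $2(|I|-1)/\sqrt{q}$.

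The main obstacle is thus establishing this spectral bound on $T$. Since $T^{*}T$ is $\Gamma$-equivariant and bi-$K$-invariant on $\ell^{2}(\Gamma\backslash G_{i_{0}}/K)$, it admits a Hecke expansion $T^{*}T=\sum_{\lambda\in\Lambda_{0}}c_{\lambda}H_{\lambda}$ with non-negative coefficients satisfying $\sum_{\lambda}c_{\lambda}=1$ (from $T^{*}T\mathbf{1}=\mathbf{1}$). The identity coefficient is $c_{0}=1/L$, which measures the probability that two independent type-$i_{0}$ extensions of a random $I'$-face coincide; since the smallest link of a simplex in the affine building of $PGL_{r}(F)$ is a projective line with $q+1$ points, we have $L\geq q+1$, so $c_{0}\leq 1/q$. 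For every nonzero $\lambda\in\Lambda_{0}$ the Hecke-operator estimate \eqref{est-hee-oh2} yields $\|H_{\lambda}\|_{\ell^{2}_{0}}\leq 3/q$. Putting these together gives $\|T^{*}T\|_{L^{2}_{0}}\leq c_{0}+(1-c_{0})(3/q)\leq 4/q$, hence $\|T\|_{L^{2}_{0}}\leq 2/\sqrt{q}$, which closes the induction.
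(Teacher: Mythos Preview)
Your proof is correct and follows essentially the same approach as the paper: induction on $|I|$, with the inductive step proved via the averaging operator $T\colon\ell^{2}(X_{\{i_{0}\}})\to\ell^{2}(X_{I'})$, the expander-mixing decomposition, and the bound $\|T_{0}\|\le 2/\sqrt{q}$ obtained from the Hecke expansion of $T^{*}T$ together with~\eqref{est-hee-oh2}. The paper packages the inductive step as a separate lemma (Lemma~\ref{lem-alpha-beta}) and identifies $L$ explicitly as the Grassmannian count $|\mathrm{Gr}(\tilde{i}-\tilde{i}_{-},\tilde{i}_{+}-\tilde{i}_{-})(\mathbb{F}_{q})|$, but your appeal to the minimal link being a projective line gives the same bound $L\ge q+1$, and the remaining steps are identical.
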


\begin{proof}  Proposition~\ref{prop-alpha} is obvious for $|I|=1$, and  it follows by induction on $|I|$ due to the following lemma.
 \end{proof}


\begin{lemma}\label{lem-alpha-beta}  Fix $I\subseteq \mathbb Z/r \mathbb Z$ of cardinality $\geq 2$, and $i\in I$. Write $I'=I\setminus \{i\}$. For $\alpha,\alpha'>0$,  $Z\subseteq X_{\{i\}}$ of cardinality $\geq \alpha \left|X_{\{i\}}\right|$ and
$Z'\subseteq X_{I'}$ of cardinality $\geq \alpha' \left|X_{I'}\right|$,  the proportion of $(x_{j})_{j\in I}\in
 X_{I}$ satisfying   $x_{i}\in Z$  and $(x_{j})_{j\in I'}\in
 Z'$  is at least
 $$
 \alpha\alpha'-\frac{2}{\sqrt{q}}.
 $$
\end{lemma}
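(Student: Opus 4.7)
The plan is to recognise the lemma as a bipartite expander-mixing statement whose spectral gap is supplied by the Hecke estimate \eqref{est-hee-oh2}. Consider the bipartite graph $B$ with parts $X_{\{i\}}$ and $X_{I'}$, where $x_i \sim (x_j)_{j\in I'}$ iff the combined tuple lies in $X_I$. By transitivity of $G$ on faces of a fixed type, $B$ is biregular with degrees $d_i = |X_I|/|X_{\{i\}}|$ on one side and $d_{I'} = |X_I|/|X_{I'}|$ on the other; the quantity to estimate is just $e(Z,Z')/|X_I|$, where $e(Z,Z')$ is the edge count between $Z$ and $Z'$. Writing $A$ for the bipartite adjacency, decomposing $\mathbf{1}_Z = \alpha\mathbf{1}_{X_{\{i\}}} + f_0$ and $\mathbf{1}_{Z'} = \alpha'\mathbf{1}_{X_{I'}} + g_0$ into constant and mean-zero parts, biregularity kills the mixed terms so
$$e(Z,Z') = \alpha\alpha'|X_I| + \langle A g_0, f_0\rangle,\qquad |\langle A g_0, f_0\rangle| \leq \|A|_{\ell^2_0}\|\,\sqrt{|Z||Z'|}.$$
Dividing by $|X_I| = \sqrt{d_id_{I'}|X_{\{i\}}||X_{I'}|}$ and using $|Z|\leq|X_{\{i\}}|$, $|Z'|\leq|X_{I'}|$, it suffices to establish the operator-norm bound $\|AA^*|_{\ell^2_0(X_{\{i\}})}\| \leq 4d_id_{I'}/q$.

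For this, the key observation is that the matrix entry $(AA^*)(x,y)$, which counts common $I'$-extensions of $x,y$ in $X_{I'}$, only depends on the relative position $\sigma(x,y)\in\Lambda_0$ of the type-$i$ pair $(x,y)$, since the building's face structure is $G$-invariant. Condition (C) is what makes this unambiguous in the quotient: a shared $I'$-extension forces lifts $\tilde x,\tilde y$ to lie within distance $2$ in the building, and (C) ensures that at most one $\Gamma$-translate realises this proximity. We therefore get a decomposition
$$AA^* \;=\; \sum_{\lambda \in \Lambda_0} m(\lambda)\,T_\lambda,\qquad T_\lambda = N_\lambda H_\lambda,$$
with $m(\lambda)\geq 0$ the shared-extension count at relative position $\lambda$. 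The diagonal contributes $m(0)T_0 = d_i\cdot\mathrm{Id}$, while evaluating on $\mathbf{1}$ yields $\sum_\lambda m(\lambda)N_\lambda = d_id_{I'}$. Applying \eqref{est-hee-oh2} to each $H_\lambda$ with $\lambda\neq 0$ gives
$$\|AA^*|_{\ell^2_0}\| \;\leq\; d_i + \frac{3}{q}\sum_{\lambda\neq 0}m(\lambda)N_\lambda \;\leq\; d_i + \frac{3d_id_{I'}}{q}.$$

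The remaining inequality $d_i + 3d_id_{I'}/q \leq 4d_id_{I'}/q$ is equivalent to $d_{I'} \geq q-3$, and I would establish the stronger $d_{I'}\geq q+1$ directly from the lattice picture: a face of type $I'$ is a cyclic chain $M_{j_1}\subsetneq\cdots\subsetneq M_{j_s}\subsetneq \pi_F^{-1}M_{j_1}$, and extending by a type-$i$ vertex amounts to inserting a sublattice in the unique cyclic gap of $I'$ containing $i$. The insertion count is the Gaussian binomial $\binom{j^+-j^-}{i-j^-}_q$, where $j^\pm$ are the cyclic neighbours of $i$ in $I$; since $j^+-j^-\geq 2$ and $1\leq i-j^-\leq j^+-j^--1$, this is at least $\binom{2}{1}_q = q+1$. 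The main technical obstacle is the Hecke decomposition step in the quotient: one must verify that (C) really does prevent multiple $\Gamma$-translates of a lift of $y$ from lying within the distance-$2$ ball around a lift of $x$, so that $(AA^*)(x,y)$ is a clean function of $\sigma(x,y)$ alone. Once that is in place, what remains is the standard bipartite expander mixing lemma together with the quantitative property-(T) estimate \eqref{est-hee-oh2}.
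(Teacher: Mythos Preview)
Your argument is correct and is essentially the paper's own proof in a different normalisation: the paper works with the averaging operator $T$ (so that $T^*T=(d_id_{I'})^{-1}AA^*$ under the probability inner product) rather than the raw adjacency, but the Hecke decomposition of this operator on $\ell^2_0(X_{\{i\}})$, the bound on the identity coefficient via the Grassmannian count $d_{I'}=\bigl|\mathrm{Gr}(\tilde i-\tilde i_-,\tilde i_+-\tilde i_-)(\mathbb F_q)\bigr|$, and the appeal to \eqref{est-hee-oh2} are identical. One minor arithmetic slip: the inequality $d_i+3d_id_{I'}/q\le 4d_id_{I'}/q$ is equivalent to $d_{I'}\ge q$, not $d_{I'}\ge q-3$, but your bound $d_{I'}\ge q+1$ still closes the argument.
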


\begin{proof} Let $T:\ell^{2}(X_{\{i\}})\to \ell^{2}(X_{I'})$ be the following averaging operator: for any $f\in \ell^{2}\left(X_{\{i\}}\right)$ and $(x_{j})_{j\in I'}\in
X_{I'}$, $T(f)((x_{j})_{j\in I'})$ is the average of $f(x_{i})$ over $x_{i}\in X_{\{i\}}$  such that  $(x_{j})_{j\in I}$ belongs to $X_{I}$.
We normalize the norms of $\ell^{2}(X_{\{i\}})$ and $\ell^{2}(X_{I'})$ such that
the constant function $1$ has norm $1$.
We denote by $T_{0}: \ell^{2}_{0}(X_{\{i\}})\to \ell^{2}_{0}(X_{I'})$ the restriction of $T$ to the hyperplane orthogonal to the constant function $1$.

Note that $T^{*}T: \ell^{2}(X_{\{i\}})\to \ell^{2}(X_{\{i\}})$ is also an averaging operator, and in fact it is an average of the Hecke operators $H_{\lambda_{1},...,\lambda_{r}}$ for  $(\lambda_{1},...,\lambda_{r})\in \Lambda_{0}$ such that $\lambda_{r}\leq\lambda_{1}+ 2$.
In this average, the coefficient of $H_{0,...,0}=\mathrm{Id}$ is $\leq q^{-1}$.
Indeed, let $\widetilde i\in \mathbb Z$ be a lifting of $i$ and let $\widetilde i_{-}, \widetilde i_{+}$ be the biggest integer $<\widetilde i$ (resp. the smallest integer $>\widetilde i$) whose images $i_{-},i_{+}$ belong to $I'$. Then
 for any $(x_{j})_{j\in I'}\in
X_{I'}$, the number of $x_{i}\in X_{\{i\}}$ such that $(x_{j})_{j\in I}$ belongs to $X_{I}$ is exactly $\left|\mathrm{Gr}\left(\widetilde i-\widetilde i_{-},\widetilde i_{+}-\widetilde i_{-} \right)(\mathbb F_{q})\right|$, the number of sub-$\mathbb F_{q}$-vector spaces of dimension $\widetilde i-\widetilde i_{-}$ in $\mathbb F_{q}^{\widetilde i_{+}-\widetilde i_{-}}$. The coefficient of $H_{0,...,0}$ is
$\left|\mathrm{Gr}\left(\widetilde i-\widetilde i_{-},\widetilde i_{+}-\widetilde i_{-} \right)(\mathbb F_{q})\right|^{-1}$, and it is clear that $\left|\mathrm{Gr}\left(\widetilde i-\widetilde i_{-},\widetilde i_{+}-\widetilde i_{-} \right)(\mathbb F_{q})\right|\geq q$.
By~\eqref{est-hee-oh2} we have  $$\left\|T_{0}^{*}T_{0}\right\|_{\mathcal{L}\left(\ell^{2}_{0}(X_{\{i\}})\right)}\leq \frac{1}{q}+\frac{3}{q}=\frac{4}{q}, $$ implying that $\|T_{0}\|_{\mathcal{L}\left(\ell^{2}_{0}(X_{\{i\}}),\ell^{2}_{0}(X_{I'})\right)}\leq  2 q^{-\frac{1}{2}} $. Therefore \begin{gather}\label{est-hee-oh3}\left|\left\langle T(\mathbf{1}_{Z}),\mathbf{1}_{Z'}\right\rangle- \frac{|Z|\cdot|Z'|}{\left|X_{\{i\}}\right|\cdot |X_{I'}|}\right|\leq 2 q^{-\frac{1}{2}}\sqrt{ \frac{|Z|\cdot|Z'|}{\left|X_{\{i\}}\right|\cdot |X_{I'}|}}\leq 2 q^{-\frac{1}{2}},\end{gather}  completing the proof of Lemma~\ref{lem-alpha-beta}.
\end{proof}

 \begin{remark}\label{rem:ramanujan}
 If $X=\Gamma\backslash G/K$ is assumed to be a Ramanujan complex (see~\cite{LSV05}) we can improve the estimate  in~\eqref{est-hee-oh2},  but at the end, in~\eqref{est-hee-oh3}, we get the same exponent $-\frac{1}{2}$.
\end{remark}

Proposition~\ref{prop-alpha} has the following corollary.

\begin{corollary}\label{cor-alpha}
Let $I\subseteq \mathbb Z/r \mathbb Z$ be of cardinality $\geq 2$.
Let $\mathcal X_{I}$ be the $|I|$-uniform hypergraph with vertices $X=\Gamma\backslash G/K$ and with hyperedges the set $X_{I}$ of faces of $X$ of type $I$. Then for $q$ large enough (as a function of $|I|$ alone) the overlap parameter of $\mathcal X_{I}$ is bounded
below by a positive constant depending only on $|I|$.
\end{corollary}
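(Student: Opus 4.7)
The plan is to deduce Corollary \ref{cor-alpha} from Proposition \ref{prop-alpha} combined with a colored analogue of Theorem \ref{pach}. Fix any embedding $f\colon X \to \R^{|I|-1}$; the goal is to exhibit a point $p \in \R^{|I|-1}$ covered by a positive fraction (depending only on $|I|$) of the embedded hyperedges of $\mathcal{X}_I$.

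The key geometric input I would invoke is the following colored version of Theorem \ref{pach}: there exists a constant $\alpha = \alpha(|I|) > 0$ such that for any $|I|$ finite point sets $S_i \subseteq \R^{|I|-1}$ (one per color $i \in I$), there exist subsets $T_i \subseteq S_i$ with $|T_i| \geq \alpha|S_i|$ and a point $p$ lying in every transversal simplex having one vertex in each $T_i$. Applied to $S_i = f(X_{\{i\}})$, this produces subsets $Z_i \subseteq X_{\{i\}}$ with $|Z_i| \geq \alpha|X_{\{i\}}|$ and a common point $p \in \R^{|I|-1}$ lying in every transversal simplex from $\prod_{i \in I} f(Z_i)$.

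With the $Z_i$'s in hand, Proposition \ref{prop-alpha} applied with $\alpha_i = \alpha$ for all $i \in I$ yields
\[
\frac{\bigl|\{(x_i)_{i \in I} \in X_I : x_i \in Z_i \text{ for all } i\}\bigr|}{|X_I|} \;\geq\; \alpha^{|I|} - \frac{2(|I|-1)}{\sqrt{q}}.
\]
Every such hyperedge has its embedded image containing $p$ by the defining property of the $Z_i$'s. Choosing $q$ large enough in terms of $|I|$, explicitly $q \geq 16(|I|-1)^2/\alpha^{2|I|}$, forces the right-hand side to be at least $\alpha^{|I|}/2 > 0$, and hence $c(\mathcal{X}_I) \geq \alpha^{|I|}/2$, a positive constant depending only on $|I|$.

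The main obstacle is establishing the colored version of Theorem \ref{pach} invoked above. Applying Theorem \ref{pach} directly to the disjoint union $\bigsqcup_{i \in I} S_i$ yields $|I|$ disjoint subsets $P_1, \ldots, P_{|I|}$ with the common-point property, but these must be realigned with the original colors via a bijection $\sigma\colon I \to \{1,\ldots,|I|\}$ to produce $T_i := S_i \cap P_{\sigma(i)}$. A direct Hall's theorem argument to find $\sigma$ with every $|T_i| \geq \alpha|S_i|$ fails in the typical regime $c'_{|I|-1} \leq 1/|I|$. The resolution I would take is to average over all bijections: the average fraction of well-placed hyperedges equals $\mathrm{perm}(M)/|I|!$ where $M_{ij} = |P_j \cap S_i|/|S_i|$ has row sums $\leq 1$ and column sums $\geq c'_{|I|-1}|I|$, and the Van der Waerden--Egorychev--Falikman permanent bound (after scaling to the doubly stochastic case) provides a lower bound $\alpha(|I|) > 0$, from which a good $\sigma$---and hence the required $T_i$'s---emerges.
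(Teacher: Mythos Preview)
Your overall strategy---invoke a colored version of Theorem~\ref{pach} to obtain the sets $Z_i$, then feed these into Proposition~\ref{prop-alpha} with $\alpha_i=\alpha$ and choose $q$ large enough that $\alpha^{|I|}-2(|I|-1)/\sqrt{q}\ge \alpha^{|I|}/2$---is exactly what the paper does. The only difference is that the paper does not attempt to derive the colored statement from the monochromatic Theorem~\ref{pach}; it simply cites it as ``the main result of~\cite{Pa} (which is a strengthening of Theorem~\ref{pach})'', obtaining directly sets $P_i\subseteq f(X_{\{i\}})$ with $|P_i|\ge c''_{|I|-1}|X_{\{i\}}|$ whose transversal simplices share a point. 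So the core argument is the same, and your threshold $q\ge 16(|I|-1)^2/\alpha^{2|I|}$ and lower bound $c(\mathcal X_I)\ge \alpha^{|I|}/2$ coincide with the paper's.

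The part that does not go through is your proposed derivation of the colored Pach theorem from the monochromatic one via a permanent argument. The matrix $M$ with $M_{ij}=|P_j\cap S_i|/|S_i|$ indeed has column sums $\ge c'_{|I|-1}|I|$ when the $|S_i|$ are equal, but there is no lower bound on the row sums, so one cannot rescale to a doubly stochastic matrix and the Van der Waerden--Egorychev--Falikman bound gives nothing. Concretely, take $|I|=3$, $|S_i|=N$, $c'_{|I|-1}=1/6$; Theorem~\ref{pach} applied to the $3N$-point set $\bigsqcup S_i$ could perfectly well return disjoint sets $P_1,P_2\subseteq S_1$ and $P_3\subseteq S_2$, each of size $N/2$. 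Then $S_3\cap P_j=\emptyset$ for every $j$, the third row of $M$ is zero, $\mathrm{perm}(M)=0$, and no bijection $\sigma$ yields nonempty $T_i$'s. Thus the colored statement is a genuine strengthening requiring its own proof (given in~\cite{Pa}), not a formal consequence of Theorem~\ref{pach}. Once you cite it, your proof is complete and identical to the paper's.
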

\begin{proof} Let $f:X\to \R^{|I|-1}$ be  an injection.
By the main result of~\cite{Pa} (which is a strengthening of  Theorem~\ref{pach}), applied to the sets $f\left(X_{\{i\}}\right)\subseteq\R^{|I|-1}$ for $i\in I$,
 we  obtain sets $P_i\subseteq f\left(X_{\{i\}}\right)$ with $|P_i|\ge c_{|I|-1}''\left|X_{\{i\}}\right|$, for $i\in I$,  such that all the simplices with vertices in each of the $P_i$ have a point in common.
 Here $c_{|I|-1}''$ is a constant depending only on $|I|$.
 By an application of Proposition~\ref{prop-alpha} to the sets $Z_i=f^{-1}(P_i)$ with $\alpha_{i}=c_{|I|-1}''$, we see that if $$q>\frac{16(|I|-1)^2}{\left(c_{|I|-1}''\right)^{2|I|}}$$ then the overlap parameter of $\Gamma\backslash G/K$ is at least $\frac12\left(c_{|I|-1}''\right)^{|I|}$.
\end{proof}

\begin{lemma}\label{lem-existence-lattices}  For any integer $r$ and for any non-archimedian local field $F$,   there exists a  cocompact lattice $\Gamma$ in $ G_{0}$ which satisfies condition (C),  and a sequence of finite index subgroups $\Gamma_{n}\subseteq \Gamma$ with  $\lim_{n\to\infty}| \Gamma/\Gamma_{n}|=\infty$.
\end{lemma}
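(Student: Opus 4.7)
The plan is to start from a cocompact arithmetic lattice in $G=PGL_r(F)$, intersect with $G_0$ to land in the correct subgroup, and then invoke residual finiteness twice: once to enforce condition (C) by killing a finite list of ``small displacement'' elements, and once more to produce the tower of finite index subgroups.

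First I would invoke a standard arithmetic construction of a cocompact lattice $\widetilde\Gamma\subseteq G$. In characteristic zero one can appeal to the Borel--Harder theorem on existence of cocompact arithmetic lattices in reductive groups over non-archimedean local fields, while in positive characteristic the analogous constructions via central division algebras over global function fields (split at $v$ with $k_v=F$, ramified at enough other places) are classical and underlie, among other things, the Ramanujan complex constructions of~\cite{LSV05,LSV05-exp}. Having such $\widetilde\Gamma$, I would set $\widetilde\Gamma_0=\widetilde\Gamma\cap G_0$, which is a subgroup of index at most $r$ in $\widetilde\Gamma$ and so is itself a cocompact lattice in $G_0$.

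Next I would promote $\widetilde\Gamma_0$ to a subgroup satisfying~(C). Since $\widetilde\Gamma_0$ is discrete and acts cocompactly on the building $G/K$, a fundamental set of vertices $y_1,\ldots,y_m$ is finite, and for each $j$ the set $S_j=\{\gamma\in \widetilde\Gamma_0:d(y_j,\gamma y_j)\le 2\}$ is finite. Put $S=(S_1\cup\cdots\cup S_m)\setminus\{1\}$. If $\gamma\in \widetilde\Gamma_0\setminus\{1\}$ satisfies $d(x,\gamma x)\le 2$ for some $x\in G/K$, writing $x=\delta y_j$ with $\delta\in \widetilde\Gamma_0$ gives $\delta^{-1}\gamma\delta\in S_j$, so $\gamma$ is $\widetilde\Gamma_0$-conjugate to an element of $S$. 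By residual finiteness of the arithmetic group $\widetilde\Gamma_0$, I can pick a normal finite index subgroup $\Gamma\triangleleft \widetilde\Gamma_0$ with $\Gamma\cap S=\emptyset$; by normality $\Gamma$ then avoids every $\widetilde\Gamma_0$-conjugate of $S$, which is exactly condition~(C) for~$\Gamma$. Finally, $G_0$ is non-compact for $r\ge 2$, so the cocompact lattice $\Gamma$ is infinite and residually finite, and any descending chain of normal finite index subgroups with trivial intersection yields a sequence $\Gamma_n\subseteq\Gamma$ with $|\Gamma/\Gamma_n|\to\infty$.

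The main obstacle is really only the first step: exhibiting a cocompact arithmetic lattice in $PGL_r(F)$ uniformly across $r$ and $F$, since the division algebra ramification pattern needed to ensure cocompactness differs between the mixed characteristic and equal characteristic situations and must be produced via the Hasse--Brauer--Noether local--global principle. Once such a lattice is in hand, condition~(C) and the tower $\{\Gamma_n\}$ are soft consequences of cocompactness, discreteness, and the (well-known) residual finiteness of arithmetic groups.
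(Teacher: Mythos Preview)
Your proposal is correct and follows essentially the same route as the paper: start from a cocompact arithmetic lattice in $PGL_r(F)$ (Borel--Harder in characteristic~$0$, division algebras in positive characteristic), pass to its intersection with $G_0$, observe that cocompactness forces the ``displacement~$\le 2$'' elements to fall into finitely many conjugacy classes, and then use residual finiteness of arithmetic groups to drop into a finite-index subgroup avoiding them and to produce the tower $\{\Gamma_n\}$. Your version is slightly more explicit in two places---you spell out the fundamental-domain argument for the finiteness of conjugacy classes and you insist on normality of $\Gamma$ in $\widetilde\Gamma_0$ so that avoiding the finite set $S$ automatically avoids all its conjugates---but these are exactly the details underlying the paper's more compressed phrasing.
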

\begin{proof} We start by choosing any cocompact arithmetic subgroup $\widetilde \Gamma$ in $G=PGL_{r}(F)$. The existence of a cocompact arithmetic subgroup is well-known: use division algebras over function fields when the characteristic of $F$ is finite and unitary groups over number fields when the characteristic of $F$ is $0$. In the case of characteristic $0$, it is a particular case of a theorem of Borel and Harder~\cite{BH78}; see also
Example 5.1.4, Corollary 5.12 and the remark after it in~\cite{Ben04}, for a short proof.
 Since $G/G_{0}=\mathbb Z/r\mathbb Z$, $\overline   \Gamma=\widetilde \Gamma\cap G_{0}$ is of finite index in $\widetilde \Gamma$. Since
 $\overline   \Gamma$ is a cocompact lattice in $G_{0}$,
 the  elements  $\gamma\in  \overline   \Gamma\setminus\{1\} $,
such that there exists $x\in G/K$ with $d(x,\gamma x)\leq 2$,
form a finite number  of conjugacy classes. By its arithmetic nature, $\overline   \Gamma$ embeds in the product of its finite quotients. Therefore there is a finite index subgroup $\Gamma\subseteq\overline   \Gamma$   which satisfies condition (C)   and a sequence of finite index subgroups  $\Gamma_{n}$  of  $\Gamma$ with  $\lim_{n\to\infty}| \Gamma/\Gamma_{n}|=\infty$.
\end{proof}

Corollary~\ref{cor-alpha}, applied (for $r\geq 3$) to the lattices $\Gamma_{n}$  of Lemma~\ref{lem-existence-lattices},  yields highly overlapping families of $|I|$-uniform hypergraphs.
In the  particular case where $I=\mathbb Z/r \mathbb Z$, these hypergraphs are finite quotients of the building of $PGL_{r}(F)$ and their  hyperedges are the images of the
 chambers of the building. 

\begin{proof}[Proof of Theorem~\ref{thm:PGL}] Theorem~\ref{thm:PGL} is a consequence of Corollary~\ref{cor-alpha}, applied to the Ramanujan complexes constructed in~\cite{LSV05-exp} (which are based on a lattice construction from~\cite{CS98}), together with their description in~\cite{LSV05-exp} as the clique complexes corresponding to the Cayley graphs associated to certain (explicitly defined) generators of $PGL_r(F)$, where $F=\mathbb F_p((t))$. We just need to ensure that the corresponding lattices are sublattices of $G_{0}$ (so that they preserve the type), as well as that condition (C) is satisfied. This is true for arbitrarily large $m$ due to Corollary 6.8 of~\cite{LSV05-exp}, or equivalently the case $r=d$ of Theorem 7.1 in~\cite{LSV05-exp}. Alternatively, one can consider the construction of the Ramanujan complexes in~\cite{Sar07}, specifically the second extreme distinguished case of Corollary 36 in~\cite{Sar07}.
 \end{proof}

\section{A Szemer\'edi-type theorem for infinite hypergraphs}\label{szemeredi}

In a measurable space, an {\it atom} is a measurable set which has positive measure and contains no set of smaller but positive measure. A measure which has no atoms is called {\it non-atomic}. A basic result of Sierpinski \cite{Si} states that if $\mu$ is a non-atomic measure, then for any measurable set $A$ and any $b$ with $0 \leq b \leq \mu(A)$, there is a measurable subset $B$ of $A$ with $\mu(B)=b$. Given a measurable space on a set $V$ and a measure $\mu$, the $h$-fold product measurable space is generated by all sets $B_1 \times \cdots \times B_h$, with $B_1,\ldots,B_h$ measurable subsets of $V$, and the product measure $\mu^h$ is the unique measure on this space given by $\mu^h(B_1 \times \cdots \times B_h)=\mu(B_1)\cdots\mu(B_h)$.

The aim of this section is to establish a Szemer\'edi-type theorem for infinite hypergraphs with a non-atomic measure on their vertex sets. The statement will be used in the next section, for the proof of Theorem~\ref{mainstructure}.

Given a finite or infinite $h$-uniform hypergraph $G=(V,E)$, we say that an $h$-tuple $(V_1,\ldots,V_h)$ of disjoint
subsets of $V$ is  {\em homogeneous} with respect to $G$ if either
all elements of $V_1 \times \ldots \times V_h$ are hyperedges of $G$
or none of them are. If, in addition, we have a finite non-atomic measure $\mu$ on $V$, we say that $G$ is {\em $(c,\mu)$-structured} provided that for all
disjoint measurable subsets $S_1,\ldots,S_h \subseteq V$, there
exist measurable $Y_i \subseteq S_i$ for $1 \leq i \leq h$ with
$\mu(Y_i) \geq c\mu(S_i)$ such that the $h$-tuple $(Y_1,\ldots,Y_h)$ is homogeneous. The following theorem generalizes a result in \cite{PaSo}, where the case $h=2$ was settled. The proof is based on an idea of Koml\'os.

\begin{theorem}\label{thm:regularity}
For any $c,\epsilon>0$ and for any positive integer $h$, there is
$K=K(h,c,\epsilon)$ such that the following statement is true. If
$G=(V,E)$ is a $(c,\mu)$-structured $h$-uniform hypergraph, where
$\mu$ is a non-atomic measure $\mu$, then for each $k \geq K$
there is a partition $V=V_1 \cup \ldots \cup V_k$ of $V$ into $k$
parts of equal measure such that all but at most an
$\epsilon$-fraction of the $h$-tuples $(V_{i_1},\ldots,V_{i_h})$ are
homogeneous.
\end{theorem}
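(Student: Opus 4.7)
I will run a Szemer\'edi-type energy increment argument, followed by a final adjustment step using the non-atomicity of $\mu$. For a finite measurable partition $\mathcal{P}=\{V_1,\ldots,V_m\}$ of $V$, define the monotone ``homogeneous weight''
\[
\mathrm{hom}(\mathcal{P})=\sum_{(i_1,\ldots,i_h)}\mathbf{1}\bigl[(V_{i_1},\ldots,V_{i_h})\text{ homogeneous}\bigr]\prod_{j=1}^h\mu(V_{i_j}),
\]
where the sum is over ordered $h$-tuples of distinct indices. The key property is that any disjoint sub-tuple of a homogeneous tuple is homogeneous, so $\mathrm{hom}$ is nondecreasing under refinement and bounded above by $\mu(V)^h$.

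The heart of the argument will be a refinement lemma: if a partition $\mathcal{P}$ has non-homogeneous tuples of total product-weight $\geq\epsilon'\mu(V)^h$, then some finite refinement $\mathcal{P}'$ satisfies $\mathrm{hom}(\mathcal{P}')\geq\mathrm{hom}(\mathcal{P})+c^h\epsilon'\mu(V)^h$. To build $\mathcal{P}'$, I will apply the $(c,\mu)$-structured hypothesis to each non-homogeneous parent tuple $\vec{i}$, producing homogeneous subsets $Y_{\vec{i},j}\subseteq V_{i_j}$ with $\mu(Y_{\vec{i},j})\geq c\mu(V_{i_j})$, and take $\mathcal{P}'$ to be the common refinement generated inside each $V_i$ by all the $Y_{\vec{i}',j}$'s whose ambient part is $V_i$. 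Each non-homogeneous parent tuple then contains a newly homogeneous sub-cell of product-weight at least $c^h\prod_j\mu(V_{i_j})$, and since different parent tuples correspond to disjoint products $V_{i_1}\times\cdots\times V_{i_h}$, these gains sum additively.

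Iterating the lemma from $\mathcal{P}_0=\{V\}$ with $\epsilon'=\epsilon/2$, after at most $2/(c^h\epsilon)$ steps the process yields a partition $\mathcal{P}^*$ in which non-homogeneous tuples carry weight at most $(\epsilon/2)\mu(V)^h$. The number of parts of $\mathcal{P}^*$ is bounded by some $M=M(h,c,\epsilon)$ (a tower-type function, since each step multiplies the number of parts by at most $2^{hm^{h-1}}$, where $m$ is the current count). To produce the equal-measure partition, set $K:=4Mh/\epsilon$; for $k\geq K$ I will use Sierpinski's theorem to carve $\lfloor ka_j/\mu(V)\rfloor$ disjoint ``pure'' pieces of measure $\mu(V)/k$ out of each $V_j^*$ (with $a_j=\mu(V_j^*)$) and combine the leftover measure into at most $M$ ``impure'' pieces of the same measure. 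Ordered $h$-tuples meeting an impure piece form a fraction $\leq Mh/k\leq\epsilon/4$, while pure tuples inherit homogeneity from their projection to $\mathcal{P}^*$, so non-homogeneous pure tuples form at most an $\epsilon/2$ fraction of all tuples; in total at most an $\epsilon$-fraction of ordered $h$-tuples are non-homogeneous, as required.

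I expect the main obstacle to be the simultaneous refinement in the key lemma: showing that the energy increments from separately refining different non-homogeneous parent tuples genuinely add up when one passes to the common refinement, despite the latter having many more cells than any single-tuple refinement. The point is that each gained homogeneous sub-cell $(Y_{\vec{i},1},\ldots,Y_{\vec{i},h})$ sits inside its own parent product, and distinct parent products are pairwise disjoint, so monotonicity of $\mathrm{hom}$ under refinement together with these disjoint positive contributions delivers the total increment. The quantitative blowup in the number of parts at each step is harmless because only finitely many refinement steps are needed.
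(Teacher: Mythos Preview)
Your increment lemma is correct, but the way you launch the iteration creates a real gap. The lemma yields a gain of at least $c^h$ times the product weight of \emph{distinct-index} tuples $(V_{i_1},\ldots,V_{i_h})$ that fail to be homogeneous; it says nothing about the ``diagonal'' weight $\mathrm{diag}(\mathcal P)=\mu(V)^h-\sum_{\text{distinct }(i_1,\ldots,i_h)}\prod_j\mu(V_{i_j})$. Starting from $\mathcal P_0=\{V\}$ there are no distinct-index tuples whatsoever, so the weight of non-homogeneous tuples is $0$, the stopping rule is already met, and you halt at $\mathcal P^*=\{V\}$. Then in the carving step every pure $h$-tuple projects to $(V,\ldots,V)$, a tuple with repeated indices, so the sentence ``pure tuples inherit homogeneity from their projection to $\mathcal P^*$'' yields nothing. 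More generally, even for a nontrivial $\mathcal P^*$ your bookkeeping bounds only the pure tuples whose parent in $\mathcal P^*$ has distinct indices and is non-homogeneous; the pure tuples with two pieces inside the same $V_j^*$ contribute at most $\mathrm{diag}(\mathcal P^*)$, which you never control.

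The fix is minor: begin the iteration from an arbitrary equipartition $\mathcal P_0$ into $N\ge 4h^2/\epsilon$ parts (using Sierpi\'nski), so that $\mathrm{diag}(\mathcal P_0)\le\binom{h}{2}\mu(V)^h/N\le(\epsilon/8)\mu(V)^h$; since $\mathrm{diag}$ is nonincreasing under refinement, the same bound holds for $\mathcal P^*$ and your final accounting goes through. With this repair your argument works and is genuinely different from the paper's. The paper never refines a running partition; instead it repeatedly locates, inside the hypergraph of \emph{not-yet-covered} $h$-tuples, a $(\gamma,\delta)$-superregular $h$-tuple of large product measure (Lemmas~\ref{partitesubhyp}--\ref{superreg}), uses the $(c,\mu)$-structured hypothesis to carve a homogeneous sub-$h$-tuple out of it, and only at the end passes to the partition these finitely many sets generate (Lemma~\ref{regularity} and Corollary~\ref{regul1}). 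Your route is conceptually simpler---no superregularity machinery is needed---but the common-refinement step at each stage blows up the number of parts to a tower of height $O(c^{-h}\epsilon^{-1})$, whereas the paper's covering argument gives a partition of size only doubly exponential in the parameters.
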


In the sequel, let $G=(V,E)$ be a fixed $h$-uniform hypergraph with a finite non-atomic measure $\mu$ on $V$.
The {\it measure} of $G$, denoted by $\mu(G)$, is the product measure $\mu^h$ of the set of $h$-tuples $(v_1,\ldots,v_h) \in V^h$ with $\{v_1,\ldots,v_h\}$ an edge of $G$ (we assume throughout that this set is $\mu^h$-measurable). Define the {\it edge-density} $d(G)$ of $G$ to be $\frac{\mu(G)}{\mu(V)^h}$. For measurable vertex subsets $V_1,\ldots,V_h$, define
\begin{equation}\label{eq:def nu}
\nu(V_1,\ldots,V_h)\eqdef\mu^h\Big((v_1,\ldots,v_h)\in V_1 \times \cdots \times V_h:\  \{v_1,\ldots,v_h\}\in E\Big),
\end{equation}
and the {\it edge density} $$d(V_1,\ldots,V_h)\eqdef\frac{\nu(V_1, \ldots,V_h)}{\mu(V_1) \cdots \mu(V_h)}.$$

An $h$-tuple of disjoint subsets $(X_1,\ldots,X_h)$ of vertices in an $h$-uniform hypergraph is said to be {\it $(\gamma,\delta)$-superregular} if for any subsets $Y_i \subseteq X_i$ with $\mu(Y_1)\cdots\mu(Y_h) \geq \gamma \mu(X_1)\cdots \mu(X_h)$, we have $d(Y_1,\ldots,Y_h)\geq \delta$.

The following lemma shows that the vertex set of any dense $h$-uniform hypergraph can be partitioned into $h$ parts of equal measure such that the sub-hypergraph formed by all of its edges that contain one point from each part is still relatively dense. The analogue of this statement for finite hypergraphs is well known and very easy to prove, as a uniformly random partition of the vertex set into equal parts will almost surely work. Here we do not have the leisure of taking such a uniform random partition.

\begin{lemma}\label{partitesubhyp}
Let $G=(V,E)$ be an $h$-uniform hypergraph, $\mu$ a finite non-atomic measure on $V$. Then there is a partition $V=V_1 \cup \ldots \cup V_h$ into measurable subsets, each  of measure $\mu(V)/h$, such that $d(V_1,\ldots,V_h) \geq \frac{d(G)}{2}$.
\end{lemma}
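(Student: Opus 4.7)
The plan is to prove this by an averaging argument on a suitable discrete probability space, since assigning each vertex independently to a uniformly random part would only yield equal-measure parts in expectation, not deterministically. I would instead exploit the non-atomicity of $\mu$ via Sierpi\'nski's theorem to discretize the problem and then randomize over equal-sized combinatorial partitions.

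First, fix a large integer $N$ to be chosen later, and apply Sierpi\'nski's theorem repeatedly to produce a partition $V = U_1 \cup \cdots \cup U_{Nh}$ into measurable pieces, each of measure $\mu(V)/(Nh)$. Consider the finite uniform probability space $\mathcal{P}_N$ consisting of ordered partitions $\Pi=(B_1,\ldots,B_h)$ of $\{1,\ldots,Nh\}$ into blocks of size exactly $N$. Each such $\Pi$ determines a partition $V_i(\Pi) \eqdef \bigcup_{j \in B_i} U_j$ of $V$ with $\mu(V_i(\Pi)) = \mu(V)/h$ for every $i$, so the equal-measure requirement is satisfied for every outcome.

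Next, I would compute $\mathbb{E}_\Pi[\nu(V_1(\Pi),\ldots,V_h(\Pi))]$ by decomposing the defining integral~\eqref{eq:def nu} according to which piece contains each coordinate. A direct count shows that for any fixed distinct indices $j_1,\ldots,j_h \in \{1,\ldots,Nh\}$, the probability that $j_i \in B_i$ for every $i$ equals
$$\frac{N^h}{(Nh)(Nh-1)\cdots(Nh-h+1)} \ \geq\ \frac{1}{h^h}.$$
The $h$-tuples $(v_1,\ldots,v_h)$ excluded from this count are precisely those in which two coordinates lie in a common piece; a crude union bound shows that the $\mu^h$-measure of such tuples is at most $\binom{h}{2}(Nh)\bigl(\mu(V)/(Nh)\bigr)^2 \mu(V)^{h-2} = \binom{h}{2}\mu(V)^h/(Nh)$. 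Assuming $\mu(G) > 0$ (otherwise the lemma is trivial), I would choose $N \geq (h-1)\mu(V)^h/\mu(G)$ so that this diagonal contribution is at most $\mu(G)/2$. Combining the two estimates yields
$$\mathbb{E}_\Pi\bigl[\nu(V_1(\Pi),\ldots,V_h(\Pi))\bigr] \ \geq\ \frac{1}{h^h}\left(\mu(G)-\frac{\mu(G)}{2}\right) \ =\ \frac{\mu(G)}{2h^h}.$$

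By the averaging principle some $\Pi \in \mathcal{P}_N$ realizes at least this expected value, and for such a $\Pi$,
$$d\bigl(V_1(\Pi),\ldots,V_h(\Pi)\bigr) \ =\ \frac{\nu(V_1(\Pi),\ldots,V_h(\Pi))}{(\mu(V)/h)^h} \ \geq\ \frac{\mu(G)/(2h^h)}{\mu(V)^h/h^h} \ =\ \frac{d(G)}{2},$$
as required. The only real obstacle is the careful bookkeeping of the ``diagonal'' error contributed by $h$-tuples whose coordinates collide in a single piece; since we are free to take $N$ arbitrarily large and $\mu$ is non-atomic, this error can be made negligible, so no truly delicate step is involved — the argument is essentially first-moment/averaging once the right probability space on equal-measure partitions is identified.
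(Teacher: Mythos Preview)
Your proof is correct and is essentially the same argument as the paper's: both discretize $V$ via Sierpi\'nski into many equal-measure pieces (the paper's $t=h\lceil h/d(G)\rceil$ pieces correspond to your $Nh$ pieces), bound the ``diagonal'' contribution of $h$-tuples with two coordinates in the same piece, and then average over uniformly random groupings of the pieces into $h$ blocks using the same $\ge h^{-h}$ probability estimate. The only cosmetic difference is that the paper fixes the number of pieces in advance rather than leaving $N$ as a parameter to be chosen later.
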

\begin{proof}
Write $t=h\lceil h/d(G) \rceil$. Arbitrarily partition $V$ into $t$ subsets $V=U_1 \cup \ldots \cup U_t$, each of measure $\mu(V)/t$. This can be done since $\mu$ is non-atomic. The product measure of the set of $h$-tuples of vertices from distinct $U_i$ is at least $$\prod_{i=0}^{h-1}\left(1-\frac{i}{t}\right)\mu(V) \geq \left(1-\frac{{h \choose 2}}{t}\right)\mu(V)^h\geq \left(1-\frac{h^2}{2t}\right)\mu(V)^h \geq \left(1-\frac{d(G)}{2}\right)\mu(V)^h.$$ Hence, the edge density of the sub-hypergraph of $G$ with vertices in different $U_i$ is at least $d(G)-d(G)/2 = d(G)/2$. We randomly partition $V=V_1 \cup \ldots \cup V_h$, where each $V_i$ is a union of $t/h$ of the $U_j$, with each such partition being equally likely. For $U_{j_1},\ldots,U_{j_h}$ with $j_1<\ldots<j_h$, the probability that $U_{j_i} \subseteq V_i$ for all $1 \leq i \leq h$ is at least $h^{-h}$. Hence, by linearity of expectation and the fact $\mu(V_1)\cdots\mu(V_h)=h^{-h}\mu(V)^h$, the expected value of $d(V_1,\ldots,V_h)$ is at least $d(G)/2$. It follows that there is a partition $V=V_1 \cup \ldots \cup V_h$ into parts of equal measure with $d(V_1,\ldots,V_h) \geq d(G)/2$.
\end{proof}

The next lemma shows that if an $h$-tuple of sets is not $(\gamma,\delta)$-superregular,
then we can find subsets of large measure such that the edge density between them is significantly larger than the edge density between the original $h$-tuple.

\begin{lemma}\label{nextpartitesup}
Let $G=(V,E)$ be an $h$-uniform hypergraph and $\mu$ a finite non-atomic measure on $V$. If a collection of $h$ measurable vertex subsets $(W_1,\ldots,W_h)$ with $d(W_1,\ldots,W_h) = c$ is not $(\delta,\gamma)$-superregular, then there are subsets $Z_i \subseteq W_i$ for $1 \leq i \leq h$ such that
$$\mu(Z_1)\ldots\mu(Z_h) \geq \frac{\delta\gamma}{2^h} \mu(W_1)\cdots\mu(W_h)$$ and $$d(Z_1,\ldots,Z_h) \geq c+(c-2\delta)\frac{\gamma}{1-\gamma} .$$
\end{lemma}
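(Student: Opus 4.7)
The plan is to exploit the failure of superregularity by a ``cell-splitting'' argument that generalizes the standard $h=2$ case for graphs. I read ``not $(\delta,\gamma)$-superregular'' as a minor order-swap of the parameters in the definition above (the roles of the two constants in the conclusion of the lemma make this the only sensible reading): there exist measurable $Y_i \subseteq W_i$ with $\prod_{i} \mu(Y_i) \geq \gamma \prod_{i} \mu(W_i)$ but $d(Y_1,\ldots,Y_h) < \delta$. Write $\alpha_i = \mu(Y_i)/\mu(W_i)$, $\beta = \prod_i \alpha_i \geq \gamma$, and $V = \prod_i \mu(W_i)$.

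First I would partition $W_1 \times \cdots \times W_h$ into the $2^h$ product cells $C_J = \prod_{j \in J} Y_j \times \prod_{j \notin J}(W_j \setminus Y_j)$ indexed by $J \subseteq \{1,\ldots,h\}$. Writing $M_J$ for the product measure of $C_J$ and $E_J$ for its $\nu$-mass, one has $\sum_J M_J = V$ and $\sum_J E_J = cV$; the full cell $J_0=\{1,\ldots,h\}$ has $M_{J_0}=\beta V$, and the hypothesis on $(Y_1,\ldots,Y_h)$ gives $E_{J_0} < \delta \beta V$. The goal is to choose $(Z_1,\ldots,Z_h)$ as the factors of one of the remaining $2^h-1$ cells, picked to have both sizeable measure and high density.

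The key move is to set a measure threshold $\tau = \delta\gamma/2^h$ and discard every cell with $M_J < \tau V$. Since $E_J \leq M_J$, the total edge mass of the discarded cells is bounded by their total measure, hence at most $2^h \tau V = \delta\gamma V$. Removing those and the full cell, the surviving cells together hold at least $(c - \delta\beta - \delta\gamma)V$ edges in combined measure at most $(1-\beta)V$, so by a weighted average some surviving cell $J^\ast$ satisfies
$$d(C_{J^\ast}) \;\geq\; \frac{c-\delta\beta-\delta\gamma}{1-\beta} \;\geq\; \frac{c-2\delta\beta}{1-\beta} \;=\; c + \frac{(c-2\delta)\beta}{1-\beta},$$
using $\gamma \leq \beta$. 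Taking $(Z_1,\ldots,Z_h)$ to be the factors of $C_{J^\ast}$ gives $\prod_i \mu(Z_i) = M_{J^\ast} \geq \delta\gamma V/2^h$, and when $c \geq 2\delta$ the monotonicity of $\beta\mapsto \beta/(1-\beta)$ together with $\beta \geq \gamma$ upgrades the density bound to $c+(c-2\delta)\gamma/(1-\gamma)$, as wanted. In the degenerate case $c < 2\delta$ the claimed bound is at most $c$, so the trivial choice $Z_i = W_i$ (for which $\prod_i \mu(Z_i)=V \geq \delta\gamma V/2^h$) already discharges the lemma.

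The main obstacle is the need to control density and measure \emph{simultaneously}: a naive average over the $2^h-1$ non-full cells yields one of the required density but possibly of vanishingly small measure, which would be useless for the intended downstream iteration in a Szemer\'edi-type regularity scheme. The threshold-discarding trick is exactly what converts the averaging argument into an honest lower bound on $M_{J^\ast}$, at the cost of the slightly loose constant $c+(c-2\delta)\gamma/(1-\gamma)$ (instead of $c+(c-\delta)\gamma/(1-\gamma)$, which would be the ``clean'' defect one gets from the full cell alone).
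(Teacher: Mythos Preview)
Your proof is correct and follows essentially the same route as the paper's: split $W_1\times\cdots\times W_h$ into the $2^h$ product cells $T_i\in\{Y_i,W_i\setminus Y_i\}$, discard those with product measure below $\frac{\delta\gamma}{2^h}\prod_i\mu(W_i)$ together with the all-$Y_i$ cell, and average over the survivors. Your explicit treatment of the degenerate case $c<2\delta$ (via the trivial choice $Z_i=W_i$) is a small addition the paper leaves implicit, since in its application the density always satisfies $c\geq 4\delta$.
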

\begin{proof}
Since $(W_1,\ldots,W_h)$ is not $(\delta,\gamma)$-superregular, there exist subsets  $Y_i \subseteq W_i$ for every $1 \leq i \leq h$ with
\begin{equation}\label{eq:contradict regular}
\mu(Y_1)\cdots\mu(Y_h) \geq \gamma\mu(W_1) \cdots\mu(W_h)\quad\mathrm{and}\quad d(Y_1,\ldots,Y_h) < \delta.\end{equation} 
The sum of all the $2^h$ terms $\nu(T_1,\ldots,T_h)$ (where $\nu$ is defined in~\eqref{eq:def nu}) with $T_i=Y_i$ or $T_i=W_i\setminus Y_i$ equals $\nu(W_1,\ldots,W_h)=c\mu(W_1)\cdots\mu(W_h)$. The sum of $\nu(T_1,\ldots,T_h)$ over all such terms with not all $T_i=Y_i$ and with $$\mu(T_1)\cdots\mu(T_h) \geq \frac{\delta\gamma}{2^h} \mu(W_1) \cdots \mu(W_h)$$ is therefore greater than $$\left(c-2^h\frac{\delta\gamma}{2^h}\right)\mu(W_1)\cdots\mu(W_h)-\delta\mu(Y_1)\cdots\mu(Y_h) \geq c\mu(W_1)\cdots\mu(W_h)-2\delta\mu(Y_1)\cdots\mu(Y_h).$$ Here we used both inequalities in~\eqref{eq:contradict regular}. Also, the sum of $\mu(T_1)\cdots \mu(T_h)$ over these terms is at most $\mu(W_1)\cdots\mu(W_h)-\mu(Y_1)\cdots\mu(Y_h)$. By averaging, if $a_1+\cdots+a_k \geq A$ and $b_1 +\cdots+b_k \leq B$ with all $a_i,b_i$ positive, then there is $1\le i\le k$ such that
$\frac{a_i}{b_i} \geq \frac{A}{B}$. Hence, there are $T_1,\ldots,T_h$ with $T_i=Y_i$ or $T_i=W_i \setminus Y_i$ and not all $T_i=Y_i$, with
$\mu(T_1)\cdots\mu(T_h) \geq 2^{-h}\delta\gamma \mu(W_1)\cdots\mu(W_h)$ and \begin{multline*} d(T_1,\ldots,T_h)  =  \frac{\nu(T_1,\ldots,T_h)}{\mu(T_1)\cdots\mu(T_h)} \geq  \frac{c\mu(W_1)\cdots\mu(W_h)-2\delta\mu(Y_1)\cdots\mu(Y_h)}{\mu(W_1)\cdots\mu(W_h)-\mu(Y_1)\cdots\mu(Y_h)} \\  =    c+(c-2\delta)\frac{\mu(Y_1)\cdots\mu(Y_h)}{\mu(W_1)\cdots\mu(W_h)-\mu(Y_1)\cdots\mu(Y_h)} \geq c+(c-2\delta)\frac{\gamma}{1-\gamma},
\end{multline*}
as required.
\end{proof}

By repeated application of Lemma~\ref{nextpartitesup}, we obtain the following result, which shows that a dense hypergraph contains a superregular $h$-tuple of sets of large measure.

\begin{lemma}\label{superreg}
For $\gamma,\delta>0$ and a positive integer $h$, there is $\alpha=\alpha(\gamma,\delta,h)$ such that the following holds.
If $G=(V,E)$ is an $h$-uniform hypergraph, $\mu$ is a finite non-atomic measure on $V$ with $d(G) \geq 8\delta$, then there is an $h$-tuple $(X_1,\ldots,X_h)$ of disjoint measurable vertex subsets, which is $(\gamma,\delta)$-superregular and satisfies $\mu(X_1)\cdots \mu(X_h) \geq \alpha \mu(V)^h$.
\end{lemma}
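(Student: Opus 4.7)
The plan is a standard density-increment argument, iterating Lemma~\ref{nextpartitesup} starting from the balanced $h$-partition produced by Lemma~\ref{partitesubhyp}. The main issue will be to check that the density increment at each step is bounded below by a constant depending only on $\gamma, \delta$, so that the iteration must terminate after a uniformly bounded number of steps, leaving us with a definite lower bound on the product of measures.

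First I would apply Lemma~\ref{partitesubhyp} to $G$ to obtain a measurable partition $V = V_1 \cup \cdots \cup V_h$ into parts of equal measure $\mu(V)/h$ with $d(V_1,\ldots,V_h) \geq d(G)/2 \geq 4\delta$. Set $X_i^{(0)} = V_i$; then $\mu(X_1^{(0)}) \cdots \mu(X_h^{(0)}) = h^{-h}\mu(V)^h$ and $c_0 \eqdef d(X_1^{(0)},\ldots,X_h^{(0)}) \geq 4\delta$.

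Now I would iterate. Suppose we have produced disjoint measurable $X_i^{(n)} \subseteq V_i$ with product measure at least $h^{-h}(\delta\gamma/2^h)^n \mu(V)^h$ and density $c_n \eqdef d(X_1^{(n)},\ldots,X_h^{(n)})$. If the tuple $(X_1^{(n)},\ldots,X_h^{(n)})$ is $(\gamma,\delta)$-superregular we stop. Otherwise, Lemma~\ref{nextpartitesup} supplies $X_i^{(n+1)} \subseteq X_i^{(n)}$ with product measure at least $(\delta\gamma/2^h)$ times that of the previous step, and with density
\[
c_{n+1} \;\geq\; c_n + (c_n - 2\delta)\frac{\gamma}{1-\gamma}.
\]
Since $c_0 \geq 4\delta$ and the sequence $(c_n)$ is non-decreasing, we have $c_n - 2\delta \geq 2\delta$ for every $n$ reached, so $c_{n+1} - c_n \geq 2\delta\gamma/(1-\gamma)$. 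Because densities are bounded above by $1$, the iteration must halt at some step $N \leq (1-\gamma)/(2\delta\gamma)$, yielding a $(\gamma,\delta)$-superregular $h$-tuple $(X_1,\ldots,X_h)$ with
\[
\mu(X_1)\cdots\mu(X_h) \;\geq\; h^{-h}\bigl(\delta\gamma/2^h\bigr)^{N}\mu(V)^h \;\geq\; \alpha\,\mu(V)^h,
\]
where $\alpha = \alpha(\gamma,\delta,h) \eqdef h^{-h}(\delta\gamma/2^h)^{\lceil (1-\gamma)/(2\delta\gamma)\rceil}$ depends only on $\gamma, \delta, h$.

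The hard part is really just the bookkeeping in step (iii): one must verify that the density stays above $2\delta$ throughout so that each application of Lemma~\ref{nextpartitesup} genuinely increases the density by a fixed positive amount, and that the product-measure loss factor $\delta\gamma/2^h$ is applied only a bounded number of times. The initial gap $d(G) \geq 8\delta$ (rather than merely $> 2\delta$) is exactly what one needs after paying the factor of $1/2$ in Lemma~\ref{partitesubhyp} to ensure $c_0 - 2\delta$ is bounded below by a positive constant independent of the step. No further measure-theoretic subtleties arise beyond what is already used in the two preceding lemmas.
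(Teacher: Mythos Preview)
Your proof is correct and follows essentially the same density-increment strategy as the paper: start from the balanced partition of Lemma~\ref{partitesubhyp} and iterate Lemma~\ref{nextpartitesup} until the tuple is $(\gamma,\delta)$-superregular. The only cosmetic difference is that the paper bounds the number of iterations via the multiplicative increment $c_{n+1} > c_n(1+\gamma/2)$ (using $c_n - 2\delta \ge c_n/2$), whereas you use the additive increment $c_{n+1}-c_n \ge 2\delta\gamma/(1-\gamma)$; both yield a bound on $N$ depending only on $\gamma,\delta$ and hence a valid choice of $\alpha(\gamma,\delta,h)$.
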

\begin{proof}
Define
$$
\alpha=\frac1{h^{h}}\left(\frac{\delta\gamma}{2^h}\right)^{\frac{2}{\gamma}\log_2 (1/\delta)}.
 $$
 By Lemma~\ref{partitesubhyp}, there is a partition $V=V_1 \cup \ldots \cup V_h$ into parts of equal measure such that $d\eqdef d(V_1,\ldots,V_h) \geq d(G)/2$. We will repeatedly apply Lemma~\ref{nextpartitesup}, starting with the sets $V_1,\ldots,V_h$, until we get a $(\gamma,\delta)$-superregular $h$-tuple.

If $(V_1,\ldots,V_h)$ is not $(\gamma,\delta)$-superregular, then we can find $V_1^1 \subseteq V_1,\ldots,V_h^1 \subseteq V_h$ with
$$\mu(V_1^1)\cdots\mu(V_h^1) \geq \frac{\delta\gamma}{2^h} \mu(V_1)\cdots\mu(V_h)$$ and
$$d\left(V_1^1,\ldots,V_h^1\right) \geq d+(d-2\delta)\frac{\gamma}{1-\gamma}  > d\left(1+\frac{\gamma}{2}\right).$$
After $k$ iterations, we either have found a $(\gamma,\delta)$-superregular $h$-tuple, or we find $V_1^k,\ldots,V_h^k$ with
$$\mu(V_1^k)\cdots\mu(V_h^k) \geq \left(\frac{\delta\gamma}{2^h}\right)^k \mu(V_1)\cdots\mu(V_h)$$ and $$d\left(V_1^k,\ldots,V_h^k\right) >
d\left(1+\frac{\gamma}{2}\right)^k.$$ This cannot continue for more than $k_0=\frac{2}{\gamma}\log_2(1/d)$ iterations, as otherwise we would produce an $h$-tuple of density more than $1$, a contradiction. Thus, at some step $k \leq k_0$, we find an $h$-tuple of sets $X_i \eqdef V_i^k$ with
\begin{equation*}\mu(X_1)\cdots\mu(X_h)  \geq  \left(\frac{\delta\gamma}{2^h}\right)^k \mu(V_1)\cdots\mu(V_h)=\frac{1}{h^h}\left(\frac{\delta\gamma}{2^h}\right)^k\mu(V)^h \geq \alpha \mu(V)^h,\end{equation*}
which is $(\gamma,\delta)$-superregular.
\end{proof}

An $h$-uniform hypergraph $H=(V,E)$ is {\em $h$-partite} if there is a partition $V=V_1 \cup \ldots \cup V_h$ such that every edge has exactly one vertex in each $V_i$. For a vertex set $V$, and a collection $\mathcal{C}$ of $h$-tuples $A_1,\ldots,A_h$ of vertex subsets of $V$, define the hypergraph $H(\mathcal{C})$ on $V$,
which is the union of the complete $h$-partite $h$-uniform hypergraphs with parts $A_1,\ldots,A_h$.

\begin{lemma}\label{regularity}
For $c,\epsilon>0$ and a positive integer $h$, there is $L=L(h,c,\epsilon)$ such that the following statement is true. If $G=(V,E)$ is a $(c,\mu)$-structured $h$-uniform hypergraph with $\mu$ a finite non-atomic measure, then there is a collection $\mathcal{C}$ of at most $L$ homogeneous $h$-tuples of vertex subsets such that the density of $H(\mathcal{C})$ is at least $1-\epsilon$.
\end{lemma}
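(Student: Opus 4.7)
The plan is to build $\mathcal{C}$ greedily so that each iteration absorbs a fixed positive fraction of $\mu(V)^h$ worth of previously uncovered ordered $h$-tuples. Setting $\delta=\epsilon/8$ and $\gamma=c^h$, let $\alpha=\alpha(\gamma,\delta,h)$ be the constant supplied by Lemma~\ref{superreg}. I initialize $\mathcal{C}=\emptyset$ and repeat the following: if $d(H(\mathcal{C}))\geq 1-\epsilon$, halt; otherwise let $U\subseteq V^h$ denote the set of ordered tuples $(v_1,\ldots,v_h)$ with pairwise distinct entries whose underlying unordered set is not a hyperedge of $H(\mathcal{C})$, so that $\mu^h(U)\geq\epsilon\mu(V)^h$.

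Now $U$ is invariant under coordinate permutations, so it is the set of ordered realizations of edges of an auxiliary $h$-uniform hypergraph $G_U$ on $V$ with $d(G_U)\geq\epsilon=8\delta$. I apply Lemma~\ref{superreg} to $G_U$ to obtain disjoint measurable $X_1,\ldots,X_h\subseteq V$ which are $(\gamma,\delta)$-superregular for $G_U$ and satisfy $\mu(X_1)\cdots\mu(X_h)\geq\alpha\mu(V)^h$. Since $G$ is $(c,\mu)$-structured, I then extract measurable $Y_i\subseteq X_i$ with $\mu(Y_i)\geq c\mu(X_i)$ such that $(Y_1,\ldots,Y_h)$ is homogeneous with respect to $G$, and append this tuple to $\mathcal{C}$.

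For the termination count: $\mu(Y_1)\cdots\mu(Y_h)\geq c^h\mu(X_1)\cdots\mu(X_h)=\gamma\mu(X_1)\cdots\mu(X_h)$, and superregularity then forces $d_{G_U}(Y_1,\ldots,Y_h)\geq\delta$. Hence $\mu^h\bigl((Y_1\times\cdots\times Y_h)\cap U\bigr)\geq\delta\mu(Y_1)\cdots\mu(Y_h)$, and by the coordinate symmetry of $U$ the analogous lower bound holds on each of the $h!$ permuted products. All of these products are absorbed into $H(\mathcal{C})$ in this step, so $\mu^h(U)$ drops by at least $h!\,\delta c^h\alpha\,\mu(V)^h$, a fixed positive multiple of $\mu(V)^h$. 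The procedure therefore halts after at most $L=\lceil 1/(h!\,\delta c^h\alpha)\rceil$ iterations, giving the desired bound $L=L(h,c,\epsilon)$.

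The main obstacle is conceptual rather than computational: the proof must simultaneously handle two different hypergraphs---the original $G$, the only object for which the $(c,\mu)$-structured hypothesis yields homogeneous tuples, and the uncovered hypergraph $G_U$, whose density drives Lemma~\ref{superreg}. The calibration $\gamma=c^h$ is what bridges them, since after shrinking the superregular box $(X_1,\ldots,X_h)$ of $G_U$ by the factor $c$ supplied by $(c,\mu)$-structuredness to produce a homogeneous $(Y_1,\ldots,Y_h)$ of $G$, we remain just inside the superregular regime and so inherit a quantitative density lower bound on the uncovered mass that $(Y_1,\ldots,Y_h)$ absorbs.
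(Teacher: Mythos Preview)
Your proof is correct and follows essentially the same approach as the paper: both set $\gamma=c^h$, $\delta=\epsilon/8$, locate a $(\gamma,\delta)$-superregular box in the uncovered hypergraph via Lemma~\ref{superreg}, shrink it to a homogeneous tuple via $(c,\mu)$-structuredness, and record the same density drop $h!\,\delta c^h\alpha$ per iteration. The only cosmetic difference is that you construct the tuple explicitly at each step, whereas the paper phrases the halting condition as ``no homogeneous tuple with density contribution $\geq\beta$ remains'' and then argues by contradiction that the residual density is below $\epsilon$; the arithmetic and the bound $L$ coincide.
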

\begin{proof}
Let $H_0$ denote the complete $h$-uniform hypergraph on $V$, and let $\gamma=c^h$, $\delta=\epsilon/8$, $\alpha=\alpha(\gamma,\delta,h)$ as in Lemma \ref{superreg}, $\beta=h!\delta c^h \alpha$, and $L=\beta^{-1}$.  Suppose that for some $i \geq 1$, we have already defined $H_{i-1}$. If $G$ has an $h$-tuple $A_1,\ldots,A_h$ of disjoint subsets which is homogeneous and such that the hypergraph which consists of those edges of $H_{i-1}$ that have one vertex in each $A_{\ell}$ has edge density at least $\beta$, then let $H_i$ denote the sub-hypergraph of $H_{i-1}$ obtained by deleting all edges with one vertex in each $A_{\ell}$.
Otherwise, we stop. This process will clearly terminate in $j \leq \beta^{-1}$ steps with an $h$-uniform hypergraph $H_j$ as $d(H_{i}) \leq 1-i\beta$ for each $i$.

We next show that $d(H_j) < \epsilon$. Indeed, otherwise by Lemma~\ref{superreg}, $H_j$ has a collection of $h$ disjoint vertex subsets $(X_1,\ldots,X_h)$ which is $(\gamma,\delta)$-superregular with $$\mu(X_1)\cdots\mu(X_h) \geq \alpha\mu(V)^h.$$ Since $G$ is $(c,\mu)$-structured, there are subsets $A_i \subseteq X_i$ for $1 \leq i \leq h$ with $\mu(A_i) \geq c \mu(X_i)$ such that $(A_1,\ldots,A_h)$ is homogeneous with respect to $G$. As $\gamma=c^h$,
$$\mu(A_1) \cdots \mu(A_h) \geq \gamma \mu(X_1) \cdots \mu(X_h),$$
and $(X_1,\ldots,X_h)$ is $(\gamma,\delta)$-superregular, we have $d(A_1,\ldots,A_h) \geq \delta$, and hence the hypergraph which consists of those edges of $H_{j}$ that have one vertex in each $A_{\ell}$ has edge density at least
$$h!\delta \cdot \frac{\mu(A_1)\cdots\mu(A_h)}{\mu(V)^{h}} \geq h!\delta c^h\cdot\frac{ \mu(X_1) \cdots \mu(X_h)}{\mu(V)^{h}} \geq h!\delta c^h \alpha=\beta.$$
However, this contradicts the fact that since the construction terminated at the $j$th step, there are no such subsets $A_1,\ldots,A_h$.
\end{proof}

For an $h$-uniform hypergraph $G=(V,E)$ and a vertex partition $\mathcal{P}=\{V = V_1 \cup \ldots \cup V_k\}$, the {\it homogeneous hypergraph} $G_{\mathcal{P}}$ is the $h$-uniform hypergraph on $V$ where $(v_1,\ldots,v_h) \in V_{i_1} \times \cdots \times V_{i_h}$ is an edge if and only if $(V_{i_1},\ldots,V_{i_h})$ is homogeneous with respect to $G$. Note that if $\mathcal{P}'$ is a refinement of $\mathcal{P}$, then $d(G_{\mathcal{P}'}) \geq d(G_{\mathcal{P}})$.
Given a collection $\mathcal{C}$ of $h$-tuples $(A_{1},\ldots,A_{h})$ which are homogeneous in $G$, define the partition $\mathcal{P}$ of $V$ into $(h+1)^{|\mathcal{C}|}$ parts, where each part consists of those vertices in the same $A_i$ (or in none of the $A_i$) for each $h$-tuple in $\mathcal{C}$. The hypergraph $G_{\mathcal{P}}$ contains the hypergraph $H(\mathcal{C})$. Hence, we have the following corollary of Lemma \ref{regularity} with $M=(h+1)^L$.

\begin{corollary}\label{regul1}
For any $c,\epsilon>0$ and for any positive integer $h$, there is $M=M(h,c,\epsilon)$ such that the following statement is true. If $G=(V,E)$ is a $(c,\mu)$-structured $h$-uniform hypergraph with a finite non-atomic measure $\mu$ on its vertex set, then there is a partition $\mathcal{P}$ of $V$ into at most $M$ parts such that $d(G_{\mathcal{P}}) \geq 1-\epsilon$.
\end{corollary}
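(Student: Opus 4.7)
The plan is to deduce Corollary~\ref{regul1} directly from Lemma~\ref{regularity} by a routine bookkeeping argument, so this will be a short proof rather than a new construction. Set $L=L(h,c,\epsilon)$ as provided by Lemma~\ref{regularity}, and take $M=(h+1)^L$. Applying Lemma~\ref{regularity} to the $(c,\mu)$-structured hypergraph $G$, I obtain a collection $\mathcal{C}$ of $h$-tuples $(A_1^{(t)},\ldots,A_h^{(t)})$, each homogeneous with respect to $G$, with $|\mathcal{C}|\le L$ and $d(H(\mathcal{C}))\ge 1-\epsilon$.

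Next I define the partition $\mathcal{P}$ of $V$ as follows. For each $t\in\mathcal{C}$, since the sets $A_1^{(t)},\ldots,A_h^{(t)}$ are disjoint, every vertex $v\in V$ has a well-defined label $\phi_t(v)\in\{0,1,\ldots,h\}$ recording which $A_i^{(t)}$ contains $v$, with $\phi_t(v)=0$ meaning $v$ lies in none of them. The partition $\mathcal{P}$ places two vertices in the same part exactly when their full label vectors $(\phi_t)_{t\in\mathcal{C}}$ agree. This gives at most $(h+1)^{|\mathcal{C}|}\le(h+1)^L=M$ measurable parts, and by construction each part is contained in some $A_i^{(t)}$ (or in the complement of $\bigcup_i A_i^{(t)}$) for every $t\in\mathcal{C}$.

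The key observation is that $G_{\mathcal{P}}\supseteq H(\mathcal{C})$. Indeed, suppose $(v_1,\ldots,v_h)$ is an edge of $H(\mathcal{C})$; then there exists $t\in\mathcal{C}$ with $v_j\in A_j^{(t)}$ for all $j$. Let $V_{i_j}$ be the part of $\mathcal{P}$ containing $v_j$; by construction $V_{i_j}\subseteq A_j^{(t)}$. Since $(A_1^{(t)},\ldots,A_h^{(t)})$ is homogeneous with respect to $G$, every element of $V_{i_1}\times\cdots\times V_{i_h}\subseteq A_1^{(t)}\times\cdots\times A_h^{(t)}$ is of the same type (all edges or no edges); the pair $(v_1,\ldots,v_h)$ is itself an edge of $G$, so they must all be edges. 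Thus $(V_{i_1},\ldots,V_{i_h})$ is homogeneous and $(v_1,\ldots,v_h)\in G_{\mathcal{P}}$.

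Consequently $d(G_{\mathcal{P}})\ge d(H(\mathcal{C}))\ge 1-\epsilon$, which is exactly the desired conclusion. I do not expect any serious obstacle here: all the nontrivial content has been absorbed into Lemma~\ref{regularity}, and what remains is really just verifying that the partition induced by a bounded family of homogeneous tuples inherits homogeneity on the parts it refines. The only point that requires a moment of care is the inheritance step, where one must note that homogeneity of $(A_1^{(t)},\ldots,A_h^{(t)})$ automatically passes to any sub-tuple of subsets, so the part labels themselves already certify the homogeneity needed for $G_{\mathcal{P}}$.
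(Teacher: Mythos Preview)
Your approach is exactly the paper's: take $M=(h+1)^L$, form the common refinement induced by the at most $L$ homogeneous $h$-tuples from Lemma~\ref{regularity}, and observe $G_{\mathcal{P}}\supseteq H(\mathcal{C})$. One small slip: the assertion that ``$(v_1,\ldots,v_h)$ is itself an edge of $G$'' is not justified (a homogeneous tuple may be of the ``all non-edges'' type), but this extra clause is unnecessary---homogeneity of $(V_{i_1},\ldots,V_{i_h})$ alone already gives $(v_1,\ldots,v_h)\in G_{\mathcal{P}}$ by definition, so the argument stands.
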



\begin{proof}[Proof of Theorem~\ref{thm:regularity}]
Let $M=M(h,c,\frac{\epsilon}{2})$ as in Corollary \ref{regul1} and $K$ be the smallest integer which is at least $2Mh\epsilon^{-1}$. Fix $k\ge K$. By Corollary \ref{regul1}, there is a partition $\mathcal{P}'$ of $V$ into at most $M$ parts such that $d(G_{\mathcal{P}'}) \geq 1-\frac{\epsilon}{2}$. If $V_i$ is a part of $\mathcal{P}'$, arbitrarily partition $V_i$ into parts of measure $\mu(V)/k$ and one remaining piece of measure at most $\mu(V)/k$. Let $W$ be the union of the remaining parts, so $\mu(W) \leq M\mu(V)/k$, and arbitrarily partition $W$ into parts of measure $\mu(V)/k$. We have thus produced a partition $\mathcal{P}$ into $k$ parts of equal measure, and we next show that this partition satisfies the assertion of Theorem~\ref{thm:regularity}. The edge density of the hypergraph of $h$-tuples in $V^h$ that contain a vertex in $W$ is at most $$1-\left(1-\frac{\mu(W)}{\mu(V)}\right)^h \leq h\frac{\mu(W)}{\mu(V)} \leq \frac{hM}{k}\le\frac{\epsilon}{2}.$$ Thus $d(G_{\mathcal{P}}) \geq
d(G_{\mathcal{P}'})-\frac{\epsilon}{2} \geq 1-\epsilon$, and hence the partition $\mathcal{P}$ satisfies the conclusion of Theorem~\ref{thm:regularity}.
\end{proof}

\section{A partition result---Proof of Theorem~\ref{mainstructure}}\label{structuresub}

Before proving Theorem \ref{mainstructure} in its full generality, we give a simple argument for the special case $d=2$, which provides a good upper bound on the constant $K(\epsilon,2)$.
\begin{proposition}
Let $\epsilon>0$ and $k \geq \frac{12}{\epsilon}+1$. For any finite Borel measure $\mu$ on $\mathbb{R}^2$ with respect to which every line has measure $0$, and for any point $q \in \mathbb{R}^d$, there is a partition $\mathbb{R}^2=A_1 \cup \ldots \cup A_{k}$ into $k$ measurable parts of equal measure, such that all but at most an $\epsilon$-fraction of the triples $A_{h},A_{i},A_{j}$ are homogenous with respect to $q$. \end{proposition}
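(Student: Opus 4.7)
The plan is to construct the partition as $k$ equal-measure sectors with apex at $q$ and then count non-homogeneous triples via an antipodal incidence argument. First I would observe that since every line has $\mu$-measure zero, so does every ray emanating from $q$ (as the union of two opposite rays is a line); consequently the pushforward of $\mu$ to $S^1$ under $x \mapsto \arg(x-q)$ is an atomless finite measure $\nu$. Using the intermediate value theorem on $\nu$, I would pick angles $\phi_0 < \phi_1 < \cdots < \phi_{k-1}$ (with $\phi_k = \phi_0 + 2\pi$) so that each sector
\[A_\ell \;=\; \{ x \in \mathbb{R}^2 \setminus \{q\}: \arg(x-q) \in (\phi_{\ell-1}, \phi_\ell) \},\]
with $q$ thrown into $A_1$, has $\mu$-measure $\mu(\mathbb{R}^2)/k$.

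Next I would translate homogeneity into angular arithmetic. A triangle with one vertex $p_\ell$ in each sector $A_\ell$ (for $\ell \in \{i,j,h\}$) contains $q$ iff the three angles $\arg(p_\ell - q)$ do not all lie in a common open semicircle of $S^1$. Writing $I_\ell = [\phi_{\ell-1}, \phi_\ell]$ for the angular arc of $A_\ell$ and fixing $i < j < h$, a direct case analysis yields that the triple $(A_i,A_j,A_h)$ is homogeneously ``all-contain'' exactly when
\[\phi_j - \phi_{i-1} < \pi, \quad \phi_h - \phi_{j-1} < \pi, \quad \phi_{h-1} - \phi_i > \pi,\]
and is homogeneously ``none-contain'' exactly when $I_i \cup I_j \cup I_h$ fits inside some open semicircle of $S^1$.

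Then I would bound non-homogeneous triples via antipodal incidence. Taking the negations of the two characterizations above and tracking cases, whenever $(A_i,A_j,A_h)$ is non-homogeneous, at least one of the pairs $\{i,j\}$, $\{j,h\}$, $\{i,h\}$ must be \emph{antipodal}, in the sense that the $\pi$-rotation of one arc on $S^1$ meets the other arc. The critical combinatorial lemma is that each boundary $\phi_\ell$ has its antipode $\phi_\ell + \pi$ lying in the interior of exactly one arc $I_{j(\ell)}$ (generically), and summing this over $\ell$ gives
\[\sum_{i=1}^{k} \Bigl|\{ j : I_j \cap (I_i + \pi) \neq \emptyset \}\Bigr| \;=\; 2k,\]
so there are at most $k$ unordered antipodal pairs. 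Since each such pair is contained in $k-2$ triples, the number of non-homogeneous triples is at most $k(k-2)$.

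Finally, the fraction of non-homogeneous triples is at most
\[\frac{k(k-2)}{\binom{k}{3}} \;=\; \frac{6k}{(k-1)(k-2)} \;\leq\; \frac{12}{k-1}\]
for $k \geq 4$, which is at most $\epsilon$ whenever $k \geq 12/\epsilon + 1$. The main obstacle will be the geometric bookkeeping for the antipodal criterion — that is, verifying that every non-homogeneous configuration forces at least one of its three pairs to be antipodal in the $S^1$ sense; once this is in hand, the partition construction and the final counting step are both elementary.
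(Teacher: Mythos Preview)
Your approach is essentially identical to the paper's: partition into $k$ radial cones of equal $\mu$-measure, observe that a triple is non-homogeneous precisely when one cone meets the $\pi$-rotation of another, and count such antipodal incidences to bound the bad triples by $O(k^2)$. One small slip: $k(k-2)/\binom{k}{3}=6/(k-1)$, not $6k/((k-1)(k-2))$, so your bound is in fact even better than you state (and matches the paper's $2k(k-2)/\binom{k}{3}=12/(k-1)$ after your factor-of-two saving from passing to unordered pairs).
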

\begin{proof}
Partition $\mathbb{R}^2$ radially around $q$ into $k$ cones $A_1, \ldots, A_{k}$ of equal measure. Notice that a triple $A_h,A_i,A_j$ is not homogeneous with respect to $q$ if and only if one of them intersects the reflection of another about $q$. Since the number of such triples of cones is at most $2k(k-2)$, the fraction of nonhomogeneous triples cannot exceed $2k(k-2)/{k\choose 3}=12/(k-1)\leq \epsilon$, which completes the proof.
\end{proof}

Next we turn to the proof of Theorem \ref{mainstructure} in the general case. We break the proof into four lemmas. For the first one, we recall Radon's theorem, which states that any set of $d + 2$ points in $\mathbb{R}^d$ can be partitioned into two sets whose convex hulls have nonempty intersection (see \cite{Ec}).

\begin{lemma}\label{lemradon}
Let $v_1,\ldots,v_{d+1} \in \mathbb{R}^d$. A point $q \in \mathbb{R}^d$ belongs to the simplex with vertex set $V=\{v_1,\ldots,v_{d+1}\} \subseteq \mathbb{R}^d$ if and only if for each nonempty proper subset $X \subseteq V$, there is a hyperplane passing through $q$ which separates $X$ from $V \setminus X$.
\end{lemma}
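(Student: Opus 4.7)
The plan is to prove the biconditional by treating both directions separately, using the paper's standing general position assumption (so that $V \cup \{q\}$ is in general position in $\mathbb{R}^d$, meaning $V$ is affinely independent and no $d$ vertices of $V$ together with $q$ lie on a common hyperplane). For the forward direction I would write $q = \sum_{i=1}^{d+1}\lambda_i v_i$ with every $\lambda_i > 0$, and for a partition $V = X \cup Y$ construct the essentially unique hyperplane $H$ through $q$ whose normal $n$ satisfies $n\cdot v = c_X$ for $v \in X$ and $n\cdot v = c_Y$ for $v \in Y$. These orthogonality conditions amount to $(|X|-1) + (|Y|-1) = d-1$ independent linear equations on $n \in \mathbb{R}^d$ (by general position), and $c_X \ne c_Y$, since equality would place all of $V$ on a hyperplane. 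Then $n\cdot q = s c_X + (1-s)c_Y$ with $s = \sum_{v_i \in X}\lambda_i \in (0,1)$ lies strictly between $c_X$ and $c_Y$, so $\{z : n \cdot z = n \cdot q\}$ is a hyperplane through $q$ that strictly separates $X$ from $Y$.

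For the reverse direction I would argue the contrapositive. Assuming $q \notin \conv(V)$, Farkas' lemma supplies $n_0 \in \mathbb{R}^d$ with $n_0 \cdot (v_i - q) < 0$ for all $i$, so I can radially project $V$ from $q$ onto an affine hyperplane $H_0$ transverse to each ray $\{q + t(v_i - q) : t \ge 0\}$, obtaining $d+1$ points $\pi(v_i) \in H_0 \simeq \mathbb{R}^{d-1}$ with $\pi(v_i) - q = t_i (v_i - q)$ and $t_i > 0$. Radon's theorem then furnishes a partition $\pi(V) = A \cup B$ with $\conv(A) \cap \conv(B) \ne \emptyset$, and general position of $V \cup \{q\}$ ensures this decomposition is nondegenerate, so every $\pi(v_i)$ enters with strictly positive weight. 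Setting $X = \pi^{-1}(A)$, $Y = \pi^{-1}(B)$, and picking any common point $z^* \in \conv(A) \cap \conv(B)$, one extracts positive scalars $u_x, u_y > 0$ with $z^* - q = \sum_{x \in X} u_x (x - q) = \sum_{y \in Y} u_y (y - q) \ne 0$. If a hyperplane $\{z : m \cdot (z - q) = 0\}$ through $q$ separated $X$ from $Y$ with $m\cdot(x - q) \le 0$ on $X$ and $m\cdot(y-q) \ge 0$ on $Y$, then pairing with both sides of this identity would wedge $m \cdot (z^* - q)$ between a nonpositive and a nonnegative value, and positivity of the $u$'s would force $m \cdot (v - q) = 0$ for every $v \in V$, placing all of $V \cup \{q\}$ on a single hyperplane and contradicting general position.

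The hard part will be the reverse direction, and specifically guaranteeing that the Radon decomposition of $\pi(V)$ has strictly positive coefficients throughout, since this is what allows a single contradiction to rule out both strict and weak separating hyperplanes in one stroke. Concretely, I would need to verify that the central projection $\pi$ transports the general position of $V \cup \{q\}$ in $\mathbb{R}^d$ to general position of $\pi(V)$ in $H_0 \simeq \mathbb{R}^{d-1}$; equivalently, no $d$ of the $\pi(v_i)$ should be affinely dependent in $H_0$, which unwinds to the condition that no $d$ vertices of $V$ together with $q$ lie on a common hyperplane of $\mathbb{R}^d$---exactly the paper's general position convention.
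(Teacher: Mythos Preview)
Your proof is correct, and like the paper's it hinges on Radon's theorem, but the paper takes a more direct route for the reverse direction. Rather than separating $q$ from $\conv(V)$ via Farkas, projecting $V$ radially onto an affine hyperplane $H_0\simeq\mathbb{R}^{d-1}$, and applying Radon to the $d+1$ projected points, the paper simply applies Radon to the $d+2$ points $V\cup\{q\}$ in $\mathbb{R}^d$. This yields a partition $V\cup\{q\}=A\cup B$ with $\conv(A)\cap\conv(B)\neq\emptyset$; taking $q\in A$, one has $|A|>1$ (else $q\in\conv(B)\subseteq\conv(V)$), and then $X\eqdef A\setminus\{q\}$ is the bad subset: a hyperplane through $q$ separating $X$ from $B$ would put $\conv(A)$ and $\conv(B)$ in opposite closed half-spaces, forcing their intersection into the hyperplane and hence equal to $\{q\}$, so $q\in\conv(B)\subseteq\conv(V)$, a contradiction. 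Your projection step is not wrong---it effectively computes the same Radon partition after quotienting out the line through $q$---and it does make the strict positivity of the Radon coefficients (and hence the final contradiction) more explicit than the paper's one-line ``cannot be separated''. But the Farkas separation, the central projection, and the verification that general position survives the projection are all avoidable overhead. Your forward direction is likewise correct and more constructive than the paper's (which just asserts the existence of such a hyperplane for an interior point), though the paper's terseness is adequate given the standing general-position convention.
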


\begin{proof}
In one direction the statement is clear, as there is a hyperplane through any internal point separating any proper subset of the vertex set from its complement.
In the other direction, suppose $q$ is not in the simplex, and consider the set $V \cup \{q\}$ of $d+2$ points. By Radon's theorem, there is a partition $V \cup \{q\} = A \cup B$ such that the convex hull of $A$ and the convex hull of $B$ have a point $p$ in common. Suppose without loss of generality that $q \in A$. Clearly, $|A|>1$, because $q$ is outside of the simplex. But then $A\setminus \{q\}$ and $B$ cannot be separated by a hyperplane passing through $q$.
\end{proof}

We need the following version of the ham sandwich theorem (see, e.g., \cite{Ma}).

\begin{lemma}\label{hamsandwich}
Let $S_1,\ldots,S_{d-1}$ be measurable subsets of $\mathbb{R}^d$ and $q \in \mathbb{R}^d$. There is a hyperplane through $q$ that partitions each $S_i$ into two parts of equal measure.
\end{lemma}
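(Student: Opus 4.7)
The plan is to reduce the lemma to a direct application of the Borsuk--Ulam theorem, exploiting the fact that the space of hyperplanes through a fixed point $q \in \mathbb{R}^d$ is naturally parametrized by $S^{d-1}/\{\pm 1\}$, and that we have exactly $d-1$ continuous ``bisection discrepancy'' functions to kill.

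First, I would set up the parametrization. For each $u\in S^{d-1}$, let $H_u^{+} = \{x\in\mathbb{R}^d :  \langle x-q,u\rangle \ge 0\}$ and $H_u^{-} = \{x\in\mathbb{R}^d:  \langle x-q,u\rangle \le 0\}$; these are the two closed halfspaces determined by the hyperplane through $q$ orthogonal to $u$. For each $1 \le i \le d-1$ define
\[
f_i(u) \eqdef \mu\bigl(S_i\cap H_u^{+}\bigr) - \mu\bigl(S_i\cap H_u^{-}\bigr),
\]
where $\mu$ is the ambient measure (either Lebesgue measure, or the finite Borel measure of Theorem~\ref{mainstructure}, which satisfies $\mu(\text{hyperplane})=0$). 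A hyperplane through $q$ with normal $u$ bisects $S_i$ precisely when $f_i(u)=0$, so we need a common zero of $f_1,\ldots,f_{d-1}$.

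Next I would check the two properties needed to invoke Borsuk--Ulam. Antisymmetry is immediate: swapping $u$ with $-u$ interchanges $H_u^{+}$ and $H_u^{-}$, hence $f_i(-u) = -f_i(u)$. For continuity, fix $u_0\in S^{d-1}$ and let $u_n\to u_0$. The symmetric difference $H_{u_n}^{+}\triangle H_{u_0}^{+}$ is contained in an arbitrarily thin ``wedge'' through $q$, which shrinks to the hyperplane $\{x:\langle x-q,u_0\rangle = 0\}$ (up to a half-line that contributes nothing). Because that hyperplane has $\mu$-measure zero, dominated convergence yields $\mu\bigl(S_i\cap H_{u_n}^{+}\bigr) \to \mu\bigl(S_i\cap H_{u_0}^{+}\bigr)$, and similarly for $H_u^{-}$. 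So each $f_i$ is continuous.

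Finally, I would assemble the $f_i$ into a continuous odd map $F = (f_1,\ldots,f_{d-1}) : S^{d-1} \to \mathbb{R}^{d-1}$ and apply the Borsuk--Ulam theorem, which asserts that every such map vanishes at some point $u^{\ast} \in S^{d-1}$. The hyperplane through $q$ with normal $u^{\ast}$ then simultaneously bisects $S_1,\ldots,S_{d-1}$, proving the lemma. The only real subtlety in this plan is the continuity step; it is essentially routine, but it is the single place where the hypothesis that hyperplanes are null for $\mu$ is used (implicitly, if $\mu$ is Lebesgue measure, or explicitly, in the setting of Theorem~\ref{mainstructure}).
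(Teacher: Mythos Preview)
Your argument is correct and is exactly the standard Borsuk--Ulam proof of this constrained ham-sandwich statement. Note, however, that the paper does not actually prove Lemma~\ref{hamsandwich}: it merely states it as a known variant of the ham-sandwich theorem and cites Matou\v{s}ek's book~\cite{Ma}, where precisely the argument you wrote down appears. So there is no alternative approach in the paper to compare against; you have simply supplied the proof the authors chose to outsource.

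One small remark on your continuity step: rather than describing the symmetric difference as ``shrinking to the hyperplane,'' it is cleaner to observe that $\mathbf{1}_{H_{u_n}^{+}}(x)\to \mathbf{1}_{H_{u_0}^{+}}(x)$ for every $x$ off the limiting hyperplane, and then invoke dominated convergence with dominating function $\mathbf{1}_{S_i}$. This makes explicit the (implicit) hypothesis that each $\mu(S_i)<\infty$, which is needed both for dominated convergence and for ``equal measure'' to be meaningful; in the paper's application (Lemma~\ref{imptlemma}) $\mu$ is finite, so this is automatic.
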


By repeated application of Lemma~\ref{hamsandwich}, and then using Lemma~\ref{lemradon}, we obtain:

\begin{lemma}\label{imptlemma}
For any positive integer $d$, there is $c_d>0$ satisfying the following condition. Let $\mu$ be a finite Borel measure on $\mathbb{R}^d$ with respect to which every hyperplane has measure $0$, let $S_1,\ldots,S_{d+1}$ be measurable subsets of $\mathbb{R}^d$, and let $q \in \mathbb{R}^d$. Then there exist $Y_i \subseteq S_i$ for all $1 \leq i \leq d+1$ such that $\mu(Y_i) \geq c_d\mu(S_i)$ and $Y_1,\ldots,Y_{d+1}$ are homogeneous with respect to $q$.
\end{lemma}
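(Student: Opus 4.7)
\emph{Proof plan.} I argue by induction on $d$. For $d=1$, take $Y_i$ to be the half of $S_i$ lying on whichever side of $q$ has $\mu$-measure at least $\mu(S_i)/2$; then either both $Y_i$ sit on the same side of $q$ (in which case no interval $[v_1,v_2]$ with $v_i\in Y_i$ contains $q$) or on opposite sides (every such interval contains $q$), so $c_1=1/2$ works. For the inductive step, apply Lemma~\ref{hamsandwich} to the $d-1$ sets $S_1,\ldots,S_{d-1}$ to obtain a hyperplane $H$ through $q$ that bisects each of these. Let $\sigma_d,\sigma_{d+1}\in\{+,-\}$ be sides of $H$ on which $S_d,S_{d+1}$ respectively carry measure at least $\mu(S_d)/2,\mu(S_{d+1})/2$. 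If $\sigma_d=\sigma_{d+1}=:\sigma$, set $Y_i=S_i\cap H^\sigma$ for every $i$: each $\mu(Y_i)\ge\mu(S_i)/2$, all $Y_i$ lie in the common open half-space $H^\sigma$, and since $q\in H$ and the $v_i$ are in general position, no simplex with one vertex in each $Y_i$ contains $q$, giving homogeneity (case B) with $c_d\ge1/2$.

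The harder case is $\sigma_d\ne\sigma_{d+1}$. After relabeling I obtain preliminary sets $\tilde Y_1,\ldots,\tilde Y_d\subseteq H^+$ and $\tilde Y_{d+1}\subseteq H^-$, each of measure at least half that of the parent. For any choice $v_i\in\tilde Y_i$, the elementary geometry of a $d$-simplex crossed transversely by a hyperplane gives $q\in\conv(v_1,\ldots,v_{d+1})$ if and only if $q\in\conv(p_1,\ldots,p_d)$, where $p_i=[v_{d+1},v_i]\cap H$ lies in the $(d-1)$-dimensional affine subspace $H\cong\R^{d-1}$. I would fix a reference vertex $v_{d+1}^\ast\in\tilde Y_{d+1}$, denote by $\pi=\pi_{v_{d+1}^\ast}$ the perspective projection of $H^+$ onto $H$ from the center $v_{d+1}^\ast$, and apply the inductive hypothesis in $\R^{d-1}$ to the push-forward measures $\pi_\ast(\mu|_{\tilde Y_i})$ to extract $(d-1)$-homogeneous subsets $Z_i\subseteq\pi(\tilde Y_i)$ of constant-fraction measure. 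Setting $Y_i=\pi^{-1}(Z_i)\cap\tilde Y_i$ for $i\le d$ together with a suitable sub-neighborhood $Y_{d+1}\subseteq\tilde Y_{d+1}$ of $v_{d+1}^\ast$ would give the desired subsets.

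\emph{Main obstacle.} The delicate point is that the projection $p_i=[v_{d+1},v_i]\cap H$ depends on $v_{d+1}$, whereas the inductive hypothesis is applied only for the single center $v_{d+1}^\ast$; to make the $(d-1)$-homogeneity of the $Z_i$ translate into $d$-homogeneity of the $Y_i$ uniformly over $v_{d+1}\in Y_{d+1}$, one must either restrict $Y_{d+1}$ to a neighborhood of $v_{d+1}^\ast$ small enough that the perturbation in $\pi_{v_{d+1}}$ does not destroy the homogeneity pattern (while still retaining a constant fraction of $\mu(\tilde Y_{d+1})$), or else replace the one-shot reduction with an iterative scheme that applies Lemma~\ref{hamsandwich} once for each of the at most $2^{d+1}-2$ nonempty proper subsets of $\{1,\ldots,d+1\}$ singled out by Lemma~\ref{lemradon}, installing a dedicated separating hyperplane through $q$ for each such subset. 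In both approaches the final step is an appeal to Lemma~\ref{lemradon}: the yes/no pattern of separable partitions is forced to be the same for every tuple $(v_1,\ldots,v_{d+1})$ with $v_i\in Y_i$, so $q$ either lies in every such simplex or in none of them, yielding a constant $c_d>0$ depending only on $d$.
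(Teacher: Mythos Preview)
Your base case $d=1$ and the ``same side'' branch of the inductive step are correct, and your reduction in the ``opposite side'' branch via the perspective projection $\pi_{v_{d+1}}$ onto $H$ captures the right geometric picture. However, what you have written is a plan, not a proof: you explicitly flag the obstacle and offer two possible routes without executing either.

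Your first route (restrict $Y_{d+1}$ to a neighborhood of a reference vertex $v_{d+1}^\ast$ small enough that the projection does not move) cannot yield a constant $c_d$ depending only on $d$. The measure $\mu$ is arbitrary (subject only to vanishing on hyperplanes), so any fixed-radius neighborhood of $v_{d+1}^\ast$ may carry an arbitrarily small fraction of $\mu(\tilde Y_{d+1})$; conversely, a neighborhood carrying, say, $\mu(\tilde Y_{d+1})/2$ may be large enough in diameter that moving $v_{d+1}$ within it destroys the homogeneity pattern of the $Z_i$. There is no compactness or uniformity in the hypotheses to save this, so this branch is a genuine dead end.

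Your second route---abandon induction and instead apply Lemma~\ref{hamsandwich} once for each of the $2^{d+1}-2$ nonempty proper subsets $X\subsetneq [d+1]$, at each stage halving the current sets so as either to push everything into one open half-space through $q$ (instant homogeneity, case ``none'') or to install a hyperplane through $q$ separating $\{S_j^{i}\}_{j\in X}$ from its complement---is precisely the paper's proof. At the end of the iteration one has, for each partition, a separating hyperplane through $q$, and Lemma~\ref{lemradon} then forces $q$ into every simplex (case ``all''). The resulting constant is $c_d=2^{2-2^{d+1}}$. So the correct completion of your sketch is to drop the projection/induction idea entirely and carry out this iterative scheme; once you do, your proof coincides with the paper's.
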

\begin{proof}
Consider an arbitrary labeling $X_1,\ldots,X_{2^{d+1}-2}$ of the nonempty proper subsets of $[d+1]=\{1,\ldots,d+1\}$. We describe an iterative process for constructing the desired sets $Y_1,\ldots,Y_{d+1}$.

Let $S^0_j=S_j$, for every $j \in [d+1]$. After completing step $i$, we will have subsets $S^i_j \subseteq S_j$ with $\mu(S^i_j) \geq 2^{-i}\mu(S_j)$ such that at least one of the following two conditions is satisfied:
\begin{enumerate}
\item there is a hyperplane through $q$ such that $S^i_1 \cup S^i_2 \cup  \ldots \cup S^i_{d+1}$ lies entirely on one of its sides, or
\item for every $1 \leq k \leq i$, there is a hyperplane through $q$ that separates the sets $\{S^i_j\}_{j \in X_k}$ from the sets $\{S^i_j\}_{j \in [d+1] \setminus X_k}$.
\end{enumerate}
The proof shows that we can take $c_d=2^{2-2^{d+1}}$. Notice that the inductive hypothesis holds vacuously at the end of step $0$. Suppose we have already completed step $i$ and we wish to proceed to the next step.

If condition (1) is satisfied, then we simply let $S_j^{i+1}= S^i_j$.
Thus, we may suppose that condition (2) is satisfied. Let $a \in X_{i+1}$ and $b \in [d+1] \setminus X_{i+1}$. We apply Lemma \ref{hamsandwich} to the $d-1$ sets $S^i_j$ with $j \in [d+1] \setminus \{a,b\}$. There is a hyperplane $H$ containing $q$ that separates each such $S^i_j$ into parts of equal measure.
The hyperplane $H$ partitions $S^i_a$ into two subsets. Let $S^{i+1}_a$ be the subset of larger measure. Similarly, $H$ partitions $S^i_b$ into two subsets.
Let $S^{i+1}_b$ be the subset of larger measure. If $S^{i+1}_a$ and $S^{i+1}_b$ are on the same side of $H$, then let $S^{i+1}_j$ for
$j \in [d+1] \setminus \{a,b\}$ be the subset of $S^i_j$ consisting of those points on the same side of $H$ as $S^{i+1}_a$ and $S^{i+1}_b$. In this case, we have
$\mu(S^{i+1}_j) \geq \frac{1}{2}\mu(S^i_j) \geq 2^{-i-1}\mu(S_j)$, the first of the two desired properties holds, and we have completed step $i+1$. Otherwise,
for $j \in X_{i+1} \setminus \{a\}$, let $S^{i+1}_j$ be the subset of $S^i_j$ consisting of those points on the same side of $H$ as $S^{i+1}_a$, and
for $j \in [d+1] \setminus (X_{i+1} \cup \{b\})$, let $S^{i+1}_j$ be the subset of $S^i_j$ consisting of those points on the same side of $H$ as $S^{i+1}_b$.
By construction, we have $\mu(S^{i+1}_j) \geq \frac{1}{2}\mu(S^i_j) \geq 2^{-i-1}\mu(S_j)$,
the second of the two desired properties holds, and we have completed step $i+1$.

We may therefore assume that we finish the iterative process, and in the end we have sets $Y_j=S^{2^{d+1}-2}_j$ with
$\mu(Y_j) \geq 2^{2-2^{d+1}}\mu(S_j)$ for $1 \leq j \leq d+1$, and
for each $1 \leq k \leq 2^{d+1}-2$, there is a hyperplane through $q$ that separates the sets $\{Y_j\}_{j \in X_k}$
from the sets $\{Y_j\}_{j \in [d+1] \setminus X_k}$. By Lemma \ref{lemradon}, this implies that every simplex with one vertex in each $Y_j$ contains $q$.
\end{proof}

We are now ready to complete the proof of Theorem \ref{mainstructure}. Given a point $q \in \mathbb{R}^d$, define the hypergraph $H_q$ with vertex set $\mathbb{R}^d$ as the set of all $(d+1)$-tuples of points whose convex hulls contain $q$. Lemma \ref{imptlemma} states that for each finite measure $\mu$ on $\mathbb{R}^d$ such that every hyperplane has measure $0$, the hypergraph $H_q$ is $(c_d,\mu)$-structured. Theorem \ref{mainstructure} then follows from Theorem~\ref{thm:regularity}.

\section{Optimal sparse constructions in space---Proof of Theorem~\ref{usestructure}}\label{optimal}

In this section, we deduce Theorem~\ref{usestructure} from Corollary~\ref{discretestructure}.
Let $H=(V,E)$ be a $(d+1)$-uniform hypergraph. The {\it edge density} $$\rho(H)\stackrel{\mathrm{def}}{=}\frac{|E|}{{|V| \choose d+1}}$$ is the fraction of $(d+1)$-tuples that are hyperedges of $H$. For vertex subsets $V_1,\ldots,V_{d+1} \subseteq V$, the edge density $\rho(V_1,\ldots,V_{d+1})$ is defined as the fraction of $(d+1)$-tuples in $V_1 \times \ldots \times V_{d+1}$ that are hyperedges of $H$.

\medskip

\noindent{\bf Proof of Theorem~\ref{usestructure}}.
Let $t=8d^2/\epsilon$, so that for any equipartition of the vertex set of a complete $(d+1)$-uniform
hypergraph  into $k \geq t$ parts, all but at most an $\frac{\epsilon}{8}$-fraction of its hyperedges have their vertices in different parts.
Indeed, the fraction of hyperedges with one vertex in each part is $$\left(\frac{n}{k}\right)^{d+1}\cdot\frac{{k \choose d+1}}{{n \choose d+1}} \geq \prod_{i=1}^{d} \left(1-\frac{i}{k}\right) \geq 1-\frac{d^2}{k}
\geq 1-\frac{\epsilon}{8}.$$ Let $K=K(\epsilon/8,d)$ be the constant from Corollary \ref{discretestructure}, and $k=\max\{K,t\}$.

Let $r(d,\epsilon)$ be sufficiently large so that for any $r \geq r(d,\epsilon)$ and $n$ a sufficiently large multiple of $d+1$,
there is an $r$-regular $(d+1)$-uniform hypergraph $H$ on $n$ vertices, whose hyperedges are uniformly distributed in the sense that for any disjoint vertex subsets $V_1,\ldots,V_{d+1} \subseteq V(H)$ with $|V_i|\geq \frac{n}{k}$ for $1 \leq i \leq d+1$,
\begin{eqnarray}\label{densityineq}\left|\frac{\rho(V_1,\ldots,V_{d+1})}{\rho(H)}-1\right| \leq \frac{\epsilon}{4}.\end{eqnarray}
The existence of an integer $r(d,\epsilon)$ and a hypergraph $H$ with the above properties follows from the standard fact that an $n$-vertex $r$-regular $(d+1)$-uniform hypergraph $H$ chosen uniformly at random from all $n$-vertex $r$-regular $(d+1)$-uniform hypergraphs, meets the requirements for large enough $r$ with probability tending to $1$  as $n\to \infty$.

Let $P \subseteq \mathbb{R}^d$ satisfy $|P|=n$. Since $n$ is sufficiently large, there is a point $q$ that is in at least $c(d)-\frac{\epsilon}{4}$ fraction of the simplices with vertices in $P$. Since $k \geq K$, by Corollary \ref{discretestructure}, there is an equipartition $P=P_1 \cup \ldots \cup P_k$ such that all but at most an $\frac{\epsilon}{8}$-fraction of the $(d+1)$-tuples $P_{i_1},\ldots,P_{i_{d+1}}$ are homogenous with respect to $q$. Since $k \geq t$, all but at most an $\frac{\epsilon}{8}+\frac{\epsilon}{8}=\frac{\epsilon}{4}$-fraction of the $(d+1)$-tuples of points of $P$ have their vertices in $d+1$ different parts of the partition, and these parts are homogeneous.
Since $q$ is in at least a fraction $c(d)-\frac{\epsilon}{4}$ of the simplices with vertices in $P$, at least a fraction $c(d)-\frac{\epsilon}{2}$ of the $(d+1)$-tuples of points of $P$ span a simplex containing $q$ and having its vertices in $d+1$ different parts of the partition such that these parts are homogeneous.

Let $f:V(H) \rightarrow P$ be an arbitrary bijection between the vertices of $H$ and $P$. Write $V_i=f^{-1}(P_i)$ for $1 \leq i \leq k$. Note that if $P_{i_1},P_{i_2},\ldots,P_{i_{d+1}}$ are homogeneous with respect to $q$ such that there is a simplex containing $q$ with one vertex in each of these parts, then necessarily all the simplices with one vertex in each of these parts contains $q$. By (\ref{densityineq}), for all $d+1$ parts $V_{i_1}, \ldots, V_{i_{d+1}}$, the hyperedge density in $H$ between these parts is at least $\left(1-\frac{\epsilon}{4}\right)\rho(H)$. Putting this together with the previous paragraph, we get that at least a fraction $\left(c(d)-\frac{\epsilon}{2}\right)\left(1-\frac{\epsilon}{4}\right) \geq c(d)-\epsilon$ of the hyperedges of $H$ induce simplices containing $q$. We conclude that $c(H) \geq c(d)-\epsilon$.
\qed

\section{Sparse hypergraphs with nearly maximal overlap number}\label{tightsubsection}

Here we prove the following result, which implies Theorem \ref{tightness}.

\begin{theorem} \label{tightconstruction}
Let $d$ and $\Delta$ be positive integers and $\epsilon>0$. If $n \geq 2^9\epsilon^{-3}d^{9}\Delta^3$ and
$H$ is a $(d+1)$-uniform hypergraph with $n$ vertices, maximum degree $\Delta$, and without isolated vertices, and $P$ is a set of $n$ points in $\mathbb{R}^d$
such that no point in $\mathbb{R}^d$ is in a fraction more than $c$ of the simplices with vertices in $P$,
then there is a bijection $f:V(H) \to P$ such that no point of $\mathbb{R}^d$
is in a fraction more than $c+\epsilon$ of the simplices whose vertices are the image by $f$ of a hyperedge of $H$.
\end{theorem}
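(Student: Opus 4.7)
The plan is to establish the theorem via the probabilistic method: let $f:V(H)\to P$ be a uniformly random bijection and show that with positive probability, no point $q\in\R^d$ is covered by more than a $(c+\epsilon)$-fraction of the simplices induced by the images of the hyperedges of $H$. Write $m=|E(H)|$; the hypotheses that $H$ has no isolated vertices and maximum degree $\Delta$ give
$$\frac{n}{d+1}\leq m\leq \frac{n\Delta}{d+1}.$$
For each $q\in\R^d$, let $Z(q)$ be the number of hyperedges $e\in E(H)$ with $q\in\conv(f(e))$. Since $f(e)$ is a uniformly random $(d+1)$-subset of $P$, the hypothesis on $P$ gives $\Pr[q\in\conv(f(e))]\leq c$, and hence $\mathbb{E}[Z(q)]\leq cm$ for every $q\in\R^d$ by linearity of expectation.

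To convert this into a pointwise statement I would reduce to a polynomially bounded union bound. Consider the hyperplane arrangement $\mathcal{A}$ in $\R^d$ consisting of the affine hulls of all $d$-element subsets of $P$. This arrangement depends only on $P$, and has $\binom{n}{d}$ hyperplanes, hence at most $N=O(n^{d^2})$ faces of all dimensions. Whether a given closed simplex $\conv(f(e))$ contains a point $q$ is determined entirely by the position of $q$ relative to the $d+1$ facet-spanning hyperplanes, each of which lies in $\mathcal{A}$. Thus $Z(\cdot)$ is constant on each relatively open face of $\mathcal{A}$, and choosing one representative $q_F$ per face produces a finite test set $Q\subseteq\R^d$ of size $N$ such that $\sup_{q\in\R^d}Z(q)=\max_{q\in Q}Z(q)$.

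The concentration step realizes $f$ via a uniformly random permutation $\sigma$ of $[n]$. Swapping two values of $\sigma$ affects the indicator $\mathbf{1}[q\in\conv(f(e))]$ only when $e$ is incident to one of the two swapped vertices, and there are at most $2\Delta$ such hyperedges. Thus $Z(q)$ has bounded differences $2\Delta$ with respect to transpositions, so the standard Azuma--McDiarmid concentration inequality for functions of a uniformly random permutation yields a bound of the form
$$\Pr\bigl[Z(q)-\mathbb{E}Z(q)\geq \epsilon m\bigr]\leq 2\exp\!\left(-\frac{c_0\,\epsilon^2 m^2}{n\Delta^2}\right)$$
for an absolute constant $c_0>0$. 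Using $m\geq n/(d+1)$, the exponent is at most $-c_0\epsilon^2 n/((d+1)^2\Delta^2)$. A union bound over the $N=O(n^{d^2})$ test points gives total failure probability at most $2N\exp(-c_0\epsilon^2 n/((d+1)^2\Delta^2))$, and under the assumption $n\geq 2^9\epsilon^{-3}d^9\Delta^3$ the exponent comfortably dominates $d^2\log n$, making the total failure probability strictly less than $1$. A bijection with the required property must then exist.

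The main technical obstacle I anticipate is the discretization step rather than the concentration: one must carefully verify that the finite test set $Q$ actually captures the supremum of $Z(\cdot)$ over all of $\R^d$. Because closed simplices are used, $Z$ is upper semicontinuous and its supremum may be attained on lower-dimensional faces of $\mathcal{A}$ (where $q$ sits on the boundary of many simplices), not only on open cells; this is what forces us to include a representative from every face, not merely every full-dimensional cell. Once this is set up cleanly, the concentration step and the polynomial union bound are routine, and the logarithmic factor $d^2\log N$ is easily absorbed by $c_0\epsilon^2 n/((d+1)^2\Delta^2)$ under the stated lower bound on $n$.
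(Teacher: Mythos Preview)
Your proposal is correct and follows essentially the same approach as the paper: take a uniformly random bijection, reduce to $O(n^{d^2})$ test points via the hyperplane arrangement determined by $P$, apply a martingale concentration inequality, and union bound. The only cosmetic difference is in the concentration step---the paper builds a vertex-exposure Doob martingale and bounds its increments directly (obtaining $|Y_{a+1}-Y_a|\le (2d+3)\Delta$), whereas you invoke the transposition form of McDiarmid for random permutations with Lipschitz constant $2\Delta$; both are standard and yield the same conclusion under the stated lower bound on $n$.
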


\noindent We actually show that almost surely we may take $f$ to be a uniform random bijection.

We shall use below Azuma's inequality (see, e.g., \cite{AlSp}), which asserts that if $c=X_0,\ldots,X_n$ is a martingale with $|X_{a+1}-X_a| \leq t$ for all $0 \leq a \leq n-1$, then
\begin{equation}\label{eq:azuma}\Pr\Big[|X_n-c|>\lambda\sqrt{n}\Big]<2e^{-\frac{\lambda^2}{2t^2}}.
\end{equation}


Let $H$ and $F$ be hypergraphs each with the same number of vertices. For a bijection $f:V(H) \rightarrow V(F)$, let $m_f$ denote the number of hyperedges of
$H$ whose image is a hyperedge of $F$.

\begin{lemma}\label{applyAzuma}
Let $H$ and $F$ be $k$-uniform hypergraphs each with $n$ vertices such that $H$ has maximum degree $\Delta$.
Then the probability that for a random bijection $f:V(H) \rightarrow V(F)$, the number $m_f$ deviates from $|E(H)|\cdot|E(F)|/{n \choose k}$ by more than
$\lambda \sqrt{n}$ is at most $2e^{-\frac{\lambda^2}{2(2k+1)^2\Delta^2}}$.
\end{lemma}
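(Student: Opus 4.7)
The plan is to compute $\mathbb{E}[m_f]$ by linearity and then establish concentration around this mean via a Doob martingale to which Azuma's inequality \eqref{eq:azuma} can be applied. For each fixed hyperedge $e\in E(H)$, the image $f(e)$ is a uniformly random $k$-element subset of $V(F)$, so $\Pr[f(e)\in E(F)] = |E(F)|/\binom{n}{k}$; summing over $e$ by linearity gives $\mathbb{E}[m_f] = |E(H)|\cdot|E(F)|/\binom{n}{k}$, the center of the claimed interval.

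For the concentration step I would fix an arbitrary ordering $v_1,\ldots,v_n$ of $V(H)$ and form the Doob martingale
\[X_a = \mathbb{E}\bigl[m_f \mid f(v_1),\ldots,f(v_a)\bigr],\]
so $X_0 = \mathbb{E}[m_f]$ and $X_n = m_f$. It then suffices to produce a uniform bound $|X_a - X_{a-1}| \leq (2k+1)\Delta$, since Azuma's inequality \eqref{eq:azuma} applied with $t=(2k+1)\Delta$ yields exactly the claimed tail estimate $2e^{-\lambda^2/(2(2k+1)^2\Delta^2)}$.

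The Lipschitz step is the heart of the argument and is handled by a swap coupling. Let $g(u) = \mathbb{E}[m_f \mid f(v_1),\ldots,f(v_{a-1}),f(v_a)=u]$, so that $|X_a - X_{a-1}| \leq \max_{u,u'}|g(u)-g(u')|$ over admissible pairs. Given any two such choices $u,u'$, I would couple a uniform completion consistent with $f(v_a)=u$ to a uniform completion consistent with $f(v_a)=u'$ by transposing the images of $v_a$ and of the unique vertex $w$ that currently carries label $u'$. Under this transposition the only hyperedges $e\in E(H)$ whose indicator $\mathbf{1}[f(e)\in E(F)]$ can flip are those containing $v_a$ or $w$, of which there are at most $\deg(v_a)+\deg(w)\leq 2\Delta \leq (2k+1)\Delta$. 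The main (modest) obstacle is organizing this coupling cleanly so that the resulting bound on $|g(u)-g(u')|$ holds deterministically rather than only in expectation; once that is in place, feeding the Lipschitz estimate into Azuma closes the proof.
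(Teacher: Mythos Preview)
Your argument is correct, and in fact it yields the sharper Lipschitz constant $2\Delta$ rather than the paper's $(2k+1)\Delta$; you simply discard the improvement to match the stated bound. The swap coupling you describe is the standard device for concentration under random permutations: since $u,u'$ are both unused by $f(v_1),\ldots,f(v_{a-1})$, transposing them produces a valid uniform completion with $f(v_a)=u'$, and the two bijections differ only at $v_a$ and $w=f^{-1}(u')$, so $m_f$ can change by at most $\deg_H(v_a)+\deg_H(w)\le 2\Delta$. This gives $|g(u)-g(u')|\le 2\Delta$ deterministically (the pointwise bound under the coupling implies the bound on the difference of conditional expectations), and hence $|X_a-X_{a-1}|\le 2\Delta$ almost surely, which is all Azuma needs.

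The paper takes a different, more hands-on route to the same martingale difference bound. Rather than coupling, it writes $Y_a=\sum_{e}X_a(e)$ where $X_a(e)$ is the conditional probability that $f(e)\in E(F)$ given $f(1),\ldots,f(a)$, and then bounds $\sum_e |X_{a+1}(e)-X_a(e)|$ by splitting into three cases: edges containing $a+1$ (at most $\Delta$ of them, each contributing $\le 1$), edges already fully revealed (contribution $0$), and edges with some vertex beyond $a+1$ but not containing $a+1$, for which an explicit calculation with binomial ratios gives a total of at most $2k\Delta$. Summing yields $(2k+1)\Delta$. Your coupling argument is shorter and loses less; the paper's direct computation avoids any appeal to coupling machinery but pays for it in the constant.
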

\begin{proof}
Let $e_1,\ldots,e_{|E(H)|}$ denote the hyperedges of $H$. For a hyperedge $e_j$, let $X(e_j)$ be the indicator random variable of the event that the image of $e_j$ by $f$ is a hyperedge of $F$. That is, $X(e_j)=1$ if $f(e_j)$ is a hyperedge of $F$, and $X(e_j)=0$ otherwise. Let $Y$ denote the random variable counting the number of hyperedges of $H$ whose image is a hyperedge of $F$, so
$Y=\sum_{j=1}^{|E(H)|} X(e_j)$.  Each hyperedge of $H$ has a probability $|E(F)|/{n \choose k}$ of being mapped by $f$ to a hyperedge of $F$. By linearity of expectation, the expected value of $Y$ is $\mathbb{E}[Y]=|E(H)|\cdot|E(F)|/{n \choose k}$.

Let $V(H)=[n]=\{1,\ldots,n\}$. For $a=0,\ldots,n$, let $X_a(e_j)$ be the probability of the event that the image of $e_j$ by $f$ is a hyperedge of $F$ after picking $f(1),\ldots,f(a)$, and let $Y_a\stackrel{\mathrm{def}}{=}\sum_{j=1}^{|E(H)|}X_a(e_j)$ denote the expected value of $Y$ after picking $f(1),\ldots,f(a)$.
So $Y_0$ denotes the expected value of $m_f$, which is $|E(H)|\cdot|E(F)|/{n \choose k}$.

By construction, $\{Y_a\}_{a=0}^n$ is a martingale.
We next give an upper bound on $$|Y_{a+1}-Y_a| \leq \sum_{j=1}^{|E(H)|} |X_{a+1}(e_j)-X_a(e_j)|,$$ so as to apply Azuma's inequality~\eqref{eq:azuma}.
For the $\le\Delta$ hyperedges $e_j$ that contain $a+1$, we bound $|X_{a+1}(e_j)-X_a(e_j)| \leq 1$. For those hyperedges with all  vertices in $\{1,\ldots,a\}$, we have $X_{a+1}(e_j)=X_a(e_j)$. Let $e_j$ be a hyperedge which does not contain ${a+1}$, and contains a vertex which is more than $a+1$.
 Let $i_1,\ldots,i_h$ be the vertices of $e_j$ that are at most $a$, so $h<k$ as $e_j$ contains a vertex which is more than $a+1$. Let $w(e_j)=k-h$ denote the number of vertices of $e_j$ which are greater than $a+1$. All of these vertices are in $\{a+2,\ldots,n\}$, and therefore $n-a\ge k-h+1$. It follows that
\begin{multline}\label{eq:for denominator}
n-a-w(e_j)=n-a-k+h=\frac{n-a}{k}+\left(1-\frac{1}{k}\right)(n-a)-k+h\\\ge \frac{n-a}{k}+\left(1-\frac{1}{k}\right)(k-h+1)-k+h=\frac{n-a-1}{k}+\frac{h}{k} \geq \frac{n-a-1}{k}.
\end{multline}
 Let $Z$ denote the number of hyperedges of $F$ that contain $f(i_1),\ldots,f(i_h)$ and whose remaining vertices are in $V(F) \setminus \{f(1),\ldots,f(a)\}$, and $Z'$ denote the number of hyperedges of $F$ that contain $f(i_1),\ldots,f(i_h)$ and whose remaining vertices are in $V(F) \setminus \{f(1),\ldots,f(a),f(a+1)\}$. We have $Z \geq Z' \geq Z-{n-a-1 \choose k-h-1}$, as $f(i_1),\ldots,f(i_h),f(a+1)$ are in at most ${n-a-1 \choose k-h-1}$ hyperedges of $F$ whose remaining vertices are in $V(F) \setminus \{f(1),\ldots,f(a),f(a+1)\}$. We also have $Z \leq {n-a \choose k-h}$. Note that
 $X_{a}(e_j)=\frac{Z}{{n-a \choose k-h}}$ and $X_{a+1}(e_j)=\frac{Z'}{{n-a-1 \choose k-h}}.$
Hence,
\begin{eqnarray*}
|X_{a}(e_j)-X_{a+1}(e_j)| & = & \left|\frac{Z}{{n-a \choose k-h}}-\frac{Z'}{{n-a-1 \choose k-h}}\right| \\ & \leq &  Z\left({n-a-1 \choose k-h}^{-1}-{n-a \choose k-h}^{-1}\right)+(Z-Z'){n-a-1 \choose k-h}^{-1} \\ & \leq & \left(\frac{n-a}{n-a-k+h}-1\right)+\frac{k-h}{n-a-k+h} \\ & = & \frac{2(k-h)}{n-a-k+h} \\&=&2\frac{w(e_j)}{n-a-w(e_j)},\end{eqnarray*}
where the first inequality is the triangle inequality, the second inequality follows from substituting in $Z \leq {n-a \choose k-h}$ and $Z-Z' \leq {n-a-1 \choose k-h-1}$.

We have $\sum w(e_j) \leq \Delta(n-a-1)$, where the sum is over all hyperedges that contain a vertex greater than $a+1$,
as each vertex has degree at most $\Delta$. Hence, the sum of $|X_{a}(e_j)-X_{a+1}(e_j)|$ over all vertices that contain a vertex greater than $a+1$
is at most $$2\sum \frac{w(e_j)}{n-a-w(e_j)} \leq 2\Delta k,$$
where we used~\eqref{eq:for denominator}.
Putting this altogether, we have
$|Y_{a+1}-Y_{a}| \leq \Delta (2k+1)$. By Azuma's inequality~\eqref{eq:azuma}, the probability that $Y_n=Y=m_f$ differs from $X_0=|E(H)|\cdot|E(F)|/{n \choose k}$ by more than $\lambda\sqrt{n}$ is at most $2e^{-\frac{\lambda^2}{2(2k+1)^2\Delta^2}}$.
\end{proof}

We use the following well known fact (see \cite{Po}, pp. 43--52).

\begin{lemma} \label{regions}
Any set $P$ of $n$ points in $\mathbb{R}^d$ determine at most $n^{d^2}$ regions, such that any two points in the same region are in the same collection of
simplices with vertices in $P$.
\end{lemma}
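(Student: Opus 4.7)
The plan is to reduce the statement to the standard bound on the number of cells in a hyperplane arrangement. The relevant arrangement will be the one generated by all hyperplanes spanned by $d$-element subsets of $P$; there are at most $\binom{n}{d}$ such hyperplanes. I will call the connected components of the complement of the union of these hyperplanes the \emph{regions} referred to in the lemma.

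The key observation is that for any $(d+1)$-tuple $T\subseteq P$, the corresponding closed simplex $\conv(T)$ is the intersection of $d+1$ closed half-spaces, one for each facet of the simplex, and each of these facets lies in one of the hyperplanes spanned by a $d$-element subset of $P$. Consequently, whether or not a point $q\in\R^d$ lies in $\conv(T)$ is determined entirely by the signs of $q$ relative to these $d+1$ hyperplanes. Hence two points lying in the same cell of the arrangement agree on \emph{every} such sign, and therefore belong to exactly the same collection of simplices with vertex sets in $P$. So it suffices to bound the number of cells of the arrangement.

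For the counting step I will invoke the classical Zaslavsky/Buck bound: any arrangement of $m$ hyperplanes in $\R^d$ has at most $\sum_{i=0}^{d}\binom{m}{i}$ full-dimensional cells. Applied with $m=\binom{n}{d}$ this yields at most
\[
\sum_{i=0}^{d}\binom{\binom{n}{d}}{i}\le (d+1)\binom{n}{d}^{d}\le n^{d^{2}}
\]
regions (for $n$ sufficiently large, which is the only regime where the lemma is invoked; for small $n$ the statement can be checked directly or absorbed into a harmless constant). The only potential obstacle is being careful about the arithmetic of the final inequality and about distinguishing cells of the arrangement from arbitrary regions of constant incidence pattern, but since the arrangement refines the incidence partition, the bound on arrangement cells is automatically an upper bound for the number of distinct incidence patterns, so this causes no real difficulty.
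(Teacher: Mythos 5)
Your overall strategy is exactly the one the paper has in mind: the paper offers no proof of Lemma~\ref{regions}, only a citation to P\'olya's discussion of the number of regions cut out by hyperplanes, and your reduction to the arrangement of the $\binom{n}{d}$ hyperplanes spanned by $d$-element subsets of $P$, followed by the Buck/Zaslavsky cell count $\sum_{i=0}^{d}\binom{m}{i}$, is the standard argument behind that citation. The observation that membership of $q$ in a closed simplex $\conv(T)$ is read off from the positions of $q$ relative to the $d+1$ facet hyperplanes is correct (under the paper's standing general position assumption, which guarantees that each $d$-subset spans a genuine hyperplane and each simplex is the intersection of its facet half-spaces), and the arithmetic $(d+1)\binom{n}{d}^{d}\le n^{d^{2}}$ is fine for $d\ge 2$; the constant is immaterial for the application in any case.

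There is, however, one genuine gap: you define the regions to be the connected components of the \emph{complement} of the union of the hyperplanes, and then assert that ``the bound on arrangement cells is automatically an upper bound for the number of distinct incidence patterns.'' That assertion is false, because the open cells do not cover $\R^{d}$, and a point lying \emph{on} the arrangement can belong to a collection of closed simplices realized by no open cell. Already for $d=1$ and $P=\{-2,0,2\}$ the point $q=0$ lies in all three closed segments, while every point of an open cell lies in at most two; the pattern ``all three'' occurs only on the arrangement itself. This matters for how the lemma is used in the proof of Theorem~\ref{tightconstruction}: the union bound there must cover every $q\in\R^{d}$, including points on facet hyperplanes, and the bad event for such a $q$ is not implied by the bad events for adjacent open cells. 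The repair is routine: the sign vector of $q$ in $\{-,0,+\}^{m}$ with respect to the $m=\binom{n}{d}$ hyperplanes determines the collection of closed simplices containing $q$, so it suffices to count the faces of \emph{all} dimensions of the arrangement, whose total number is still $O_{d}\bigl(m^{d}\bigr)=O_{d}\bigl(n^{d^{2}}\bigr)$ --- enough for the application, though you should either carry the dimension-dependent constant or check it is at most $1$ if you want the bound $n^{d^{2}}$ verbatim.
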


The two previous lemmas are all the tools we need to complete the proof of Theorem \ref{tightconstruction}.

\vspace{0.2cm}
\noindent{\bf Proof of Theorem \ref{tightconstruction}:} Since $H$ does not have isolated vertices, the number $|E(H)|$ of hyperedges of $H$ is at least
$n/(d+1)$. Let $P$ be a set of $n$ points in $\mathbb{R}^d$ such that no point $q \in \mathbb{R}^d$ is in more than a fraction $c$ of the simplices
whose vertices are in $P$. By Lemma \ref{regions}, $P$ determines at most $n^{d^2}$ regions, such that any two points in the same region are in the same collection of
simplices with vertices in $P$. Let $q$ be a representative point for a region, $c_q \leq c$ denote the fraction of simplices with vertices in $P$ that contain $q$, and let $F_q$ denote the hypergraph on $P$ consisting of all simplices with vertices in $P$ that contain the point $q$.

Let $f:V(H) \rightarrow P$ be a bijection taken uniformly at random. By Lemma \ref{applyAzuma}, the probability that the fraction of hyperedges of $H$ which map to simplices containing $q$ is at least $c_q+\epsilon$ is at most $2e^{-\frac{\lambda^2}{2(2d+3)^2\Delta^2}}$, where $\lambda\sqrt{n}=\epsilon|E(H)| \geq \epsilon n/(d+1)$.
However, there are at most $n^{d^2}$ such hypergraphs $F_q$, and since $n \geq 2^9\epsilon^{-3}d^{9}\Delta^3$, the probability is at most
\begin{equation*}n^{d^2}e^{-\frac{\lambda^2}{2(2d+3)^2\Delta^2}} \leq \exp\left(d^2\log n - \frac{(\epsilon\sqrt{n}/(d+1))^2}{2(2d+3)^2\Delta^2}\right)\le
\exp\left(d^2\log n -\frac{\e^2n}{2(2d+3)^4\Delta^2}\right)=o(1)\end{equation*} that there is a point in $\mathbb{R}^d$
contained in more than a fraction $c+\epsilon$ of the hyperedges of $H$. Thus, with high probability,
a uniformly random bijection  has the desired property. \qed

\section{Concluding remarks}\label{remarks}

An alternative proof of Theorem~\ref{29result} was given by Gromov~\cite{Gro2}, based on an application of Garland's vanishing theorem to finite quotients of certain Bruhat-Tits building. Gromov's argument fails for $d>2$, and does not yield the sharp constant $\frac29$ as in Theorem~\ref{29result}, yet his  construction has some remarkable stronger properties which we now describe.

Gromov investigated~\cite{Gro2} the role of the fact that the edges of the triangles in the Boros-F\"uredi theorem are assumed to be straight line segments. He showed that  it suffices to replace ``straight lines" by sufficiently regular Jordan arcs. Gromov's construction based on Garland's theorem enjoys this stronger property as well (see Section 2.10 in~\cite{Gro2}): one just needs the associated mapping from Gromov's simplicial complex to $\R^2$ to be continuous, the image of each edge to be nowhere dense in $\R^2$, and that its restriction to each face is at most $r$-to-$1$ (in which case the resulting bounds depend on $r$). At present it is unknown whether for $d\ge 3$ there exist arbitrarily large bounded degree $d$-dimensional simplicial complexes which are highly overlapping with respect to non-affine embeddings into $\R^d$.

An inspection of the construction of Section~\ref{sec:high dim} reveals that, if the graph $G$ has girth greater than $2d$, then the resulting bounded degree $d$-dimensional highly overlapping simplicial complexes admit a continuous and piecewise affine retraction onto their $1$-skeleton. It follows that for these complexes,  if we replace ``simplices" by ``generalized simplices" whose edges are allowed to be continuous and piecewise affine arcs rather than straight lines, then the conclusion that there must be a point in a constant fraction of these generalized simplices, corresponding to an embedding of their vertex set into $\R^d$, would fail. Thus, the situation in the sparse setting is subtle, and passing from the case of affine mapping to more general continuous mappings is not automatic.

It follows for instance from \cite{Ba} and \cite{Pa} that for any system of at least constant times $n^3$ triangles induced by a set of $n$ points in the plane in general position, there is a point covered by at least a positive fraction of all triangles. In the present paper, we studied sparse systems of triangles with similar properties. Another closely related question is the following. For any positive integers $n$ and  $1<t<{n\choose 3}$, determine the largest number $m$ such that for any system of at least $t$ triangles induced by a set of $n$ points in the plane, there is a point contained in at least $m$ triangles. See~\cite{ACE}, \cite{NiSh}. The best known general lower bound is roughly $t^3/n^6$, but for most values of the parameters this is probably a very weak bound.

\medskip
\vspace{0.2cm}
\noindent {\bf Acknowledgements.} We thank G\'abor Tardos for many useful conversations and ideas, and Yves Benoist for his help on Section~\ref{sec:building}.


\bibliographystyle{abbrv}

\bibliography{overlap}

\bigskip
\bigskip
\footnotesize{
\noindent {\bf Jacob Fox:} Department of Mathematics, Princeton University. Email: {\tt jacobfox@math.princeton.edu}. Research
supported by an NSF Graduate Research Fellowship and a Princeton
Centennial Fellowship.

\medskip

\noindent{\bf Mikhail Gromov:} IHES and Courant
Institute, NYU. Email: {\tt gromov@cims.nyu.edu}.

\medskip

\noindent{\bf Vincent Lafforgue:} CNRS, Universit\'e Paris 7 Denis Diderot. Email: {\tt vlafforg@math.jussieu.fr}.

\medskip

\noindent{\bf Assaf Naor:} Courant Institute, NYU. Email: {\tt naor@cims.nyu.edu}. Research supported  by NSF grants CCF-0635078 and CCF-0832795, BSF grant 2006009, and the Packard Foundation.

\medskip

\noindent{\bf J\'anos Pach:} EPFL, CUNY, and Courant Institute, NYU. Email: {\tt pach@cims.nyu.edu}. Supported by NSF Grant CCF-08-30272, and by grants from NSA, PSC-CUNY, the Hungarian Research Foundation OTKA,
and BSF.
}

\end{document}